\newtheorem{thmx}{Theorem}
\newtheorem{thm}{Theorem}[section]
\newtheorem{lem}[thm]{Lemma}
\newtheorem{prop}[thm]{Proposition}
\newtheorem{cor}[thm]{Corollary}
\theoremstyle{definition}
\newtheorem{defn}[thm]{Definition}
\newtheorem{ex}[thm]{Example}
\DeclareMathOperator{\R}{\mathbb R}
\DeclareMathOperator{\C}{\mathbb C}
\DeclareMathOperator{\N}{\mathbb N}
\DeclareMathOperator{\Zi}{\mathcal Z}
\DeclareMathOperator{\V}{\mathcal V}
\DeclareMathOperator{\I}{\mathcal I}
\DeclareMathOperator{\K}{\mathcal K}
\DeclareMathOperator{\QQ}{\mathbb Q}
\DeclareMathOperator{\supp}{supp}
\DeclareMathOperator{\Sing}{Sing}
\DeclareMathOperator{\Cent}{Cent}
\DeclareMathOperator{\RCent}{C-Spec}
\DeclareMathOperator{\RSp}{R-Spec}
\DeclareMathOperator{\sta}{s}
\DeclareMathOperator{\Reg}{Reg}
\DeclareMathOperator{\Sp}{Spec}
\def \S {{\mathcal S}}
\DeclareMathOperator{\p}{\mathfrak{p}}
\DeclareMathOperator{\q}{{\mathfrak q}}
\DeclareMathOperator{\Max}{\rm Max}
\DeclareMathOperator{\ReMax}{\rm R-Max}
\DeclareMathOperator{\CentMax}{\rm C-Max}
\DeclareMathOperator{\m}{\mathfrak m}
\DeclareMathOperator{\ID}{Ideal}
\DeclareMathOperator{\CO}{Cone}
\def\RadCe {\sqrt[C]}
\def \S {{\mathcal S}}
\def \C {{\mathcal C}}
\def \RR {{\mathbb R}}
\begin{document}

\title[\tiny{On central orderings}]{On central orderings}
\author{Goulwen Fichou, Jean-Philippe Monnier and Ronan Quarez}
\thanks{The authors have received support from the Henri Lebesgue Center ANR-11-LABX-0020-01 and the project EnumGeom ANR-18-CE40-0009.}

\address{Goulwen Fichou\\
Univ Rennes, CNRS, IRMAR - UMR 6625, F-35000 Rennes, France}
\email{goulwen.fichou@univ-rennes.fr}

\address{Jean-Philippe Monnier\\
   LUNAM Universit\'e, LAREMA, Universit\'e d'Angers}
\email{jean-philippe.monnier@univ-angers.fr}

\address{Ronan Quarez\\
Univ Rennes\\
Campus de Beaulieu, 35042 Rennes Cedex, France}
\email{ronan.quarez@univ-rennes.fr}
\date\today
\subjclass[2020]{06F25, 14P99,13A99,26C99}
\keywords{real algebraic geometry, orderings, real spectrum, real algebra}

\begin{abstract} 
	We define the notion of central orderings for a general commutative ring $A$ which generalizes the notion of central points of irreducible real algebraic varieties. We study a central and a precentral loci which both live in the real spectrum of the ring $A$ and allow to state central Positivestellens\"atze in the spirit of Hilbert 17th problem.
\end{abstract}

\maketitle

\section{Introduction}

Let $R$ be a real closed field.
The famous Hilbert 17th problem, answered by Artin in 1927, states that a polynomial $f\in R[x_1,\ldots,x_n]$ is nonnegative on $R^n$ if and only if it 
is a sum of squares of rational functions, namely $f=\sum_{i=1}^r f_i^2$ where $f_i\in R(x_1,\ldots,x_n)$. We denote by $\sum \K(V)^2$ the sum of squares of rational functions.

Real algebra was developed to solve Hilbert 17th problem and also to formulate general
positivstellens\"atze for polynomials nonnegative on a given closed semialgebraic subset $W=\{f_1\geq 0,\ldots f_r\geq 0\}$ of $R^n$. Among theses positivstellens\"atze, one notably recovers a real nullstellensatz. 

It has been possible to generalize these positivstellens\"atze for functions in the coordinate ring $R[V]$ of an irreducible affine algebraic variety $V$ over $R$ (see \cite[Cor. 4.4.3]{BCR}). 
But, as noted in \cite[Example 6.1.8]{BCR}),
one difference with the original Hilbert 17th problem is that a polynomial function $f\in R[V]$ which can be written as a sum of squares in the fraction field $\K (V)$ may not be nonnegative on the whole set of real closed points $V(R)$ of $V$. It appears that such a positivstellensatz certifies positivity only on $\Cent(V(R))$, the central locus of $V$, which consists in the Euclidean closure of the nonsingular real closed points. More precisely, it provides an equivalence in the statement of Hilbert 17th property, in the sense that given $f\in\R[V]$:
$$f(\Cent(V(R)))\geq 0\iff f\in\sum \K(V)^2.$$

The origin of the adjective "central", which can be considered as the key notion in this paper, comes from \cite{Du}. 
This notion appears several times in \cite{BCR} where the link with the positivity of sums of squares of rational functions is noted. A theory of seminormalization of real algebraic varieties adapted to the central locus is developed in \cite{FMQ-futur2} and continued in \cite{Mnew} where the definition of central ideal is introduced.

The principal goal of this paper is to lay the fondations of central algebra. Let $A$ be a commutative domain with fraction field denoted by $\K(A)$. We study two subspaces of the set of all orderings in $A$, also called the real spectrum $\Sp_r A$ of $A$. These two subspaces are
adapted to the central locus of real varieties in order to get some
central positivstellens\"atze, a central nullstellensatz and a central Hilbert 17th property.

The paper is organized as follows. We start in section 2 with a reminder on real algebra and in particular the real spectrum of a ring as introduced in \cite{BCR}. We are particularly interested in the support mapping from the set of cones of $A$ to its set of ideals convex with respect to the cone of sums of squares.

In section 3, we study central ideals which have already been defined in \cite{Mnew}. This subcategory of real prime ideals has recently been used in \cite{FMQ-futur2} to develop the theory of central seminormalization (real version of the seminormalization introduced by Traverso \cite{T}). The motivation was the property that central ideals (that are in particular real ideals) behave much better than real ideals when we consider integral extensions of rings. Similarly to Dubois notion of central point of a real algebraic variety, we consider the notion of central orderings introduced in \cite{BP}, as the elements of $\Sp_r A$ which are in the closure of $\Sp_r \K(A)$ for the topology of the real spectrum. The set of central orderings, denoted by $\Sp_c A$, is a closed subset of $\Sp_r A$ and the supports of central orderings are exactly the central prime ideals of $A$.
We study these central orderings whose definition of topological nature is not easy to handle in order to prove algebraic statements as positivstellens\"atze. This motivates us to introduce another sort of orderings which we call precentral and are defined by a simple and natural algebraic condition. The precentral orderings are those orderings which contain the cone $A\cap \sum \K(A)^2$ and hence are a sup-class of central orderings. The set of precentral orderings, denoted by $\Sp_{pc }A$, is also a closed subset of $\Sp_r A$ and the supports of precentral orderings are again exactly the central prime ideals of $A$.

In section 4, we study the differences between central and precentral orderings, giving characterizations of these two kinds of orderings. Although theses orderings are distinct in general, it appears that they coincide for real algebraic varieties of dimension less than or equal to two.

In section 5, we give some precentral Positivstellens\"atze which comes naturally from the algebraic nature of precentral orderings. One of the main results of the paper is as follows.

\begin{thmx}\label{thmA}
Let $f_1,\ldots,f_r$ in $A$ and $f\in A$.
	Denote by $P\subset A$ the cone $(A\cap \sum \K(A)^2)[f_1,\ldots,f_r]$ and by 
	$\Lambda \subset \Sp_{pc} A$ the set $\{\alpha\in\Sp_{pc} A\mid f_1(\alpha)\geq 0,\ldots, f_r(\alpha)\geq 0\}$. Then
		\begin{enumerate}
\item $f\geq 0$ on $\Lambda$ if and only if $fq=p+f^{2m}$ for some $p,q$ in $P$ and $m\in \N$.
\item $f>0$ on $\Lambda$ if and only if $fq=1+p$ for some $p,q$ in $P$.		
\item $f=0$ on $\Lambda$ if and only if $f^{2m}+p=0$
for some $p$ in $P$ and $m\in \N$.		
	\end{enumerate}	
\end{thmx}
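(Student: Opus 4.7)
The plan is to reduce the three assertions to the classical (Krivine--Stengle) Positivstellensatz applied to a suitable preordering of $A$. The crucial feature of the precentral notion introduced in the paper is that $\Sp_{pc} A$ is characterised \emph{algebraically}: $\alpha\in\Sp_{pc}A$ iff $T_0\subset\alpha$, where $T_0:=A\cap\sum\K(A)^2$. This algebraic description is exactly what allows me to splice the precentral restriction into the standard framework for orderings containing a preordering, something that would be much harder for central orderings whose definition is topological.

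First I would verify that $P=T_0[f_1,\ldots,f_r]$ is a preordering of $A$. Stability under sum and product is immediate. The only substantive point is that $P$ contains all squares of elements of $A$: for $a\in A$ one has $a^2\in A$ and $a^2\in\sum\K(A)^2$, hence $a^2\in T_0\subset P$. (This in particular shows that $T_0$ itself is a preordering, since $\sum\K(A)^2$ is closed under sums and products.)

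Next I would identify $\Lambda$ with the closed subset $\{\alpha\in\Sp_r A\mid P\subset\alpha\}$ of the whole real spectrum. Indeed, since $\alpha$ is stable under sums and products, the inclusion $P\subset\alpha$ is equivalent to $T_0\subset\alpha$ and $f_1,\ldots,f_r\in\alpha$, i.e.\ to $\alpha\in\Sp_{pc}A$ together with $f_i(\alpha)\geq 0$ for every $i$; this is exactly the defining condition of $\Lambda$. Once this identification is in place, items (1), (2), (3) of the theorem become the three classical cases (nonnegative, strictly positive, vanishing) of the Positivstellensatz applied to the preordering $P$ on $A$, as stated for instance in \cite[Thm.\ 4.4.2]{BCR}.

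The proof has no real obstacle along its main line; the conceptual point worth highlighting is that the precentral condition was set up precisely so that Positivstellensatz machinery for preorderings transfers verbatim, with $T_0$ playing the role of the ``base'' preordering $\sum A^2$ used in the classical statement. What makes this theorem nontrivial in comparison with the analogous one for $\Sp_r A$ is not the proof itself but the identification of the correct cone $P$; once $P$ is pinned down, everything is formal.
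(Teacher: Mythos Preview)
Your argument is correct and follows essentially the same route as the paper: one recognizes that the algebraic description $\Sp_{pc}A=\{\alpha\in\Sp_r A\mid \C\subset\alpha\}$ allows $\Lambda$ to be written as the set of orderings containing $P=\C[f_1,\ldots,f_r]$, and then the formal Positivstellensatz (the paper invokes \cite[Prop.~4.4.1]{BCR} directly with $H=\C\cup\{f_1,\ldots,f_r,\ldots\}$, you package it via the preordering version) yields the three identities. The only cosmetic difference is the particular formulation of the Positivstellensatz cited.
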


As a consequence, we obtain also some central positivstellens\"atze when the positivity conditions on central and precentral orderings coincide.
In particular, we get geometric central positivstellens\"atze for algebraic varieties of dimension less than or equal to two.

The study done in section 4 shows that we cannot differentiate central and precentral orderings by the global positivity of a single function. It enables to state a general version of Hilbert 17th property.
\begin{thmx}\label{thmB}
Let $f\in A$. The following properties are equivalent :
\begin{enumerate}
\item $f\geq 0$ on $\Sp_c A$.
\item $f\geq 0$ on $\Sp_{pc} A$.
\item $f\in\sum \K(A)^2$.
\end{enumerate}\end{thmx}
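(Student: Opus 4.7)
The plan is to establish the cycle $(2)\Rightarrow(1)\Rightarrow(3)\Rightarrow(2)$. Two of these three implications are essentially formal once the chain of inclusions $\Sp_r \K(A) \subseteq \Sp_c A \subseteq \Sp_{pc} A$ is made explicit, while $(1)\Rightarrow(3)$ reduces to the classical Artin theorem for the fraction field $\K(A)$.

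First, for $\Sp_c A\subseteq \Sp_{pc}A$, which gives $(2)\Rightarrow(1)$: every $\alpha\in \Sp_r \K(A)$, restricted to $A$, contains $A\cap\sum\K(A)^2$ automatically, since a sum of squares of rational functions lies in every cone of $\K(A)$. Hence $\Sp_r \K(A)\subseteq \Sp_{pc}A$ as subsets of $\Sp_r A$, and because $\Sp_{pc}A$ is closed while $\Sp_c A$ is by definition the closure of $\Sp_r \K(A)$, the desired inclusion follows. For $(3)\Rightarrow(2)$ the argument is even more direct: if $f\in A\cap\sum\K(A)^2$, then $f$ itself belongs to the cone whose containment defines precentral orderings, so $f(\alpha)\geq 0$ for every $\alpha\in \Sp_{pc}A$.

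The heart of the proof is $(1)\Rightarrow(3)$. By the inclusion $\Sp_r \K(A)\subseteq \Sp_c A$, the hypothesis $f\geq 0$ on $\Sp_c A$ forces $f\geq 0$ under every ordering of the field $\K(A)$. Since $\K(A)$ is a field, Artin's solution of Hilbert's 17th problem in its abstract form asserts that an element of a field is a sum of squares if and only if it is nonnegative under every ordering; applied to $f\in A\subseteq \K(A)$, this yields $f\in\sum\K(A)^2$.

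The main conceptual obstacle is really the topological sandwich $\Sp_r \K(A)\subseteq \Sp_c A\subseteq \Sp_{pc}A$: once that is in place, Artin's theorem is perfectly positioned to supply the nontrivial direction. The remaining verifications are straightforward cone-containment checks that do not require the finer central machinery developed in earlier sections.
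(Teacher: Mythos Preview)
Your proof is correct and in fact more direct than the paper's. The paper derives the equivalence by first establishing the precentral Positivstellensatz (Theorem~\ref{pc-PSS}) via the formal Positivstellensatz of Theorem~\ref{FormalPSS}, obtaining from $f\geq 0$ on $\Sp_{pc}A$ an identity $fq=p+f^{2m}$ with $p,q\in\C$, and then manipulating this identity to conclude $f\in\C$ (Theorem~\ref{precentralHilbert17}); the equivalence of (1) and (2) is proved separately in Lemma~\ref{posprecent} by invoking the characterization of precentral orderings from Theorem~\ref{charprecentral}. Your argument bypasses both the formal Positivstellensatz and Theorem~\ref{charprecentral}: the inclusion $\Sp_r\K(A)\subset\Sp_c A$ immediately reduces $(1)\Rightarrow(3)$ to Artin's theorem for the field $\K(A)$, and the remaining implications are formal. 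The paper's route has the virtue that Theorem~B drops out as a corollary of the more general Theorem~A, which is of independent interest; your route has the virtue of showing that Theorem~B itself requires nothing beyond the definitions and the classical field-theoretic Artin theorem.
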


Note that when $A$ is the coordinate ring of an irreducible affine algebraic variety $V$ over a real closed field $R$, the previous properties are equivalent to  $f\geq 0$ on $\Cent(V(R))$.

Using the abstract formalism developed here, we are able to extend our positivstellens\"atze to other geometric settings than real algebraic varieties, namely the Nash and the real analytic settings.

The final section 6 deals with continuous rational functions. As previously recalled, one knows that a nonnegative $f\in R[x_1,\ldots,x_n]$ on $R^n$ is a sum of squares of rational functions. From \cite{Kre} it appears that $f$ is in fact a sum of squares of rational functions which can be extended continuously to the whole $R^n$. 
We study then the question of adding a continuity property in the third property of Theorem \ref{thmB}, and prove surprisingly that it is not always possible. Anyway, we establish a continuous central Hilbert 17th property when the non-negativity is assumed on the whole real spectrum.

\vskip 5mm

In all the paper, $R$ denotes a real closed field and all the rings are commutative and contain $\QQ$.

\section{Preliminaries on real algebra}
In this section we revisit the real algebra (introduced in \cite{BCR} and \cite{L}) from the angle of ideals convex with respect to the cone of sums of squares.

In this section $A$ is a ring.

\subsection{Preordering, convexity, convex and real ideals and the support mapping}

\begin{defn}
A cone of $A$ is a subset $P$ of $A$ such that $P+P\subset P$, $P\cdot P\subset  P$ and $A^2\subset P$. A cone $P$ is called proper if $-1\not\in P$.
\end{defn}

Note that the set $\sum A^2$ of sums of squares is the smallest cone of $A$. In case $-1\not\in\sum A^2$, we say that $A$ is a formally real ring, which means also that it admits a proper cone.
Another example of major interest in the paper is, if $A$ is an integral domain with fraction field $\K(A)$, the cone $A\cap \sum \K(A)^2$ of elements in $A$ that are sum of squares of elements in $\K(A)$. This cone plays a crucial role in the paper, it will be denoted simply by $\C=A\cap \sum \K(A)^2$.

We will encounter the notion of cone generated by a subset. Let $P$ be cone of $A$. If $S\subset A$ then $P[S]=\{\sum\limits_{i=1}^n t_is_i\mid t_i\in P,\,\,s_i\in S\}$ is the smallest cone of $A$ containing $P$ and $S$. If $S=\{f_1,\ldots,f_k\}$ then we also denote $P[S]$ by $P[f_1,\ldots,f_k]$.\\

Recall that for a given cone $P$ of $A$, the set $P\cap -P$ is called the support of $P$ and is denoted by $\supp(P)$.

\begin{prop} 
\label{commute1} 
We have a support map $$\supp: \CO (A)\to\ID(A), \,\,P\mapsto \supp(P)$$ 
which preserves inclusions.

Let $A\to B$ be a ring morphism. The diagram $$\begin{array}{ccc}
	\CO(B)&\stackrel{\supp}{\rightarrow}  & \ID(B) \\
	\downarrow&&\downarrow \\
	\CO(A)& \stackrel{\supp}{\rightarrow} & \ID(A)\\
\end{array}$$ is commutative, where the vertical arrows are the natural maps $\ID(B)\to \ID(A)$, $I\mapsto \varphi^{-1}(I)$, and $\CO(B)\to \CO (A)$, $P\mapsto \varphi^{-1}(P)$.
\end{prop}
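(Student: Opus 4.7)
The plan is to prove both claims by direct set-theoretic manipulation: there is no need for any real-spectrum machinery here, only the algebraic definition of a cone and the standing hypothesis $\QQ \subset A$.

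First I will show that $\supp(P) = P \cap -P$ is actually an ideal, which is what makes the support map well-defined. Additive closure is immediate from $P + P \subset P$ applied symmetrically to $P$ and $-P$. For absorption by $A$, the essential input is the difference-of-squares identity
\[ x = \left(\tfrac{x+1}{2}\right)^2 - \left(\tfrac{x-1}{2}\right)^2, \]
which requires $2$ to be invertible (hence the use of $\QQ \subset A$). Given $a \in P \cap -P$, multiplying this identity by $a$ writes $xa$ as $(\text{square})\cdot a + (\text{square})\cdot(-a)$, which lies in $P$ because cones are closed under products and both $a$ and $-a$ belong to $P$; the symmetric argument gives $-xa \in P$, so $xa \in P \cap -P$. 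Inclusion-preservation of $\supp$ is then tautological from $P_1 \subset P_2 \Rightarrow P_1 \cap -P_1 \subset P_2 \cap -P_2$.

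Next I will handle the functoriality square. For a ring morphism $\varphi : A \to B$, I first note that $\varphi^{-1}(Q)$ is indeed a cone of $A$ whenever $Q$ is a cone of $B$, since $\varphi$ preserves sums, products, and squares (and $\varphi^{-1}(-1) \supset \emptyset$ poses no issue because we are not requiring properness). Commutativity of the diagram then reduces to the identity
\[ \varphi^{-1}(Q) \cap \bigl(-\varphi^{-1}(Q)\bigr) = \varphi^{-1}(Q \cap -Q), \]
which follows by a one-line definition chase: an element $a \in A$ lies in the left-hand side precisely when both $\varphi(a)$ and $-\varphi(a) = \varphi(-a)$ belong to $Q$, and this is exactly membership in the right-hand side.

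The main (and really only) obstacle is the absorption step of the first part, which is the one place where a non-trivial algebraic identity --- together with the $\QQ$-algebra hypothesis --- is genuinely needed; everything else is a matter of unwinding definitions.
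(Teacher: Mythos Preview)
Your proof is correct and follows essentially the same approach as the paper: the key absorption step uses the same difference-of-squares identity (the paper writes it as $xy=\frac14 x(y+1)^2-\frac14 x(y-1)^2$, which is your identity multiplied through by $a$), and the commutativity of the diagram is dispatched by the same definition chase that the paper calls ``straightforward''. You simply spell out more of the routine verifications.
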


\begin{proof}
The fact that the support map is well-defined follows directly from the formula
$$xy=\frac14 x(y+1)^2-\frac14 x(y-1)^2$$
for $x,y\in A$. The commutativity of the diagram is straightforward.
\end{proof}

Note that the support map sends a proper cone on a proper ideal. 
We are interested more generally by  characterizing the image of the support map. Note that this map is in general not surjective, for example the prime ideal $(x^2+1)\subset\RR[x]$ is not the support of a cone otherwise this one would not be proper.

We recall to this aim the notion of convexity of an ideal related to a given cone \cite{BCR}.
\begin{defn}
Let $P$ be a cone of $A$.
An ideal $I$ of $A$ is called $P$-convex if 
$$p_1+p_2\in I\,\,{\rm with}\,\,p_1\in P\,\,{\rm and}\,\,p_2\in P\Rightarrow p_1\in I\,\,{\rm and}\,\,p_2\in I.$$
\end{defn}

The support of a cone $P$ is always convex for this cone, and it is easy to check that it is even the smallest $P$-convex ideal.


We give an elementary property about convexity that will be useful in the sequel.

\begin{lem} \label{spec+conv0}
Let $P$ and $Q$ be cones of $A$ with $P\subset Q$, and $I\subset A$ be a $Q$-convex ideal. Then $I$ is $P$-convex.
\end{lem}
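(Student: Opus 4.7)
The plan is to unravel the definitions and observe that the hypothesis for $P$-convexity is strictly weaker than the hypothesis for $Q$-convexity, so the conclusion is immediate.

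More precisely, I would pick arbitrary $p_1,p_2\in P$ with $p_1+p_2\in I$, and verify the defining property of $P$-convexity for $I$. Since by assumption $P\subset Q$, the elements $p_1$ and $p_2$ both lie in $Q$. Thus we have an equality $p_1+p_2\in I$ with $p_1,p_2\in Q$, which is precisely the hypothesis of $Q$-convexity for $I$. Applying the $Q$-convexity of $I$ then gives $p_1\in I$ and $p_2\in I$, which is the desired conclusion.

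There is no real obstacle here; the statement is essentially a formal consequence of the fact that convexity is a property quantifying over elements of a cone, and restricting to a smaller cone only weakens the quantifier. In particular, no use is made of the ring structure beyond what is already encoded in the definition of a cone, and no auxiliary lemma is required.
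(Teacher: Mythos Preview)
Your proof is correct and is exactly the intended argument; the paper itself treats this lemma as an elementary property and does not spell out a proof, since it follows immediately from the definition in the way you describe.
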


The following result is useful to study the image of the support map.

\begin{lem} \label{spec+conv1}
Let $P$ be a cone and $I$ be an ideal of $A$. There exists a cone $Q$ of $A$ such that $P\subset Q$ and $\supp(Q)=I$ if and only if $I$ is $P$-convex. In this situation, $I+P$ is the smallest cone containing $P$ with support $I$ and it satisfies $I+P=P[I]$.
\end{lem}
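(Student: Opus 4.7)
The plan is to handle the two directions of the equivalence separately, then verify the two additional assertions about $I+P$ being the smallest such cone and equalling $P[I]$.

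For the forward direction, I would assume a cone $Q \supset P$ with $\supp(Q) = I$ exists and derive $P$-convexity of $I$ directly. Given $p_1, p_2 \in P$ with $p_1+p_2 \in I$, the inclusion $P \subset Q$ places $p_1, p_2$ in $Q$, while $p_1+p_2 \in \supp(Q) = Q \cap -Q$ gives $-(p_1+p_2) \in Q$. Then $-p_1 = -(p_1+p_2) + p_2$ is a sum of elements of $Q$, hence lies in $Q$; combined with $p_1 \in Q$ this yields $p_1 \in Q \cap -Q = I$, and symmetrically for $p_2$. This is the routine half.

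For the backward direction, the natural candidate is $Q = I + P$, and I would verify in turn that it is a cone, that its support is $I$, and that it is minimal. Closure under addition is immediate; closure under multiplication uses that $I$ is an ideal to absorb the three cross-terms of $(i_1+p_1)(i_2+p_2)$ into $I$; and $A^2 \subset P \subset I+P$ is obvious. The inclusion $I \subset \supp(I+P)$ is clear from $\pm x \in I$ when $x \in I$. The reverse inclusion $\supp(I+P) \subset I$ is where the hypothesis genuinely enters: if $x = i_1+p_1 = -(i_2+p_2)$, then $p_1+p_2 = -(i_1+i_2) \in I$, and $P$-convexity forces $p_1 \in I$, hence $x \in I$. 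Minimality is then free: any cone $Q$ containing $P$ with support $I$ must contain $I$ (as $I = \supp(Q) \subset Q$) and $P$, hence contain $I+P$.

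The identification $I+P = P[I]$ follows from the characterization of $P[I]$ recalled earlier in the paper as the smallest cone containing $P$ and $I$: on the one hand $I+P$ is a cone containing both, giving $P[I] \subset I+P$; on the other hand $P[I]$ contains $P$ and $I$ and is closed under addition, giving $I+P \subset P[I]$.

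I do not expect a serious obstacle here. The only subtle point is making sure the $P$-convexity of $I$ is used in exactly the right spot, namely to conclude $\supp(I+P) \subset I$; every other verification is a direct manipulation of the defining closure properties of cones and ideals.
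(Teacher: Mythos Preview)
Your proposal is correct and follows essentially the same approach as the paper: the key step in both is that $P$-convexity of $I$ forces $\supp(I+P)\subset I$, argued just as you do. The paper is simply terser—it invokes Lemma~\ref{spec+conv0} for the forward direction instead of unfolding it, and leaves the verifications that $I+P$ is a cone, that it is minimal, and that it equals $P[I]$ implicit—so your version is a more detailed rendering of the same argument.
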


\begin{proof} Assume that $I$ is $P$-convex. The point is to prove that $\supp I+P=I$. To prove the non-obvious inclusion, let $q=a+b\in\supp(I+P)$ with $a\in I$ and $b\in P$. So $q=a+b\in -(I+P)$ and thus $b\in -(I+P)$. We have $b=-a'-b'$ with $a'\in I$ and $b'\in P$ and it follows that $b+b'\in I$. Since $I$ is $P$-convex then $b\in I$ and thus $q\in I$. It proves $I$ is the support of $I+P$.

The converse implication comes from Lemma \ref{spec+conv0}.
\end{proof}


We answer now to the question asked above concerning the image of the support map.
\begin{thm} \label{suppmap1}
The image of the support map $\supp: \CO (A)\to\ID(A)$ is the set of $\sum A^2$-convex ideals of $A$.
\end{thm}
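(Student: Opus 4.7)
The plan is to prove the two inclusions separately, relying on the two preceding lemmas. The forward inclusion amounts to showing that the support of any cone is $\sum A^2$-convex, while the reverse inclusion amounts to realizing every $\sum A^2$-convex ideal as the support of some cone.

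First I would show that $\supp(\CO(A)) \subset \{\sum A^2\text{-convex ideals of } A\}$. Let $P \in \CO(A)$ and set $I = \supp(P)$. As observed just after the definition of convexity, $I$ is $P$-convex (indeed it is the smallest $P$-convex ideal). Since $\sum A^2$ is the smallest cone of $A$, we have $\sum A^2 \subset P$, and Lemma \ref{spec+conv0} then gives that $I$ is $\sum A^2$-convex.

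For the reverse inclusion, let $I$ be a $\sum A^2$-convex ideal of $A$. Apply Lemma \ref{spec+conv1} with the cone $P = \sum A^2$: since $I$ is $P$-convex, there exists a cone $Q$ of $A$ with $\sum A^2 \subset Q$ and $\supp(Q) = I$. Thus $I$ lies in the image of the support map.

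Both directions are essentially immediate consequences of the two lemmas, so there is no real obstacle here; the only thing to be careful about is the direction of the inclusion in Lemma \ref{spec+conv0} (convexity with respect to a larger cone implies convexity with respect to a smaller cone), which is exactly what is needed to pass from $P$-convexity of $\supp(P)$ to $\sum A^2$-convexity.
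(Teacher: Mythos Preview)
Your proof is correct and follows essentially the same approach as the paper: both directions are handled exactly as you describe, using that $\supp(P)$ is $P$-convex together with Lemma~\ref{spec+conv0} for the forward inclusion, and Lemma~\ref{spec+conv1} with $P=\sum A^2$ for the reverse. The only cosmetic difference is that the paper names the witnessing cone explicitly as $I+\sum A^2$ rather than just invoking the existence of some $Q$.
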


\begin{proof} 
For $P$ a cone of $A$, $\supp(P)$ is $P$-convex and thus $\sum A^2$-convex since $\sum A^2\subset P$.

Let $I$ be a $\sum A^2$-convex ideal of $A$. Then the cone $I+\sum A^2$ is a cone with support $I$ from Lemma \ref{spec+conv1}.
\end{proof}



There exists a notion of radical ideal with respect to a cone which is no more than the convexity with respect to the cone plus the classical radicality.
\begin{defn}
Let $P$ be a cone of $A$.
An ideal $I$ of $A$ is called $P$-radical if 
$$a^2+p\in I\,\,{\rm with}\,\,a\in A\,\,{\rm and}\,\,p\in P\Rightarrow a\in I$$
\end{defn}

It means equivalently that the ideal is radical and $P$-convex by \cite[Prop. 4.2.5]{BCR}. For instance a real ideal, which is by definition a $\sum A^2$-radical ideal, is radical and $\sum A^2$-convex. 
Our interest in the notion of $\sum A^2$-convex ideals is motivated by the natural feeling that some non real ideals seem closer to be real (like the ideal $(x^2)$ in $\R[x]$) than others (e.g the ideal $(x^2+1)$ in $\R[x]$). And indeed one may check that the ideal $(x^2)$ is $\sum \R[x]^2$-convex.

\subsection{Orderings, real and Zariski spectra and the support mapping}

We denote by $\Sp A$ (resp. $\RSp A$)
the (resp. real) Zariski spectrum of $A$, i.e the set of all
(resp. real) prime ideals of $A$. The set of maximal (resp. and real)
ideals is denoted by $\Max A$ (resp. $\ReMax A$). We endow $\Sp A$
with the Zariski topology (whose closed sets are) generated by the sets $\V(f)=\{\p\in\Sp A\mid 
f\in\p\}$ for $f\in A$. The
subsets $\RSp A$, $\Max A$ and $\ReMax A$ of $\Sp A$ are endowed with
the induced Zariski topology.
We denote also $\V(I)=\{\p\in\Sp A\mid 
I\subset\p\}$ for $I$ an ideal of $A$.

For $\p\in\Sp A$, we denote by $k(\p)$ the residue field at $\p$ i.e the fraction field of $A/\p$.

\begin{defn}
A proper cone $P$ is called an ordering if it satisfies 
$$ab\in P\Rightarrow a\in P\,\,{\rm or}\,\,-b\in P.$$
The set of orderings of $A$ is denoted by $\Sp_r A$.
\end{defn}

We recall the principal properties of orderings.
\begin{prop} \cite{BCR}
\label{primeordering}
Let $P$ be an ordering of $A$. We have
\begin{enumerate}
\item $P\cup-P=A$.
\item $\supp(P)$ is a real prime ideal of $A$.
\item $\overline{P}=\{ \overline{a}/\overline{b} \in k(\supp(P))\mid ab\in P\}$ is an ordering of $k(\supp(P))$ such that $P=\varphi^{-1} (\overline{P})$ with $\varphi :A\to k(\supp(P))$ the canonical morphism and $\overline{a}$, $\overline{b}$ denote the classes of $a$ and $b$ in $A/\supp(P)$.
\item There exists a morphism $\alpha:A\to R_\alpha$ such that $R_\alpha$ is a real closed field, $\ker\alpha=\supp(P)$ and $P=\phi^{-1}((R_\alpha)_+)$.
\end{enumerate}
Conversely, given $\alpha:A\to R_\alpha$ a morphism into a real closed field, $P_\alpha=\alpha^{-1}((R_\alpha)_+)$ is an ordering of $A$ with support $\ker\alpha=\supp(P_\alpha)$.
\end{prop}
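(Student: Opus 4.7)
The plan is to verify the four numbered assertions in order, using each to feed into the next, and then dispatch the converse at the end. Statement (1) falls out immediately from the ordering condition $ab\in P\Rightarrow a\in P\text{ or }-b\in P$ applied with $b=a$: since $a^{2}\in A^{2}\subset P$ tautologically, we conclude $a\in P$ or $-a\in P$ for every $a\in A$.

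For (2), recall that $\supp(P)=P\cap(-P)$ is already an ideal by Proposition \ref{commute1}, and moreover it is $\sum A^{2}$-convex (indeed $P$-convex) as the smallest $P$-convex ideal. To prove primality, suppose $ab\in\supp(P)$ with $a\notin\supp(P)$. Since $a\notin\supp(P)$ at least one of $a,-a$ lies outside $P$; combining with (1), I may, up to replacing $a$ by $-a$, assume $a\in P$ and $-a\notin P$. Applying the ordering axiom to the factorization $ab=b\cdot a\in P$ yields $b\in P$ (otherwise we would have $-a\in P$), and applying it to $-ab=(-a)\cdot b\in P$ yields $-b\in P$ (otherwise again $-a\in P$). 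Hence $b\in\supp(P)$. Realness then follows by a standard iteration: if $\sum_{i}a_{i}^{2}\in\supp(P)$, the $P$-convexity of $\supp(P)$ splits off each square, so $a_{i}^{2}\in\supp(P)$, and primality gives $a_{i}\in\supp(P)$.

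For (3), I define $\overline{P}$ as in the statement. The delicate point is well-definedness on equivalence classes of fractions: given two representations $\overline{a}/\overline{b}=\overline{a'}/\overline{b'}$ with $b,b'\notin\supp(P)$, one must show $ab\in P\Leftrightarrow a'b'\in P$, which uses primality of $\supp(P)$ to manipulate denominators together with $P\cup(-P)=A$ to fix signs. Once well-definedness is established, the cone axioms and the ordering property for $\overline{P}$ descend directly from those of $P$, and the equality $P=\varphi^{-1}(\overline{P})$ is immediate. For (4), the ordered field $(k(\supp(P)),\overline{P})$ admits a real closure $R_{\alpha}$ by the standard real closure theorem; the composition $\alpha\colon A\twoheadrightarrow A/\supp(P)\hookrightarrow k(\supp(P))\hookrightarrow R_{\alpha}$ then has $\ker\alpha=\supp(P)$ and $\alpha^{-1}((R_{\alpha})_{+})=P$ by construction. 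For the converse, given $\alpha\colon A\to R_{\alpha}$, the pullback $P_{\alpha}=\alpha^{-1}((R_{\alpha})_{+})$ is clearly a cone, is proper since $-1\notin(R_{\alpha})_{+}$, and satisfies the ordering axiom because $(R_{\alpha})_{+}$ does in the totally ordered field $R_{\alpha}$; its support is $\alpha^{-1}((R_{\alpha})_{+}\cap-(R_{\alpha})_{+})=\alpha^{-1}(0)=\ker\alpha$.

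The step I expect to require the most care is the well-definedness and ordering check in (3), since one must systematically track sign cases arising from switching representatives and handle the case where $a$ or $b$ lies in $\supp(P)$. The other possibly annoying step, the existence of the real closure in (4), I would simply cite as a standard black-box result from the theory of ordered fields rather than reproving it here.
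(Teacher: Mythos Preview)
Your proof sketch is correct. Note, however, that the paper does not actually prove this proposition: it is stated with a citation to \cite{BCR} and used as a black box throughout. So there is no ``paper's own proof'' to compare against; you have supplied what the authors chose to omit. Your argument follows the standard route one finds in \cite{BCR} or \cite{L}: derive (1) from the ordering axiom applied to $a\cdot a$, use (1) together with the axiom to get primality of the support, then push the ordering down to the residue field and invoke the existence of a real closure. The one place worth tightening is (3): beyond well-definedness of $\overline{P}$ on fractions, you should also spell out why $\overline{P}$ is \emph{proper} in $k(\supp(P))$ (this uses $-1\notin P$ and that denominators are units), but this is routine.
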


Thus one can see an ordering of $A$ equivalently as a morphism into a real closed field. We will use this identification in all the paper.

By \cite[Thm. 4.3.7]{BCR}, $A$ is formally real if and only if $\Sp_r A\not=\emptyset$ if and only if $\RSp A\not=\emptyset$.
Let $\alpha\in\Sp_r A$. Let $a\in A$, we set $a(\alpha)\geq 0$ if $a\in P_\alpha$, $a(\alpha)> 0$ if $a\in P_\alpha\setminus\supp(P_\alpha)$, $a(\alpha)= 0$ if $a\in \supp(P_\alpha)$.
A set of the form $$\S(f_1,\ldots,f_k)=\{ \alpha\in\Sp_r A\mid f_1(\alpha)>0,\ldots, f_k(\alpha)>0\}$$
for $f_1,\ldots,f_k$ some elements of $A$,  is called a basic open subset of $\Sp_r A$. A basic open subset of the form $\S(f)$, for a $f\in A$, is called principal.
The real spectrum $\Sp_r A$ is a topological space for the topology (whose open sets are) generated by the basics open sets. In the sequel, if $T$ is a subset of $\Sp_r A$ then we will denote by $\overline{T}$ the closure of $T$ for the real spectrum topology. A constructible subset of $\Sp_r A$ is a finite boolean combination of basic open sets.

Given two orderings $\alpha$ and $\beta$, one says that $\beta$ specializes to $\alpha$, and write $\beta\to\alpha$, when $P_\alpha \subset P_\beta$. An equivalent characterization from \cite[Prop. and Defn. 7.1.18]{BCR} is $(P_\alpha\setminus(-P_\alpha))\subset (P_\beta\setminus(-P_\beta))$, and another is $\alpha\in\overline{\{\beta\}}$.

As a consequence closed subsets of the real spectrum are closed by specialization, and the converse is also true for constructible closed subsets \cite[Prop. 7.1.21]{BCR}.

The restriction of the support map $\supp: \CO (A)\to\ID(A)$ to orderings gives a map
$\supp:\Sp_r A\to \Sp A$ whose image is contained in $\RSp A$. We complete here the study of this support map initiated in Theorem \ref{suppmap1} and Proposition \ref{commute1}.
\begin{prop} \label{suppmap2}
The support map $\supp:\Sp_r A\to \Sp A$ is continuous and its image is $\RSp A$.

A morphism $\varphi: A\to B$ induces natural maps $\RSp(B)\to \RSp(A)$ and $\Sp_r B\to \Sp_r A$ and hence a commutative diagram:
$$\begin{array}{ccc}
	\Sp_r B&\stackrel{\supp}{\rightarrow}  & \RSp B \\
	\downarrow&&\downarrow \\
	\Sp_r A&\stackrel{\supp}{\rightarrow} & \RSp A\\
\end{array}$$ 
\end{prop}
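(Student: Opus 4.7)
The plan is to verify three things in turn: continuity of $\supp$, identification of its image with $\RSp A$, and naturality producing the commutative square. The continuity is the easiest part. Since the Zariski topology on $\Sp A$ is generated by the complements $D(f) = \Sp A \setminus \V(f)$, it suffices to check that each preimage $\supp^{-1}(D(f))$ is open in the real spectrum topology. By Proposition \ref{primeordering}(1), an element $\alpha \in \Sp_r A$ satisfies $f \notin \supp(P_\alpha)$ iff $f(\alpha) \neq 0$, iff either $f(\alpha) > 0$ or $(-f)(\alpha) > 0$. Hence $\supp^{-1}(D(f)) = \S(f) \cup \S(-f)$, which is a union of basic open sets.

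For the image, the inclusion $\supp(\Sp_r A) \subset \RSp A$ is exactly Proposition \ref{primeordering}(2). For the reverse inclusion, take $\p \in \RSp A$, so that $A/\p$ is a formally real domain. A standard clearing-denominators argument shows that the fraction field $k(\p)$ is then also formally real, hence by the Artin--Schreier theorem it admits an ordering, which is equivalent to the existence of an embedding $k(\p) \hookrightarrow R_\alpha$ into some real closed field. Composing with the canonical map $A \to A/\p \hookrightarrow k(\p)$ produces a ring morphism $\alpha : A \to R_\alpha$ with kernel $\p$. By the converse part of Proposition \ref{primeordering}, this morphism defines an element of $\Sp_r A$ with support $\p$, so $\p$ belongs to the image of $\supp$.

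For naturality, a morphism $\varphi : A \to B$ induces the map $\RSp B \to \RSp A$, $\q \mapsto \varphi^{-1}(\q)$: this is well-defined because $A/\varphi^{-1}(\q)$ embeds into the formally real ring $B/\q$, hence is itself formally real. Similarly, viewing an ordering $\beta \in \Sp_r B$ as a morphism $\beta : B \to R_\beta$, the composition $\beta \circ \varphi : A \to R_\beta$ is a morphism into a real closed field and thus an element of $\Sp_r A$; this is exactly the pullback $\varphi^{-1}(P_\beta)$ at the level of cones. The commutativity of the resulting square is then the restriction of the diagram of Proposition \ref{commute1} to the subsets of orderings and real primes, using that $\supp(\varphi^{-1}(P)) = \varphi^{-1}(P) \cap -\varphi^{-1}(P) = \varphi^{-1}(\supp P)$.

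The only non-formal step is the Artin--Schreier argument used to realize each real prime as a support; continuity and the commutative square are essentially bookkeeping on top of the material already established in Proposition \ref{commute1} and Proposition \ref{primeordering}.
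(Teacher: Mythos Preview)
Your proof is correct and follows essentially the same strategy as the paper, which simply cites \cite[Prop.~7.1.8]{BCR} for continuity and \cite[Prop.~4.3.8]{BCR} for the surjectivity onto $\RSp A$; you have unpacked both references explicitly. The only minor difference is that for surjectivity you pass through the formally real residue field $k(\p)$ and invoke Artin--Schreier, whereas the paper phrases it via $\sum A^2$-convexity of $\p$ before appealing to the cited result---but these are two standard routes to the same conclusion.
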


\begin{proof}
Let $\p$ be a real prime ideal. Since $\p$ is $\sum A^2$-convex then it follows from   \cite[Prop. 4.3.8]{BCR} that there exists an ordering with support equal to $\p$. It proves $\supp(\Sp_r A)=\RSp A$. 

The continuity follows from \cite[Prop. 7.1.8]{BCR} or \cite[Prop. 4.11]{L}.
\end{proof}

For the convenience of the reader, we recall from \cite[Prop. 4.4.1]{BCR} the formal Positivstellensatz, a key tool that we will use several times in the paper.

\begin{thm}\label{FormalPSS}
	Let $A$ a commutative ring. In $A$ consider a subset $H$, a monoid $M$ generated by the $(b_j)_{j\in L}$ and an ideal $I$ generated by the $(c_k)_{k\in T}$.

There is no $\alpha\in \Sp_r A$ such that 
$H\subset P_\alpha$, $\forall j\in L$ $b_j\notin \supp( \alpha)$, and $\forall k\in T$ $c_k\in \supp (\alpha)$ if and only if we have an identity
$$p+b^2+c=0$$
where $p\in \sum A^2[H]$, $b\in M$, $c\in I$.
\end{thm}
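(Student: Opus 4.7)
The easy direction is immediate: evaluating an identity $p+b^2+c=0$ at a candidate $\alpha$ yields $p(\alpha)\geq 0$ (as $H\subset P_\alpha$ implies $\sum A^2[H]\subset P_\alpha$), $c(\alpha)=0$ (as $I\subset \supp\alpha$), and $b^2(\alpha)>0$ (as $\supp\alpha$ is prime and no generator $b_j$ of $M$ lies in it), which contradicts the equation.

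For the converse, assume that no such identity exists. The plan is to construct $\alpha$ via a Zorn argument on preorderings. Set $T=\sum A^2[H]$ and $P_0=T+I$; a direct check shows that $P_0$ is a preordering of $A$, and the non-existence hypothesis is equivalent to $-b^2\notin P_0$ for every $b\in M$. Let $\Sigma$ be the family of preorderings $P$ of $A$ with $P_0\subset P$ and $-b^2\notin P$ for all $b\in M$. Then $\Sigma$ is nonempty and closed under unions of chains, and since $1\in M$ every $P\in\Sigma$ is automatically proper. Zorn's Lemma provides a maximal element $P^*\in\Sigma$.

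Granting that $P^*$ is an ordering, the corresponding $\alpha$ has the desired properties: $H\subset P^*$ by construction, $I\subset \supp P^*$ since both $I$ and $-I$ lie in $P_0\subset P^*$, and $b_j\notin \supp P^*$ because otherwise $-b_j^2\in P^*$ while $b_j\in M$. So the main obstacle is to verify that $P^*$ really is an ordering, namely that $P^*\cup -P^*=A$ and that $\supp P^*$ is prime.

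Both verifications proceed by contradiction using the maximality of $P^*$: enlarging $P^*$ by $a$ or by $-a$ (respectively by $-x$ and by $-y$ in the primality case) forces, by maximality, the appearance of relations $-b^2=p+aq$ with $b\in M$ and $p,q\in P^*$. Multiplying two such relations and expanding yields, after cancellation, an identity of the form $(bb')^2 + (\text{sum of elements of } P^*)=0$ (with $bb'\in M$), from which one extracts $-(bb')^2\in P^*$, contradicting $P^*\in\Sigma$. In the primality step the isolation of the pure square $(bb')^2$ additionally relies on the $P^*$-convexity of $\supp P^*$, which is a general property of the support of any cone. This is the technical heart of the argument; the rest is bookkeeping.
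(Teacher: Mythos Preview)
The paper does not supply a proof of this statement: it is explicitly recalled ``for the convenience of the reader'' from \cite[Prop.~4.4.1]{BCR}, so there is no in-paper argument to compare against. Your proof is the standard one (and is essentially the argument in \cite{BCR}): build the cone $P_0=\sum A^2[H]+I$, observe that the non-existence of an identity is exactly the condition $-b^2\notin P_0$ for all $b\in M$, apply Zorn to the family of cones containing $P_0$ and avoiding all $-b^2$, and check that a maximal element is an ordering by the usual ``multiply two obstructions'' trick. The computations you sketch are correct; in particular, for $P^*\cup(-P^*)=A$ one multiplies $aq_1=-b^2-p_1$ and $aq_2=b'^2+p_2$ to obtain $(bb')^2+\big(a^2q_1q_2+b^2p_2+p_1b'^2+p_1p_2\big)=0$, and for primality of $\supp P^*$ the $P^*$-convexity of $\supp P^*$ is exactly what isolates $(bb')^2$. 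Two minor remarks: the paper uses the word \emph{cone} for what you call a \emph{preordering}, and the paper's definition of ordering (the implication $ab\in P\Rightarrow a\in P$ or $-b\in P$) is equivalent to the pair of conditions $P\cup(-P)=A$ and $\supp P$ prime that you verify.
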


\subsection{Real spectrum of geometric rings}

Let us recall how the real points of a variety are related to the real spectrum of its coordinate ring. Assume $V=\Sp R[V]$ is an affine algebraic variety over $R$ with coordinate ring $R[V]$. We denote by $V(R)$ the set of real closed points of $V$ i.e the subset of $\p\in V$ such that $k(\p)=R$. We have inclusions $$V(R)\hookrightarrow \RSp R[V]\hookrightarrow\Sp R[V]$$ that makes $V(R)$ a topological space for the (induced) Zariski topology. The real zero sets $\Zi(f)=\V(f)\cap V(R)$, for $f\in R[V]$, generate the closed subsets of $V(R)$ for the Zariski topology. If $T$ is a subset of $V(R)$ then we will denote by $\overline{T}^Z$ the closure of $T$ for the Zariski topology. Since $R[V]=R[x_1,\ldots,x_n]/I$ for a radical ideal $I\subset R[x_1,\ldots,x_n]$ then we get an inclusion $$V(R)\hookrightarrow R^n$$ that identifies $V(R)$ as a closed subset of $R^n$ for the Zariski topology and also for the Euclidean topology. 
Recall that the unique ordering on $R$ gives rise to an order topology on the affine spaces $R^n$ called the Euclidean topology \cite{BCR}, in a similar way than the Euclidean topology on $\R^n$, even if the topological space $R$ is not connected (except in the case $R=\R$) or the closed interval $[0,1]$ is in general not compact. If $T$ is a subset of $V(R)$ then we will denote by $\overline{T}^E$ the closure of $T$ for the Euclidean topology.  An element of $V(R)$ can also be seen as a morphism from $R[V]$ to $R$. So we get a third inclusion $$V(R)\hookrightarrow \Sp_r R[V]$$
that identifies $V(R)$ as a closed (by specialization) subset of $\Sp_r R[V]$. For $x\in V(R)$, we denote by $\alpha_x:R[V]\to R$, $f\mapsto f(x)$ the associated ordering of $R[V]$.
A set of the form $$S(f_1,\ldots,f_k)=\{ x\in V(R)\mid f_1(x)>0,\ldots, f_k(x)>0\}$$
for $f_1,\ldots,f_k$ some elements of $R[V]$,  is called a basic open subset of $V(R)$, it is an open subset of $V(R)$ for the Euclidean topology. 
A basic open subset of the form $S(f)$, for a $f\in R[V]$, is called principal. Clearly, the principal open subsets generates the Euclidean topology. A semialgebraic subset of $V(R)$ is a finite boolean combination of basic open sets. By \cite[Prop. 7.2.2 and Thm. 7.2.3]{BCR}, the inclusion $V(R)\hookrightarrow \Sp_r R[V]$ induces a one-to-one map between the semialgebraic subsets of $V(R)$ and the constructible subsets of $\Sp_r R[V]$, this map sends the semialgebraic set $S$ to the constructible $\widetilde{S}$ described by the same inequalities than $S$. For $f_1,\ldots,f_k$ in $R[V]$ we have $\widetilde{S(f_1,\ldots,f_k)}=\S(f_1,\ldots,f_k)$. One important property of this map is the commutation with the closures for the Euclidean topology and the real spectrum topology \cite[Thm. 7.2.3]{BCR}, namely for a semialgebraic subset $S$ of $V(R)$ we have
$$\widetilde{(\overline{S}^E)}=\overline{(\widetilde{S})}.$$

\subsection{Stability index}\label{StabIndex}
The material of this subsection will be used in section \ref{CentvsPreccent}, hence the reader may momentarily skip it until reaching this section.

\begin{defn}
The stability index of $A$ is the infimum of the numbers $k\in\N$ such that for any basic open subset $S$ of $\Sp_r A$ there exist $f_1,\ldots,f_k$ in $A$ such that $S=\S(f_1,\ldots,f_k)$.

Similarly, the stability index $\sta(U)$ of an open subset $U$ of $\Sp_r A$ is the infimum of the numbers $k\in\N$ such that for any basic open subset $S$ of $\Sp_r A$ such that $S\subset U$ there exist $f_1,\ldots,f_k$ in $A$ such that $S=\S(f_1,\ldots,f_k)$. 
 If $U=\emptyset$ then we set $\sta(U)=0$.
\end{defn}

When $V=\Sp R[V]$ is an affine algebraic variety over $R$ with coordinate ring $R[V]$, from the properties of the map $S\mapsto \widetilde{S}$ exposed previously then, it is clear that the stability index of $R[V]$ is also the infimum of the numbers $k\in\N$ such that for any basic open subset $S$ of $V(R)$ there exist $f_1,\ldots,f_k$ in $R[V]$ such that $S=S(f_1,\ldots,f_k)$. In that case, the stability index of $R[V]$ is also called the stability index of $V(R)$.

We recall the famous theorem of Bröcker \cite{Br} and Scheiderer \cite{She}: 
\begin{thm} (Bröcker-Scheiderer)\\
Let $V=\Sp R[V]$ be an affine algebraic variety over $R$ with coordinate ring $R[V]$. Then, the stability index of $R[V]$ coincides with 
the stability index of $V(R)$ and is equal to the dimension of $V(R)$ (as a semialgebraic set).
\end{thm}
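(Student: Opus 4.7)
The plan is to separate the statement into two assertions: (a) $\sta(R[V])=\sta(V(R))$, and (b) both numbers equal $\dim V(R)$.

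Part (a) is purely formal from the tilde correspondence recalled in the excerpt. The bijection $S\mapsto \widetilde S$ between semialgebraic subsets of $V(R)$ and constructible subsets of $\Sp_r R[V]$ sends $S(f_1,\ldots,f_k)$ to $\S(f_1,\ldots,f_k)$ and is inclusion-preserving. Hence any presentation of a basic open set on one side with $k$ strict inequalities transports to the other with the same functions, and conversely. This gives both $\sta(R[V])\leq \sta(V(R))$ and $\sta(V(R))\leq \sta(R[V])$ at once.

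For part (b) I would handle the two inequalities separately. The easier bound $\sta(R[V])\geq \dim V(R)$ is obtained by exhibiting, after a Noether normalisation and passing to an open stratum of dimension $d=\dim V(R)$, a basic open subset that cannot be described by fewer than $d$ strict inequalities. The canonical candidate is the positive orthant $S(x_1,\ldots,x_d)$ inside $R^d$, whose minimality is witnessed by an explicit $2^d$-fan of orderings on $R(x_1,\ldots,x_d)$: since any basic open constructible set cuts off an even or total number of elements of any fan, a presentation with $k$ inequalities cannot single out exactly $2^{d-1}$ members of a $2^d$-fan unless $k\geq d$.

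The reverse inequality $\sta(R[V])\leq \dim V(R)$ is the substantive content and is clearly the main obstacle. My approach would rest on Br\"ocker's fan criterion, characterising basic open constructible subsets of $\Sp_r R[V]$ via their intersection with fans, combined with the fact that every fan $F\subset \Sp_r R[V]$ has cardinality at most $2^{\dim V(R)}$. The second ingredient is the delicate one: one associates to a fan a valuation of the residue field $k(\supp F)$ whose residue field carries an archimedean ordering, and then invokes an Abhyankar-type dimension inequality between the Krull dimensions of $R[V]$, the valuation ring and its residue field to bound the rank of the valuation, hence the size of the fan. Given that this machinery is precisely the content of \cite{Br} and \cite{She} and occupies a substantial development on its own, in the present paper I would simply cite these references and use the statement as a black box rather than attempt to reprove it.
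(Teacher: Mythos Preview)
The paper does not prove this theorem: it is merely recalled from the literature with the citations \cite{Br} and \cite{She}, exactly as you anticipate in your last paragraph. Your part (a) is also what the paper does---the sentence just before the theorem observes, via the tilde correspondence $S\mapsto\widetilde S$, that the two notions of stability index agree. So your proposal is entirely in line with the paper's treatment; the additional sketch you give for part (b) (fans, the orthant lower bound, the Abhyankar-type bound on fan size) is a reasonable outline of the actual content of Br\"ocker's and Scheiderer's arguments, but the paper itself makes no attempt at this and simply uses the result as a black box.
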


Noe that in the case of finitely generated algebras over a non real closed field, the formula is not as simple. Concerning the stability index of abstract rings, we refer to \cite{ABR}.


\section{Central algebra}
From now on, the ring $A$ is assumed to be a domain with fraction field $\K(A)$. Classical real algebra is developed around the structural cone $\sum A^2$. It is a fruitful tool to make a link between algebra and the geometry of the real points of a variety. In this section, we develop a notion of central algebra in order to take into account the central points of a real variety, i.e. the Euclidean closure of the nonsingular real closed points. The central locus of a real algebraic variety has been defined in \cite{BCR} inspired by the work of Dubois \cite{Du}. This central algebra is built around the cone $\C=A\cap \sum \K(A)^2$ of the sums of squares of elements of the fraction field that belong to the ring $A$.

Note that the word {\it central} already appeared in the literature in an algebraic context : a notion of central ideal is introduced in \cite{Mnew}, a definition of central ordering is given in \cite{BP}. Our goal is to show that the central algebra unifies these notions, and is a good framework to state abstract central Positivstellens\"atze.

\subsection{Cones and orderings with support the null ideal}

In this section we are interested by describing the inverse image of the null ideal by the support maps $\CO (A)\to \ID(A)$ and $\Sp_r A\to \Sp A$. 

Remark that the cones (resp. orderings) of $\K(A)$ with support the null ideal are exactly the proper cones (resp. the orderings) of $\K(A)$. Now we aim to compare the proper cone of $\K(A)$ with the cone of $A$ with support the null ideal.

\begin{prop} \label{orderingnull}
The map $P\mapsto P\cap A$ sends injectively the proper cones (resp. the orderings) of $\K(A)$ into the cones (resp. the orderings) of the ring $A$ with support the null ideal.

The map between $\Sp_r \K(A)$ and the set of orderings of $A$ with support $(0)$ given by $P\mapsto P\cap A$ is bijective and the inverse map
	is given by $$Q\mapsto Q_{\K(A)}:=\{a/b\in\K(A)\mid ab\in Q\}.$$
\end{prop}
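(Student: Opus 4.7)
For a proper cone $P$ of the field $\K(A)$, the intersection $P\cap A$ inherits the cone axioms by restriction and contains $A^2$. The only ideals of the field $\K(A)$ are $(0)$ and $\K(A)$, and since $P$ is proper, $\supp P=(0)$, hence $\supp(P\cap A)=P\cap(-P)\cap A=(0)$. If $P$ is moreover an ordering, the ordering axiom $xy\in P\Rightarrow x\in P$ or $-y\in P$ restricts to $A$ without change, and $-1\notin P$ forces $-1\notin P\cap A$. For injectivity, if $P_1\cap A=P_2\cap A$ and $x=a/b\in P_1$ with $b\neq 0$, I would clear denominators to get $ab=x\cdot b^2\in P_1\cap A=P_2\cap A$, and then multiply back by $(1/b)^2\in P_2$ to recover $x\in P_2$.

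The delicate point in the second half is the well-definedness of $Q_{\K(A)}$. If $a/b=a'/b'$ then $ab'=a'b$ in the domain $A$, and multiplication by $bb'$ yields $(ab)b'^2=(a'b')b^2$. If $ab\in Q$ then $(a'b')b^2\in Q$; assuming for contradiction that $a'b'\notin Q$, Proposition~\ref{primeordering} gives $Q\cup -Q=A$, hence $-a'b'\in Q$, hence $(-a'b')b^2\in Q$ as well, so that $(a'b')b^2\in\supp Q=(0)$. Since $A$ is a domain and $b\neq 0$, this forces $a'b'=0\in Q$, a contradiction. Here both hypotheses, that $\supp Q=(0)$ and that $A$ is a domain, are genuinely needed.

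Once well-definedness is settled, the cone and ordering axioms for $Q_{\K(A)}$ reduce to elementary identities between representatives: $(ad+bc)(bd)=(ab)d^2+(cd)b^2$ for additivity, $(ac)(bd)=(ab)(cd)$ for multiplicativity, $(ab)^2\in Q$ for containment of squares, the ordering axiom of $Q$ applied to $(ab)(cd)\in Q$ for the ordering axiom of $Q_{\K(A)}$, and $-1\notin Q$ for properness. The two compositions return the identity: $Q_{\K(A)}\cap A=Q$ since $a\in Q_{\K(A)}$ reads $a\cdot 1\in Q$, and $(P\cap A)_{\K(A)}=P$ by the same clearing-denominators identity $a/b=ab\cdot(1/b)^2$ used for injectivity.
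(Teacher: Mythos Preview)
Your proof is correct. The paper's own proof is much terser: it simply cites Proposition~\ref{commute1} and Proposition~\ref{suppmap2} for the first part (the commutativity of the support map with the inverse-image map $\CO(\K(A))\to\CO(A)$ gives $\supp(P\cap A)=A\cap\supp(P)=(0)$), and invokes Proposition~\ref{primeordering}(3) for the second part, since that proposition already asserts that $\overline{P}=\{\overline{a}/\overline{b}\in k(\supp P)\mid ab\in P\}$ is an ordering of the residue field with $P=\varphi^{-1}(\overline{P})$, which in the case $\supp P=(0)$ is exactly the bijection claimed. Your argument unfolds these citations into direct verifications; in particular, your careful treatment of the well-definedness of $Q_{\K(A)}$ and of injectivity for cones makes explicit points that the paper leaves implicit in its references. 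The content is the same, but your version is self-contained.
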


\begin{proof}
The first point comes from the property of the support map, cf. Proposition \ref{commute1} and Proposition \ref{suppmap2}. The second point is a consequence of (3) of Proposition \ref{primeordering}.
\end{proof}

In the sequel, we identify the proper cones of $\K(A)$ with a subset of $\CO( A)$ with support the null ideal and $\Sp_r \K(A)$ with the subset of $\Sp_r A$ with support the null ideal.
We illustrate in the following example the non-surjectivity in the case of cones.

\begin{ex} \label{cubic} Let $A=\R[V]$ be the coordinate ring of the cubic curve $V$ with an isolated point, namely $A=\R[x,y]/(y^2-x^2(x-1))$. 
It is easy to see that the cone $\sum A^2$ has support the null ideal, however it does not come from a cone of $\K(A)$. Indeed, assume that $\sum A^2=P\cap A$ for a cone $P$ of $\K(A)$. We have $x-1=(y/x)^2\in \K(A)^2$ and thus $x-1\in P\cap A=\sum A^2$. It follows that $x-1$ must be nonnegative on $V(\R)$ and evaluating at the isolated point we get a contradiction.
\end{ex}

\subsection{$\C$-convex and central ideals}

We also recall the definition of central ideal introduced in \cite{Mnew}.

\begin{defn} 
Let $I$ be an ideal of $A$. Then 
$I$ is called central if $I$ is $\C$-radical. The
  central radical of $I$ is defined as
  $$\RadCe I=\{a\in A|\,\, \exists m\in\N \,\, \exists b\in \C\,\, {\rm such\,\,that}\,\,a^{2m}+b\in I
  \}.$$
We denote by $\RCent A$ (resp. $\CentMax A$) the subset of $\Sp A$ of central prime (resp. maximal) ideals.
\end{defn}

From \cite[Prop. 4.2.5]{BCR}, we know that an ideal is central if and only if it is radical and $\C$-convex.
A $\C$-convex (resp. central) ideal is $\sum A^2$-convex (resp. real) but the converse is not true as illustrated by the ideal $I=(x,y)$ in Example \ref{cubic}. Indeed we have $1+(y/x)^2=x\in I$, $1\in \C$ and $(y/x)^2\in\C$ but $1\not\in I$. So $I$ is a real ideal which is not $\C$-convex.

 By \cite[Prop. 3.14]{Mnew}, $\RadCe I$ is the intersection of the central prime ideals containing $I$ and moreover $I$ is central if and only if $I=\RadCe I$.

We give several characterizations of the existence of a central ideal in a domain.
 \begin{prop} 
\label{existcentralideal}
The following properties are equivalent:
\begin{enumerate}
\item $\K(A)$ is a formally real field.
\item $\RCent A\not=\emptyset$.
\item $A$ has a proper $\C$-convex ideal.
\item $A$ has a proper central ideal.
\item $(0)$ is a central ideal of $A$.
\end{enumerate}
\end{prop}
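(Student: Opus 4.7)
The five conditions naturally split into a short cycle supplemented by a small detour for (2). The plan is to prove
$$(1)\Rightarrow (5)\Rightarrow (4)\Rightarrow (3)\Rightarrow (1),$$
and then obtain $(5)\Rightarrow (2)\Rightarrow (4)$ to pin (2) into the cycle. The easy implications $(5)\Rightarrow (4)\Rightarrow (3)$ and $(2)\Rightarrow (4)$ are immediate: $(0)$ is a proper ideal because $A\supset\QQ$ is nonzero, a central ideal is by definition $\C$-radical and hence $\C$-convex (by \cite[Prop. 4.2.5]{BCR} as recalled in the paper), and a central prime ideal is automatically a proper central ideal.

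For $(1)\Rightarrow(5)$, assume $\K(A)$ is formally real. Take any $a\in A$ and $p\in\C$ with $a^2+p\in (0)$, i.e.\ $p=-a^2$. Since $p\in\sum\K(A)^2$, dividing by $a^2$ (if $a\neq 0$) would give $-1\in\sum \K(A)^2$, contradicting formal reality. Hence $a=0$, which is exactly the $\C$-radicality of $(0)$. For $(5)\Rightarrow(2)$, note that $A$ being a domain makes $(0)$ a prime ideal; once $(0)$ is also central, $(0)$ itself belongs to $\RCent A$, so $\RCent A\neq\emptyset$.

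The main step, and the only one requiring a genuine idea, is $(3)\Rightarrow(1)$. The key observation is that if $\K(A)$ fails to be formally real then $-1=\sum r_i^2$ with $r_i\in\K(A)$, so in particular $-1\in A\cap \sum\K(A)^2=\C$. Since also $1=1^2\in\C$, for any $\C$-convex ideal $I$ one has the identity
$$1+(-1)=0\in I$$
with both summands in $\C$. Applying $\C$-convexity forces $1\in I$, so $I=A$. Thus no proper $\C$-convex ideal can exist, contradicting (3). Consequently $\K(A)$ is formally real.

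Putting together the cycle $(1)\Rightarrow(5)\Rightarrow(4)\Rightarrow(3)\Rightarrow(1)$ together with $(5)\Rightarrow(2)\Rightarrow(4)$ yields the equivalence of the five conditions. The main obstacle, as indicated, is spotting the nice fact that $\C$ is stable under multiplication by elements of $\sum\K(A)^2\cap A$ and in particular contains $-1$ when $\K(A)$ is not formally real; everything else is a cosmetic reshuffling of definitions.
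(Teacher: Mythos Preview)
Your proof is correct and in fact more self-contained than the paper's. The paper dispatches the equivalence of (1), (2), (4), (5) by citing \cite[Prop.~3.16]{Mnew}, and then handles only the link with (3): trivially $(4)\Rightarrow(3)$, and for $(3)\Rightarrow(4)$ it shows that if $I$ is a proper $\C$-convex ideal then $\RadCe I$ is still proper (arguing that $1\in\RadCe I$ would give $1+b\in I$ with $b\in\C$, forcing $1\in I$ by $\C$-convexity). This uses the central radical machinery and the fact, recalled just before the proposition, that $\RadCe I$ is itself central.

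Your route is different and more elementary: you close the loop through $(3)\Rightarrow(1)$ directly, observing that if $\K(A)$ were not formally real then $-1\in\C$, so $1+(-1)=0$ lies in every ideal and $\C$-convexity kills properness. This avoids the central radical entirely and also avoids the external citation for the other equivalences, since your $(1)\Rightarrow(5)$ and $(5)\Rightarrow(2)$ are done by hand. The paper's approach has the mild advantage of exhibiting a concrete central ideal ($\RadCe I$) from a given $\C$-convex one, but yours is cleaner for the bare equivalence. One cosmetic remark: your closing sentence about $\C$ being ``stable under multiplication by elements of $\sum\K(A)^2\cap A$'' is not actually what you use; the only fact needed is that $-1\in A\cap\sum\K(A)^2$ when $\K(A)$ is not formally real, which is immediate.
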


\begin{proof}
The equivalence between (1), (2), (4), (5) follows from \cite[Prop. 3.16]{Mnew}. Clearly (4) implies (3). Let $I$ be a proper $\C$-convex ideal. We claim that $\RadCe I$ is also proper and the proof will be done. Assume $1\in \RadCe I$. There exists $b\in \C$ such that $1+b\in I$ and since $I$ is $\C$-convex then $1\in I$, a contradiction.
\end{proof}

In the case where $A$ is the coordinate ring of an irreducible affine algebraic variety $V$ over $R$, the existence of a central ideal is equivalent to the existence of a so-called central point. From \cite[Defn. 7.6.3]{BCR}, the central locus of $V(R)$ or $V$ and denoted by 
$\Cent V(R)$, is defined to be the closure for the Euclidean topology of the nonsingular real closed points i.e $\Cent V(R)=\overline{V_{reg}(R)}^E$. In the sequel, we say that $V$ is a central variety if $\Cent V(R)=V(R)$. It follows from the definition that a nonsingular variety is central. On the contrary, the isolated point in the cubic exhibited in Example \ref{cubic} is non-central. Note that $\Cent V(R)$ is a closed semialgebraic set since $V_{reg} (R)$ and the Euclidean closure of a semialgebraic set remains semialgebraic \cite[Prop. 2.2.2]{BCR}.

The definition of central ideals gives a new formulation of the Central Nullstellensatz stated in \cite[Cor. 7.6.6]{BCR}. 

\vskip 0,2cm\noindent\textbf{Theorem}(Central Nullstellensatz)\\ 
  Let $V$ be an irreducible affine algebraic variety over
  $R$. Then:
  $$I\subset R[V]\,\,{\rm is\,\, a\,\, central\,\, ideal}\,\, \Leftrightarrow\,\, I=\I(\Zi(I)\cap
  \Cent V(R))\,\, \Leftrightarrow\,\, I=\I(\V(I)\cap
  \Cent V(R))$$
  In particular, we have $\CentMax R[V]=\Cent V(R)$.
\vskip 0,2cm

It furnishes a tool to decide geometrically whether an ideal is central. 

\begin{ex}\label{WhitneyEx}
Let $V$ be the Whitney umbrella given by the equation $y^2=zx^2$. Then $\p=(x,y)\subset \R[V]$ is a
central prime ideal since the stick $\Zi(\p)$ of
the umbrella meets $\Cent V(\R)$ in maximal dimension.
\end{ex}

\begin{ex}\label{CartanEx} Let $V$ be the Cartan umbrella given by the equation $x^3=z(x^2+y^2)$. Then $\p=(x,y)\subset \R[V]$
is a real prime ideal but not a
central ideal by the Central Nullstellensatz since the stick $\Zi(\p)$ of
the umbrella meets $\Cent V(\R)$ in a single point. Alternatively, one can show algebraically that $\p$ is not central by the identity 
$$b=x^2+y^2-z^2=x^2+y^2-\frac{x^6}{(x^2+y^2)^2}=\frac{3x^4y^2+3x^2y^4+y^6}{(x^2+y^2)^2}\in \R[V] \cap \sum\K(V)^2.$$
Indeed $z^2+b=x^2+y^2\in\p$ but $z\not\in\p$.
\end{ex}

One goal in the paper is to generalize this Central Nullstellensatz to Central Positivstellens\"atze in order to get algebraic certificates of positivity on subsets of the central locus.

\subsection{Central cones and precentral orderings}
In this section we introduce the notion of central cones whose supports give the $\C$-convex ideals.

\begin{defn}
\label{defprecent}
A cone $P\subset A$ is called central if there exists a cone $Q$ of $\K(A)$ such that $(Q\cap A)\subset P$. We denote by $\CO_{c} (A)$ the subset of all central cones in $\CO (A)$. 

An ordering which is a central cone is called a precentral ordering.
We denote by $\Sp_{pc} A$ the subset of all precentral orderings in $\Sp_r A$. We say that $A$ is precentral if $\Sp_{pc} A=\Sp_r A$. 
\end{defn}

Since any cone of $\K(A)$ contains $\sum\K(A)^2$, it follows from the definition that a cone $P$ of $A$ is central if and only if $\C\subset P$. 
This allows one to write 
$$\Sp_{pc}A=\bigcap_{f\in \C}\{\alpha\in\Sp_r A\mid f(\alpha)\geq 0\}.$$
This shows that $\Sp_{pc}A$ is a closed subset in $\Sp_rA$ as an intersection of closed subsets. Beware that it is not necessarily a constructible set as it will be point in the sequel.

We now study the restriction of our support map to the set of central cones and show that it coincides with the set of all $\C$-convex ideals.
\begin{prop} \label{support3}
Let $\supp: \CO(A)\to\ID(A)$ be the support map. We have 
\begin{enumerate}
\item $\supp(\CO_{c} (A))$ is the set of $\C$-convex ideals.
\item $\supp(\Sp_{pc} A)=\RCent A$.
\end{enumerate}
\end{prop}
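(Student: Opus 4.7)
The plan is to treat the two items separately, with (1) following immediately from Lemma \ref{spec+conv1} and (2) requiring an extension argument through the fraction field $k(\p)$.

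For (1), recall the reformulation noted right after Definition \ref{defprecent}: a cone $P$ of $A$ is central if and only if $\C\subset P$. With this in hand, (1) is precisely Lemma \ref{spec+conv1} applied with $P$ replaced by $\C$: the cones of $A$ containing $\C$ whose support is a prescribed ideal $I$ exist if and only if $I$ is $\C$-convex, in which case $I+\C$ realizes $I$ as a support. Hence $\supp(\CO_c(A))$ is exactly the set of $\C$-convex ideals.

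For (2), the inclusion $\supp(\Sp_{pc} A)\subset \RCent A$ is essentially formal. If $\alpha\in\Sp_{pc} A$, then $\p:=\supp(\alpha)$ is a real prime ideal by Propositions \ref{primeordering} and \ref{suppmap2}. Since $\supp P_\alpha$ is always $P_\alpha$-convex and $\C\subset P_\alpha$, Lemma \ref{spec+conv0} gives that $\p$ is $\C$-convex as well. As $\p$ is prime, it is radical, so by \cite[Prop. 4.2.5]{BCR} the ideal $\p$ is $\C$-radical, i.e., central.

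For the reverse inclusion $\RCent A\subset \supp(\Sp_{pc} A)$, given $\p\in\RCent A$ I would construct a precentral ordering with support $\p$ in two steps. First, produce an ordering of the fraction field $k(\p)$ containing the image $\bar\C$ of $\C$ in $A/\p\subset k(\p)$. Applying Proposition \ref{orderingnull} to $A/\p$ then yields an ordering of $A/\p$ with support $(0)$ extending $\bar\C$, and pulling back along the quotient $A\twoheadrightarrow A/\p$ produces an ordering $P$ of $A$ with $\supp P=\p$ and $\C\subset P$, which is precentral by definition. To obtain the ordering of $k(\p)$, it suffices to verify that the cone $T$ generated by $\bar\C$ in $k(\p)$ is proper, after which Zorn's lemma (valid for extending proper cones to orderings inside a field) finishes the job.

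The crux is thus to show $-1\notin T$, and this is where I expect the main work to lie. I would argue by contradiction: a relation $-1=\sum_i(a_i/b_i)^2\bar c_i$ in $k(\p)$ with $c_i\in\C$, after clearing denominators and lifting to $A$, gives an identity of the form $\tilde b^2+\sum\tilde d_i^2 c_i\in\p$ with $\tilde b\notin\p$. Both summands lie in $\C$, so $\C$-convexity of $\p$ forces $\tilde b^2\in\p$, and then the primality (equivalently radicality) of $\p$ yields $\tilde b\in\p$, a contradiction. This is the one step where both defining properties of a central ideal ($\C$-convexity and radicality) are needed in tandem, and it is therefore the main obstacle in the argument.
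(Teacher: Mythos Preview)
Your argument is correct. For (1) and the forward inclusion in (2) you do exactly what the paper does, only with slightly more detail (the paper derives the forward inclusion in (2) in one line from (1), using that the support of an ordering is prime).

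For the reverse inclusion in (2) your route differs from the paper's. The paper simply invokes \cite[Prop.~4.3.8]{BCR}: since $\p$ is $\C$-convex and prime, that proposition directly furnishes an ordering of $A$ containing $\C$ with support $\p$. You instead unfold this step by hand: pass to $k(\p)$, check that the cone generated by the image of $\C$ is proper (this is where your contradiction using $\C$-convexity and primality of $\p$ does the real work), extend to an ordering of $k(\p)$ by Zorn, and pull back through $A\to A/\p\hookrightarrow k(\p)$. This is essentially a reproof of the cited result specialized to the cone $\C$, so your approach is more self-contained but longer, while the paper's is a one-line citation. Both give the same ordering in the end.
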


\begin{proof}
Let $P$ be a central cone. Since $\C\subset P$ then $\supp(P)$ is $\C$-convex by Lemma \ref{spec+conv0}. Let $I$ be a $\C$-convex ideal.
Since $I$ is $\C$-convex then from Lemma \ref{spec+conv1} then $I+\C$ is a cone with support equal to $I$. Since $I+\C$ is clearly a central cone then we have proved (1).

From (1) then $\supp(\Sp_{pc} A)\subset\RCent A$. To show the converse implication, assume $\p$ is a central prime ideal. Then $\p$ is $\C$-convex and we conclude by using \cite[Prop. 4.3.8]{BCR}.
\end{proof}

Looking at the example \ref{WhitneyEx} of the Whitney umbrella, it is easy to see that a cone and even an ordering with support a $\C$-convex ideal is not always central as a cone.

\begin{prop}
\label{prop8}
An ordering of $A$ with support the null ideal is precentral i.e $\Sp_r \K(A)\subset \Sp_{pc} A$.
\end{prop}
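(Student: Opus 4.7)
The plan is essentially just to unwind the definitions via the identification established in Proposition \ref{orderingnull}.

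Let $\alpha \in \Sp_r \K(A)$ with corresponding ordering $Q \subset \K(A)$. Under the identification described just after Proposition \ref{orderingnull}, this ordering is viewed as the ordering of $A$ with support $(0)$ given by $P_\alpha = Q \cap A$. To prove that $P_\alpha$ is a precentral ordering of $A$, I need to show that $P_\alpha$ is a central cone, which, by the characterization noted right after Definition \ref{defprecent}, is equivalent to the inclusion $\C \subset P_\alpha$.

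For this, I would observe that since $Q$ is in particular a cone of $\K(A)$, it contains every square, and hence $\sum \K(A)^2 \subset Q$. Intersecting with $A$ gives
\[
\C = A \cap \sum \K(A)^2 \subset A \cap Q = P_\alpha,
\]
which is exactly the needed inclusion. Alternatively, one can simply apply Definition \ref{defprecent} directly: choosing the witness cone of $\K(A)$ to be $Q$ itself, one has $Q \cap A \subset P_\alpha$ (in fact with equality), so $P_\alpha$ qualifies as a central cone.

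There is no genuine obstacle here; the proposition is a straightforward consequence of the definitions, and its real role is to record that the image of $\Sp_r \K(A)$ under the identification of Proposition \ref{orderingnull} lands inside $\Sp_{pc} A$, which will be useful for later computations involving the closure of $\Sp_r \K(A)$ in $\Sp_r A$.
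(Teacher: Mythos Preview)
Your proof is correct and essentially identical to the paper's: both invoke Proposition \ref{orderingnull} to write an ordering of $A$ with support $(0)$ as $Q\cap A$ for some $Q\in\Sp_r\K(A)$, and then use $\sum\K(A)^2\subset Q$ to conclude $\C\subset P_\alpha$. The only cosmetic difference is that the paper starts from the $A$-side (an ordering with null support) while you start from the $\K(A)$-side, but this is just the two directions of the bijection in Proposition \ref{orderingnull}.
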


\begin{proof}
Let $P\subset A$ be an ordering that $\supp(P)=(0)$. By Proposition \ref{orderingnull} then there exists $Q\in\Sp_r \K(A)$ such that $P=Q\cap A$. Since $\sum \K(A)^2 \subset Q$ then $\C \subset P$.
\end{proof}

From the example \ref{cubic} of the cubic, we know that a cone with support the null ideal is not always central i.e the statement of Proposition \ref{prop8} cannot be relaxed to cones.

One may give equivalent conditions on the existence 
of a precentral ordering:

\begin{prop}
\label{thm11}
The following properties are equivalent:
\begin{enumerate}
\item $\K(A)$ is a formally real field.
\item There is a proper cone in $\CO_{c}(A)$.
\item There exists a proper $\C$-convex ideal in $A$.
\item $\Sp_{pc} A\not=\emptyset$.
\item $\RCent A\not=\emptyset$.
\end{enumerate}
\end{prop}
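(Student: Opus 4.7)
The plan is to use Proposition \ref{existcentralideal}, which already handles the equivalences among the classical statements $(1)$, $(3)$ and $(5)$ (formally real fraction field $\Leftrightarrow$ existence of a proper $\C$-convex ideal $\Leftrightarrow$ $\RCent A\not=\emptyset$), and then to insert the new statements $(2)$ and $(4)$ into the cycle. I would close the loop by proving $(5)\Rightarrow(4)\Rightarrow(2)\Rightarrow(3)$, the first implication lifting ideals to orderings, the other two being almost immediate from the definitions.

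For $(5)\Rightarrow(4)$, I would simply invoke Proposition \ref{support3}(2), which asserts that $\supp(\Sp_{pc}A)=\RCent A$. So if $\RCent A$ is non-empty, any central prime ideal is the support of some precentral ordering, and hence $\Sp_{pc}A\not=\emptyset$. For $(4)\Rightarrow(2)$, an ordering is by construction a proper cone, and by Definition \ref{defprecent} a precentral ordering lies in $\CO_c(A)$, so any element of $\Sp_{pc}A$ produces the desired proper central cone. For $(2)\Rightarrow(3)$, let $P\in\CO_c(A)$ be proper; by Proposition \ref{support3}(1) its support $\supp(P)$ is a $\C$-convex ideal, and it is proper because $1\in\supp(P)$ would force $-1\in P$, contradicting properness. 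One then closes the cycle by applying Proposition \ref{existcentralideal} to pass from $(3)$ back to $(5)$ (and equivalently to $(1)$).

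The main obstacle is really only of organisational nature: most of the algebraic content has been packaged in Proposition \ref{existcentralideal} and in the identification $\supp(\Sp_{pc}A)=\RCent A$ of Proposition \ref{support3}(2). The latter is the key fact that makes this extension painless, because it transforms an existence statement about ideals into an existence statement about orderings; without it, one would otherwise need to invoke \cite[Prop. 4.3.8]{BCR} directly to exhibit an ordering extending a cone whose support is a given $\C$-convex prime ideal, which is exactly the ingredient hidden in the proof of Proposition \ref{support3}(2).
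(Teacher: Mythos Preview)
Your proof is correct and follows essentially the same approach as the paper, which simply cites Propositions \ref{existcentralideal}, \ref{support3} and \ref{prop8} without further detail. Your argument is in fact slightly more economical: by routing through $(5)\Rightarrow(4)$ via Proposition \ref{support3}(2) rather than through $(1)\Rightarrow(4)$ via Proposition \ref{prop8}, you avoid invoking the latter altogether.
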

\begin{proof}
One may use Propositions \ref{existcentralideal}, \ref{support3} and \ref{prop8}.
\end{proof}

Let us end this section by considering the ring $A=\R[x,y]$ which is clearly precentral, namely 
$\Sp_{pc} A=\Sp_r A$. 
However, $\sum A^2\not =\C$ (for instance consider the Motzkin polynomial)
and hence 
$\CO_{c} (A)\not=\CO( A)$.

\subsection{Central orderings}

We begin with recalling the definition of central ordering from \cite{BP}, definition which has inspired the definition \ref{defprecent} of a central cone in the preceding section.

\begin{defn}
\label{defcent}
A cone $P\in\Sp_r A$ is called central if there exists an ordering $Q\in\Sp_r \K(A))$ such that $(Q\cap A)\to P$. We denote by $\Sp_{c} A$ the subset of $\Sp_r A$ of central orderings.
We say that $A$ is central if $\Sp_{c} A=\Sp_r A$. 
\end{defn}

The notion of central ordering is a priori different from that of precentral ordering introduced in the preceding section, and studying the difference is a crucial issue in this paper.

In the geometric setting, let us see that the central spectrum is compatible with the notion of central points previously recalled. Again, this comes from results in \cite{BCR}:
\begin{prop}
	\label{alg+geomcentrality}
	Let $V$ be an irreducible affine algebraic variety over $R$. 
	Then, $$\Sp_c R[V]=\widetilde{\Cent V(R)}.$$
	Moreover, $V$ is central if and only if $R[V]$ is central.
\end{prop}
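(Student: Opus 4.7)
My plan is to pass through the tilde correspondence between semialgebraic subsets of $V(R)$ and constructible subsets of $\Sp_r R[V]$, and match two descriptions as closures. Since $\Cent V(R) = \overline{V_{reg}(R)}^E$ and the tilde map commutes with Euclidean and real-spectrum closures (\cite[Thm.~7.2.3]{BCR}, recalled in Section 2.3),
\[
\widetilde{\Cent V(R)} = \overline{\widetilde{V_{reg}(R)}}
\]
in $\Sp_r R[V]$. By Definition \ref{defcent}, $\Sp_c R[V]$ is the set of orderings of $R[V]$ that are specializations of some element of $\Sp_r \K(V)$, which coincides with $\overline{\Sp_r \K(V)}$. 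Thus the first assertion reduces to showing $\overline{\widetilde{V_{reg}(R)}} = \overline{\Sp_r \K(V)}$.

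I would establish this equality by showing that each of $\widetilde{V_{reg}(R)}$ and $\Sp_r \K(V)$ lies in the closure of the other. For $\Sp_r \K(V) \subseteq \overline{\widetilde{V_{reg}(R)}}$: given $\beta \in \Sp_r \K(V)$ and a basic open $\S(f_1,\ldots,f_k) \ni \beta$, the positivities $f_i > 0$ hold in $\beta$ as an ordering of $\K(V)$, so Artin--Lang applied to the quasi-affine Zariski-open $V_{reg}$ (which still has $\K(V)$ as function field) produces a point $x \in V_{reg}(R)$ with $f_i(x) > 0$, giving $\alpha_x \in \S(f_1,\ldots,f_k) \cap \widetilde{V_{reg}(R)}$.

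For the converse $\widetilde{V_{reg}(R)} \subseteq \overline{\Sp_r \K(V)}$: given $x \in V_{reg}(R)$ and $\S(f_1,\ldots,f_k) \ni \alpha_x$, I use smoothness to extend the evaluation ordering at $x$ to an ordering of the function field. The local ring $\mathcal{O}_{V,x}$ is regular with completion isomorphic to $R[[t_1,\ldots,t_d]]$, and any lexicographic ordering on the fraction field of $R[[t_1,\ldots,t_d]]$ restricts, via $R[V] \hookrightarrow \mathcal{O}_{V,x} \hookrightarrow R[[t_1,\ldots,t_d]]$, to an ordering $\beta \in \Sp_r \K(V)$; since each $f_i$ is a unit at $x$ with positive value, it has positive constant term as a power series and is therefore positive in $\beta$, so $\beta \in \S(f_1,\ldots,f_k) \cap \Sp_r \K(V)$.

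Taking closures yields the desired identity. For the moreover part, the injectivity of the tilde map on semialgebraic subsets, combined with $\widetilde{V(R)} = \Sp_r R[V]$, gives the equivalence of $\Cent V(R) = V(R)$ and $\Sp_c R[V] = \Sp_r R[V]$. The main obstacle is the second inclusion: the first is a direct Artin--Lang argument, whereas the second requires extending an evaluation ordering at a smooth real point to an ordering of the function field, which depends on the local-analytic input that a regular local ring with real closed residue field embeds into a power series ring over that field whose fraction field carries orderings fine enough to restrict as required.
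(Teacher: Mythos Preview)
Your overall strategy coincides with the paper's: reduce via the tilde--closure commutation to comparing the closures of $\Sp_r \K(V)$ and $\widetilde{V_{reg}(R)}$ in $\Sp_r R[V]$. The argument is essentially correct, but there is one small gap and the tactics for both inclusions differ from the paper's in ways worth noting.

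\textbf{The gap.} In your inclusion $\widetilde{V_{reg}(R)} \subseteq \overline{\Sp_r \K(V)}$ you write ``given $x \in V_{reg}(R)$\ldots'', but $\widetilde{V_{reg}(R)}$ is the full constructible set $\{\alpha : J \not\subset \supp(\alpha)\}$ (with $J$ the ideal of the singular locus), not just the closed points $\alpha_x$. Your power-series construction only handles the latter. The fix is immediate: closed $R$-points are dense in any nonempty constructible subset of $\Sp_r R[V]$ (Artin--Lang), so it suffices to check the inclusion on them; alternatively, your $\beta$ actually specializes to $\alpha_x$, and the same argument applied to a regular local ring $R[V]_\p$ with arbitrary real residue field $k(\p)$ (this is \cite[Lem.~3.4]{ABR}, used in the paper's abstract Proposition~\ref{abstract764}) handles all $\alpha\in\widetilde{V_{reg}(R)}$ at once.

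\textbf{Comparison with the paper.} For $\Sp_r \K(V) \subseteq \overline{\widetilde{V_{reg}(R)}}$ you invoke Artin--Lang to find a nearby regular closed point; the paper instead observes the \emph{direct} inclusion $\Sp_r \K(V)\subset \widetilde{V_{reg}(R)}$ ``for dimensional reasons'': an ordering with support $(0)$ cannot contain the nonzero ideal $J$ of the singular locus. This is shorter and avoids Artin--Lang entirely. For the other inclusion, the paper simply cites \cite[Prop.~7.6.2]{BCR} (every point of $\Cent V(R)$ is a specialization of an ordering of $\K(V)$), whereas you reprove the relevant special case by hand via the completion $R[[t_1,\ldots,t_d]]$; your argument is more self-contained but the paper's citation is more efficient. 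For the ``moreover'' part both proofs are essentially the same, using the bijectivity of $S\mapsto\widetilde{S}$.
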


\begin{proof}
To show the first statement, one knows from
	\cite[Prop. 7.6.2]{BCR} that if $x\in \Cent V(R)$, then 
	$x$ is the specialization of an ordering in $\Sp_r\K(V)$, in other word, 
	$\widetilde{\Cent V(R)}\subset 
	\overline{\Sp_r\K(V)}$.
	The converse inclusion comes from \cite[Prop. 7.6.4]{BCR} which we recall the argument since we will need it in an abstract setting after.

For dimensional reasons, $\Sp_r \K(V)\subset \widetilde{V_{reg}(R)}$
	and thus, using that the tilda map commutes with the closures for the Euclidean topology and the real spectrum topology, 
	we get $$\Sp_c R[V]=\overline{\Sp_r\K(V)}\subset\overline{\widetilde{V_{reg}(R)}}=\widetilde{\overline{V_{reg}(R)}^E}=\widetilde{\Cent V(R)}.$$
	It follows that $\Sp_c R[V]=\widetilde{\Cent V(R)}.$
	
	Now, let us deduce the second statement.
	Assuming that $R[V]$ is central, then $V(R)=\Sp_r R[V]\cap V(R)=\Sp_c R[V]\cap V(R)=\Cent V(R)$. Conversely, assuming that $V$ is central, namely $V(R)=\Cent V(R)$, one gets 
	$(\Sp_r R[V]\setminus\Sp_c R[V])=(\widetilde{V(R)}\setminus\widetilde{\Cent V(R)})=\widetilde{(V(R)\setminus\Cent V(R))}=\emptyset$ and hence $R[V]$ is a central domain.
\end{proof}

An alternative way of saying that an ordering $P$ is central is to say that there is $Q\in \Sp_r A$ such that $\supp(Q)=(0)$ and $Q\to P$. 
Of course, any central ordering is central as a cone and thus it is a precentral ordering. 
Moreover, $\Sp_c A=\overline{\Sp_r \K(A)}$ is naturally a closed subset of $\Sp_r A$.

Let us see now how to mimic the geometric argument motivating our definition in the abstract case. 
As usual, if $A$ is noetherian then set $\Reg A$ to be the set of all prime ideal $\p$ in $A$ such that $A_{\p}$ is a regular local ring. The complementary $\Sing A$ of $\Reg A$ in $\Sp A$ is a nonempty closed subset for the Zariski topology whenever the ring $A$ satisfied the so-called property $(J1)$ \cite[§32 B]{Ma}. Note that excellent rings satisfy this condition.

If $A$ is excellent then $\Reg A$ and $\Sing A$ are Zariski constructible subsets of $\Sp A$. One may derive the associated constructible $\widetilde{\Reg A}$ and 
$\widetilde{\Sing A}$ in $\Sp_r A$.
Namely $\widetilde{\Sing A}=\{\alpha\in \Sp_r A\mid \supp(\alpha)\in \Sing A\}$ and 
$\widetilde{\Reg A}=\Sp_r A\setminus \widetilde{\Sing A}$.

We give an abstract version of \cite[Prop. 7.6.4]{BCR}.
\begin{prop}\label{abstract764}
	Let $A$ be an excellent domain.
	Considering the closure in $\Sp_r A$, one has 
	$$\overline{\Sp_r \K(A)}= \overline{\widetilde{\Reg(A)}}$$
\end{prop}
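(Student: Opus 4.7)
My plan is to prove the two inclusions of $\overline{\Sp_r \K(A)} = \overline{\widetilde{\Reg(A)}}$ separately. The inclusion $\overline{\Sp_r \K(A)} \subset \overline{\widetilde{\Reg(A)}}$ is straightforward: since $A$ is a domain, $A_{(0)} = \K(A)$ is a field and hence a regular local ring, so $(0) \in \Reg(A)$; thus every ordering of $\K(A)$, identified via Proposition \ref{orderingnull} with an ordering of $A$ supported at $(0)$, already lies in $\widetilde{\Reg(A)}$, and passing to closures gives the inclusion.

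For the reverse direction, it suffices to show $\widetilde{\Reg(A)} \subset \overline{\Sp_r \K(A)}$ and then close on the left. I would fix $\alpha \in \widetilde{\Reg(A)}$ with $\p = \supp(\alpha)$, so that $A_\p$ is regular local of some dimension $d$, and denote by $\bar\alpha$ the induced ordering on the residue field $k(\p)$ furnished by Proposition \ref{primeordering}. Using that the real spectrum topology is generated by the basic open sets $\S(f_1, \ldots, f_k)$, the task reduces to producing, for each such open set containing $\alpha$, an ordering $\beta \in \Sp_r \K(A)$ with $f_i(\beta) > 0$ for all $i$.

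The construction of $\beta$ would proceed through a valuation-theoretic lift of $\bar\alpha$ to $\K(A)$. Since $A$ is excellent and contains $\QQ$, the local ring $A_\p$ is regular and equicharacteristic zero, so Cohen's structure theorem gives an isomorphism $\widehat{A_\p} \cong k(\p)[[x_1, \ldots, x_d]]$ for a regular system of parameters. From this presentation I can build a rank-$d$ composite discrete valuation $v$ on $\K(A) = \Frac(A_\p) \hookrightarrow \Frac(\widehat{A_\p})$ whose valuation ring dominates $A_\p$ and has residue field exactly $k(\p)$. The Baer--Krull theorem then lifts $\bar\alpha$ to an ordering $\beta$ on $\K(A)$ compatible with $v$, with the characterizing property that any $f \in \K(A)$ with $v(f) = 0$ has the same sign at $\beta$ as its residue has at $\bar\alpha$. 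The verification is then immediate: for each $f_i$ with $f_i(\alpha) > 0$ one has $f_i \notin \p$, hence $v(f_i) = 0$ and the residue of $f_i$ equals $\bar f_i$ with $\bar f_i(\bar\alpha) = f_i(\alpha) > 0$, forcing $f_i(\beta) > 0$.

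The main obstacle I expect is the construction of the rank-$d$ valuation with residue field precisely $k(\p)$ and valuation ring dominating $A_\p$; this is where the regularity of $A_\p$ and the excellence of $A$ are genuinely used, via Cohen's structure theorem. A completion-free alternative would be an induction on $d = \dim A_\p$, at each step extracting a regular parameter defining a height-one prime in the current regular local ring, localizing to obtain a DVR, and composing the resulting rank-one valuations. Once the valuation is in place, the Baer--Krull lift and the sign computation are routine.
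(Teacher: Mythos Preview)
Your proof is correct and follows the same overall architecture as the paper's: both establish $\Sp_r \K(A)\subset\widetilde{\Reg(A)}$ from the triviality that $(0)\in\Reg(A)$, and both reduce the reverse inclusion to lifting the residue ordering $\bar\alpha$ on $k(\p)$ to an ordering on $\K(A)$ through the regular local ring $A_\p$.

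The difference lies only in how that lift is obtained. The paper invokes \cite[Lem.~3.4]{ABR} as a black box: for a regular local ring of dimension $d$ with residue field $k$ and fraction field $K$, every ordering on $k$ admits $2^d$ generizations in $\Sp_r K$. You instead reconstruct this lemma by hand, passing to the completion via Cohen's theorem to exhibit a rank-$d$ discrete valuation with residue field $k(\p)$ dominating $A_\p$, and then applying Baer--Krull. This is essentially how one proves the cited lemma, so your argument is a self-contained unpacking rather than a different strategy. Two minor remarks: the $\beta$ you build does not actually depend on the chosen basic open, so you are in fact producing a single generization $\beta\to\alpha$, exactly as the paper does; and excellence is not really needed for your Cohen/Baer--Krull step (regularity of $A_\p$ suffices there), its role in the statement being rather to guarantee that $\Reg(A)$ is Zariski-open so that $\widetilde{\Reg(A)}$ is constructible.
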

\begin{proof}
	We start by showing that $\Sp_r \K(A)\subset \widetilde{\Reg(A)}$.
	
	Let us assume that $\alpha\in \Sp_r \K(A)\setminus\widetilde{\Reg(A)}$. Then, $\supp(\alpha)\in \Sing(A)$ : 
	$I\subset \supp(\alpha)$, where $\Sing A=\V(I)$. This is impossible if $\supp(\alpha)=0$.
	
	It remains to show that
	$$\widetilde{\Reg(A)}\subset \overline{\Sp_r \K(A)}$$
	
	For this, we use an analogous of $(i)\implies (iii)$ from \cite[Prop. 7.6.2]{BCR}.
	
	Let $\alpha$ be an ordering in $A$ whose support $\p$ is in $\Reg(A)$.
	Since $A$ has finite dimension and $A_{\p}$ has same fraction field as $A$, we deduce 
	the existence of $\beta\in \Sp_ r \K(A)$
	which specializes to $\alpha$, by using \cite[Lem. 3.4]{ABR} which says that, given any regular local ring $A$ of dimension $d$, of residue field $k$ and fraction field $K$, any ordering on $k$ admits $2^d$ generalizations in $\Sp_ r A$ which are orderings in $K$.
\end{proof}
With this framework in an abstract setting we recover the usual geometric properties. 

The end of the section will consist in studying the restriction of the support mapping to central orderings.
Let us start with the following:
\begin{lem}
\label{specialisation}
Let $\beta\in\Sp_c A$ with support the null ideal. Let $\q$ be a prime ideal of $A$ which is $P_\beta$-convex. Then $$P_\alpha=\q+ P_\beta$$
is a central ordering of $A$ with support $\q$ which is a specialization of $P_\beta.$
\end{lem}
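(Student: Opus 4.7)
The natural approach is to descend the problem to the quotient ring $A/\q$. Setting $\pi : A \to A/\q$ for the canonical projection, I would first prove that the image $\bar P := \pi(P_\beta)$ is an ordering of the domain $A/\q$; then $P_\alpha = \pi^{-1}(\bar P)$, which one readily identifies with $\q + P_\beta$, is automatically an ordering of $A$ as the preimage of an ordering under a ring morphism, with support $\pi^{-1}(\supp \bar P)$.

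The analysis of $\bar P$ uses the $P_\beta$-convexity of $\q$ twice. First, $\bar P$ is proper: an identity $-1 = \pi(p)$ with $p \in P_\beta$ yields $1 + p \in \q$, hence $1 \in \q$ by convexity, contradicting the primality of $\q$. Second, $\supp \bar P = (0)$: an identity $\pi(p_1) = -\pi(p_2)$ with $p_1, p_2 \in P_\beta$ yields $p_1 + p_2 \in \q$, hence $p_1, p_2 \in \q$ and $\pi(p_i) = 0$. Combined with $\bar P \cup -\bar P = A/\q$ (inherited from $P_\beta \cup -P_\beta = A$ via surjectivity of $\pi$), one is in the situation of a proper cone covering the ring up to signs and having prime support (since $A/\q$ is a domain), which is a well-known sufficient condition for being an ordering. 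Pulling back, $P_\alpha = \q + P_\beta$ is an ordering of $A$ with support $\pi^{-1}(0) = \q$.

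For the specialization, I would verify the characterization $P_\alpha \setminus -P_\alpha \subset P_\beta \setminus -P_\beta$ recalled in Section~2.2. Given $f \in P_\alpha \setminus -P_\alpha$, one has $\pi(f) \in \bar P$ and $\pi(f) \notin -\bar P$; since $P_\beta \cup -P_\beta = A$ and $\supp \bar P = (0)$, the alternative $-f \in P_\beta$ would force $\pi(f) \in -\bar P$, which is excluded. Hence $f \in P_\beta$, and a similar argument forces $-f \notin P_\beta$. Centrality of $\alpha$ then follows at once: the hypothesis $\beta \in \Sp_c A$ with $\supp \beta = (0)$ means, via Proposition \ref{orderingnull}, that $P_\beta = Q \cap A$ for some $Q \in \Sp_r \K(A)$; the specialization just verified reads $(Q \cap A) \to P_\alpha$, which is exactly Definition \ref{defcent} of a central ordering.

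The main technical subtlety is the step promoting $\bar P$ from a proper cone to an ordering; while the criterion ``proper cone + covers up to signs + prime support $\Rightarrow$ ordering'' is classical, it is not explicitly recorded in the preliminaries, so a brief two-line verification inside the proof (checking that $ab\in \bar P$ with $a\notin\bar P$ forces $-b\in\bar P$ by primality of $\supp\bar P$) is advisable.
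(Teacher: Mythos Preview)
Your proof is correct. The paper takes a slightly different, more direct route: it stays in $A$ throughout, invokes Lemma~\ref{spec+conv1} to get that $P_\alpha=\q+P_\beta$ is a cone with support $\q$ containing $P_\beta$ (hence proper, hence a specialization once it is known to be an ordering), and then observes that $P_\beta$-convexity of $\q$ forces the identity $P_\alpha=\q\cup P_\beta$ (a union, not just a sum); from this decomposition the ordering axiom is checked case by case using that $\q$ is prime and $P_\beta$ is an ordering. Your approach instead passes to $A/\q$, shows $\bar P=\pi(P_\beta)$ is a proper cone with trivial support and $\bar P\cup -\bar P=A/\q$, and appeals to the general criterion ``proper cone $+$ totality $+$ prime support $\Rightarrow$ ordering'' before pulling back. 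Both arguments use convexity at exactly the same two places (properness and triviality of the support in the quotient, versus the union identity in $A$); the paper's version is marginally shorter because specialization $P_\beta\subset P_\alpha$ is then immediate, whereas you re-derive it via the strict-positivity characterization. Your extra explicit check of centrality via Proposition~\ref{orderingnull} and Definition~\ref{defcent} is a point the paper leaves implicit, so your write-up is in fact more self-contained.
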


\begin{proof}
From Lemma \ref{spec+conv1} then $P_\alpha$ is a cone with support $\q$ which is a specialization of $P_\beta$. It follows that $P_{\alpha}$ is proper.

Since $\q$ is $P_{\beta}$-convex then we may easily show that $P_\alpha=\q\cup P_\beta$.
If $ab\in P_\alpha$ then it follows from the fact that $\q$ is prime and $P_\beta$ is an ordering that $a\in P_\alpha$ or $-b\in P_\alpha$. The proof is done.
\end{proof}

Is is not possible to differentiate the supports of precentral orderings from those of central ones:
\begin{prop} \label{support4}
Let $\supp: \Sp_r A\to\Sp A$ be the support map. We have 
$$\supp(\Sp_{c} A)=\RCent A.$$
\end{prop}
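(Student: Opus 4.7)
The plan is to establish both inclusions.

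First, $\supp(\Sp_c A) \subset \RCent A$ is immediate. By Proposition \ref{prop8} we have $\Sp_r \K(A) \subset \Sp_{pc} A$, and since $\Sp_{pc} A$ is closed in $\Sp_r A$, this gives $\Sp_c A = \overline{\Sp_r \K(A)} \subset \Sp_{pc} A$. Proposition \ref{support3} then yields $\supp(\Sp_c A) \subset \supp(\Sp_{pc} A) = \RCent A$.

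For the reverse inclusion, fix $\p \in \RCent A$. My plan is to apply Lemma \ref{specialisation}: it suffices to find $\beta \in \Sp_r \K(A)$ such that $\p$ is $(P_\beta \cap A)$-convex, because then $\p + (P_\beta \cap A)$ is a central ordering with support $\p$. By Lemma \ref{spec+conv1}, this convexity condition translates into $1+p \in P_\beta$ for every $p \in \p$ (using $1+p \neq 0$ in $\K(A)$, since $-1 \notin \p$). Each set $C_p := \{\beta \in \Sp_r \K(A) \mid (1+p)(\beta) \geq 0\}$ is closed, and since $\Sp_r \K(A)$ is quasi-compact as a spectral space, the finite intersection property reduces the problem to: for every finite $\{p_1,\dots,p_n\} \subset \p$, exhibit $\beta \in \Sp_r \K(A)$ with $(1+p_i)(\beta) > 0$ for all $i$.

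Arguing by contradiction, suppose no such $\beta$ exists. The formal Positivstellensatz (Theorem \ref{FormalPSS}) applied to the field $\K(A)$ produces an identity
\[
-1 = \sum_{I \subset \{1,\dots,n\}} \sigma_I \prod_{i\in I}(1+p_i), \qquad \sigma_I \in \textstyle\sum \K(A)^2.
\]
Writing $\prod_{i\in I}(1+p_i) = 1+q_I$ with $q_I \in \p$, and clearing a common square denominator $D^2 \in A \setminus \{0\}$ (so that $s_I := D^2 \sigma_I \in \sum A^2$), this becomes the identity in $A$
\[
D^2 + \sum_I s_I = -\sum_I s_I q_I \in \p.
\]
Since $D^2$ and each $s_I$ lie in $\sum A^2 \subset \C$, the $\C$-convexity of $\p$ forces $D^2 \in \p$, hence $D \in \p$ by primality.

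The main obstacle is that $D \in \p$ is not by itself a contradiction, since elements of $\sum \K(A)^2$ may legitimately require denominators in $\p$. To close this step my plan is to rerun the argument inside the localization $A_\p$: one first verifies that $\p A_\p$ remains $\C(A_\p)$-convex in $A_\p$ (for $c = a/s \in \C(A_\p)$ with $s \notin \p$, the identity $cs^2 = as$ places $as$ in $\C$, and the $\C$-convexity of $\p$ in $A$ transfers); then inside $A_\p$ one exploits the locality to reduce the output identity modulo the maximal ideal $\p A_\p$ and uses the formal reality of the residue field $\K(A/\p)$ to derive a true contradiction with $D \in \p A_\p$. This establishes the finite intersection property, produces the desired $\beta$, and completes the proof.
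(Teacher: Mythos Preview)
Your first inclusion is fine and agrees with the paper. For the reverse inclusion the paper simply invokes \cite[Prop.~4.2.9]{BCR}, which directly produces an ordering $P'\in\Sp_r\K(A)$ with $\p$ $(P'\cap A)$-convex, and then applies Lemma~\ref{specialisation}. You are essentially trying to reprove that proposition from scratch, and the attempt has two gaps.

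First, the translation in your step~2 is not what Lemma~\ref{spec+conv1} says, and it is actually false for non-maximal primes. The condition ``$1+p\in P_\beta$ for all $p\in\p$'' only says $\p\subset m_\beta$ (the infinitesimals of $\beta$), which is strictly weaker than $\p$ being $(P_\beta\cap A)$-convex. For instance take $A=\RR[x,y]$, $\p=(x)$, and $\beta$ the ordering where $x,y$ are positive infinitesimals with $y\ll x$: every element of $(x)$ is infinitesimal, yet $y$ and $x-y$ are positive with sum $x\in(x)$ while $y\notin(x)$. This gap is repairable by passing to $A_\p$, where $\p A_\p$ is maximal and your translation does become valid (via Lemma~\ref{spec+conv1}, since properness of $(P_\beta\cap A_\p)+\p A_\p$ forces its support to equal the maximal ideal $\p A_\p$).

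Second, and more seriously, your step~8 does not close even in $A_\p$. After the Positivstellensatz and clearing denominators you get $-D^2=\sum_I s_I+\sum_I s_I q_I$ with $s_I\in\sum A_\p^2$ and $q_I\in\p A_\p$. Reducing modulo $\p A_\p$ gives $-\bar D^2=\sum_I\bar s_I$ in $k(\p)$, and formal reality of $k(\p)$ yields $\bar D=0$ and $\bar s_I=0$. But this only \emph{reconfirms} $D\in\p A_\p$; it is not a contradiction. There is no reason the common denominator $D$ of the $\sigma_I$'s can be chosen outside $\p$: the rational functions produced by the Positivstellensatz may genuinely have poles along $\p$, and localizing at $\p$ does not eliminate such denominators. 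Nothing in the formal reality of $k(\p)$ prevents this.

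The standard proof of the needed fact (as in \cite[Prop.~4.2.9]{BCR}) does not go through compactness and the Positivstellensatz at all: one applies Zorn's lemma to the set of cones $T'\supset\sum\K(A)^2$ of $\K(A)$ for which $\p$ is $(T'\cap A)$-convex, and then shows by the usual $a/-a$ dichotomy that a maximal such cone is an ordering. That is the missing idea.
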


\begin{proof}
From Proposition \ref{support3} 
one has $\supp(\Sp_{c} A)\subset \RCent A$.

Assume $\p\in\RCent A$. By \cite[Prop. 4.2.9]{BCR} there exists an ordering $P'\in\Sp_r \K(A)$ such that $\p$ is $(P'\cap A)$-convex. To end the proof use Proposition \ref{orderingnull} and Lemma \ref{specialisation} where $\beta$ is associated to $(P'\cap A)$ and $\q=\p$.
\end{proof}

This allows us to say that the existence of a central ordering, or in other words the fact that 
$\Sp_c A\not=\emptyset$, is equivalent to the conditions given
in Proposition \ref{thm11}.

\section{Central versus precentral}\label{CentvsPreccent}

This section is the heart of the paper. We aim to compare central and precentral orderings, orderings which have the same supports by Propositions \ref{support3} and \ref{support4}.
Since a precentral ordering of $A$ contains a proper cone of $\K(A)$ and a central ordering contains an ordering of $\K(A)$, it follows that a central ordering is precentral as previously noted. However the converse implication does not hold, and the goal of this section is to study the difference.  

We assume in the sequel that $\K(A)$ is a formally real field since otherwise we do not have any precentral nor central ordering.

First note that for closed points of varieties, both notions coincide.
\begin{prop}
\label{closedpoints}
Let $V$ be an irreducible affine algebraic variety over $R$ and let $x\in V(R)$. Then $\alpha_x$ is central if and only if $\alpha_x$ is precentral.
\end{prop}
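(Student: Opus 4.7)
The plan is to reduce the statement to the question of whether the maximal ideal $\mathfrak{m}_x=\supp(\alpha_x)$ is a central ideal, and then invoke the Central Nullstellensatz together with Proposition \ref{alg+geomcentrality}. The easy direction (central implies precentral) has already been observed in general just after Definition \ref{defcent}, so only the converse requires work.

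Assume $\alpha_x$ is precentral, that is, $\C\subset P_{\alpha_x}$, which means $f(x)\geq 0$ for every $f\in\C=R[V]\cap\sum\K(V)^2$. The key observation is then that the maximal ideal $\mathfrak{m}_x$ is automatically $\C$-convex: indeed, if $p_1,p_2\in\C$ satisfy $p_1+p_2\in\mathfrak{m}_x$, then $p_1(x)+p_2(x)=0$ while both summands are nonnegative, forcing $p_1(x)=p_2(x)=0$ and hence $p_1,p_2\in\mathfrak{m}_x$. Since $\mathfrak{m}_x$ is maximal it is already radical, so by the characterization of central ideals as radical $\C$-convex ideals recalled after Proposition \ref{existcentralideal} we conclude that $\mathfrak{m}_x$ is central, i.e.\ $\mathfrak{m}_x\in\CentMax R[V]$.

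Applying the Central Nullstellensatz recalled above (so that $\CentMax R[V]=\Cent V(R)$) yields $x\in\Cent V(R)$. Finally Proposition \ref{alg+geomcentrality} identifies $\Sp_c R[V]$ with $\widetilde{\Cent V(R)}$, and as $\alpha_x$ is precisely the element of $\Sp_r R[V]$ attached to the closed point $x$, this inclusion $x\in\Cent V(R)$ translates into $\alpha_x\in\Sp_c R[V]$, namely $\alpha_x$ is central.

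I do not see any serious obstacle in this closed-point setting; the argument is almost entirely formal once the correct tools are lined up. The only slightly delicate point is the $\C$-convexity of $\mathfrak{m}_x$, but this follows from one line using only the definition of precentrality, which is exactly why the statement is simpler than the general comparison between central and precentral orderings studied in the rest of the section (where supports can be non-maximal and the convexity+radicality argument breaks down).
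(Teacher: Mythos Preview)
Your proof is correct. Both you and the paper first establish that $\mathfrak{m}_x$ is a central ideal: the paper simply invokes Proposition~\ref{support3}, while you re-prove this special case by hand via the one-line $\C$-convexity argument. The difference lies in the second half. The paper stays inside the real spectrum: since $R[V]/\mathfrak{m}_x=R$ is real closed, $\alpha_x$ is the \emph{unique} ordering with support $\mathfrak{m}_x$; then Proposition~\ref{support4} furnishes some central ordering with that same support, which by uniqueness must be $\alpha_x$ itself. You instead pass through geometry, using the Central Nullstellensatz to get $x\in\Cent V(R)$ and then Proposition~\ref{alg+geomcentrality} to translate back. Your route is perfectly valid in the variety setting; the paper's uniqueness argument has the advantage of immediately generalizing to the abstract statement of Proposition~\ref{ringclosedpoints} (where the only hypothesis is that the residue field admits a unique ordering), without any appeal to geometric input.
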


\begin{proof}
Assume $\alpha_x$ is precentral.  By Proposition \ref{support3} then $\m_x=\supp(\alpha_x)\in \RCent R[V]$. Since $R[V]/\m_x=R$ is real closed, $\alpha_x$ is the unique ordering of $\Sp_r R[V]$ with support $\m_x$ then it follows from Proposition \ref{support4} that $\alpha_x$ is central.
\end{proof}

One may readily generalize to an abstract setting:
\begin{prop}
	\label{ringclosedpoints}
	Let $\alpha\in \Sp_r A$ be such that the residue field
	$k(\supp(\alpha))$ admits a unique ordering. Then, $\alpha$ is central if and only if $\alpha$ is precentral.
\end{prop}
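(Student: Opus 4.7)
The plan is to generalize the argument of Proposition \ref{closedpoints} almost verbatim, the only change being the justification for why an ordering of $A$ is determined by its support under the given hypothesis. One direction is free: a central ordering is always precentral, as remarked right after Definition \ref{defcent}. So the content lies in the converse.

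Assume $\alpha$ is precentral. First I would invoke Proposition \ref{support3} to conclude that $\p := \supp(\alpha)$ belongs to $\RCent A$. Then, by Proposition \ref{support4}, the support map $\Sp_c A \to \RCent A$ is surjective, so there exists some central ordering $\beta \in \Sp_c A$ with $\supp(\beta) = \p$. The goal is now to show $\alpha = \beta$.

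This is where the hypothesis on $k(\p)$ enters. By Proposition \ref{primeordering}(3), the correspondence $P \mapsto \overline{P}$ sends any ordering of $A$ with support $\p$ to an ordering of the residue field $k(\p)$, and the ordering $P$ can be recovered as the pullback of $\overline{P}$ along the canonical map $A \to k(\p)$. Consequently, the fiber of $\supp : \Sp_r A \to \Sp A$ over $\p$ is in bijection with $\Sp_r k(\p)$. By hypothesis this latter set is a singleton, hence $\alpha = \beta$, and so $\alpha \in \Sp_c A$.

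There is no real obstacle here; the proof is essentially a one-line assembly of Propositions \ref{primeordering}, \ref{support3} and \ref{support4}. The only point worth emphasizing in the write-up is that the uniqueness of the lift of an ordering from $k(\p)$ to $A$ is exactly the content of Proposition \ref{primeordering}(3), which is what makes the geometric argument of Proposition \ref{closedpoints} (where $k(\m_x) = R$ is real closed, hence uniquely ordered) generalize without change to this abstract setting.
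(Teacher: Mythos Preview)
Your proof is correct and follows exactly the approach the paper intends: the paper gives no separate proof for this proposition, presenting it as the immediate abstract generalization of Proposition \ref{closedpoints}, and your argument spells out precisely that generalization using Propositions \ref{primeordering}(3), \ref{support3} and \ref{support4}.
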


Since $\Sp_{c} A\subset \Sp_{pc} A\subset\Sp_r A$, we already know that $A$ is precentral whenever $A$ is central and the latter condition is satisfied for instance when $A$ is the coordinate ring of an irreducible affine algebraic variety over $R$ which is central (see Proposition \ref{alg+geomcentrality}).

Our aim is to give characterizations of central and precentral orderings. To start, let us note that, since 
$\Sp_cA$ and $\Sp_{pc} A$ are closed subsets, a point $\alpha$ belongs to 
$\Sp_c A$ (resp. $\Sp_{pc} A$) if and only if $U\cap \Sp_c A\not=\emptyset$ (resp. $U\cap \Sp_{pc} A\not=\emptyset$) for any open subset $U$ containing $\alpha$. One may replace in that statement $U$ with basic open subsets like $\tilde{\S}(f_1,\ldots,f_k)$ which are a basis of neighbourhoods.

\begin{lem} \label{equivcentralbasis}
	Let $\alpha\in\Sp_rA$ and $f_1,\ldots,f_k$ in $A\setminus\{0\}$ such that $\alpha\in\S(f_1,\ldots,f_k)$.
	Let us consider the following  properties:
	\begin{enumerate}
		\item $A\cap \sum \K(A)^2[f_1,\ldots,f_k]\subset P_\alpha$.
		\item $\sum \K(A)^2[f_1,\ldots,f_k]$ is proper in $\K(A)$.
		\item $A\cap \sum \K(A)^2[f_1,\ldots,f_k]$ is proper in $A$.
		\item $\S(f_1,\ldots,f_k)\cap \Sp_c A\not=\emptyset$.
		\item $\S(f_1,\ldots,f_k)\cap \Sp_{pc} A\not=\emptyset$.
	\end{enumerate}

One has $(1)\implies (2)\iff (3)\iff (4)\implies (5)$.
\end{lem}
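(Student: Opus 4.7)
The plan is to prove the chain $(1)\Rightarrow(2)\iff(3)\iff(4)\Rightarrow(5)$ separately, relying mainly on the identification of Proposition \ref{orderingnull} between $\Sp_r\K(A)$ and the orderings of $A$ with null support, together with the characterization of specialization recalled after Proposition \ref{primeordering}.

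The implications $(1)\Rightarrow(2)$ and $(2)\iff(3)$ are elementary. For $(1)\Rightarrow(2)$, if $-1$ were to lie in $\sum\K(A)^2[f_1,\ldots,f_k]$, then since $-1\in A$ we would have $-1\in A\cap\sum\K(A)^2[f_1,\ldots,f_k]\subset P_\alpha$, contradicting the properness of $P_\alpha$. The equivalence $(2)\iff(3)$ follows instantly from the same remark that $-1\in A$, so that $-1$ belongs to $\sum\K(A)^2[f_1,\ldots,f_k]$ precisely when it belongs to its intersection with $A$.

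For $(2)\Rightarrow(4)$, invoke Zorn's lemma (equivalently, apply Theorem \ref{FormalPSS} to $H=\{f_1,\ldots,f_k\}$ with empty monoid and empty ideal) to extend the proper cone $\sum\K(A)^2[f_1,\ldots,f_k]$ in $\K(A)$ to an ordering $Q\in\Sp_r\K(A)$. Each $f_i$ lies in $Q$ and, since $f_i\neq 0$ in $\K(A)$, one has $f_i>0$ at $Q$. Via Proposition \ref{orderingnull}, $Q\cap A\in\Sp_rA$ has support $(0)$ and lies in $\Sp_r\K(A)\subset\overline{\Sp_r\K(A)}=\Sp_cA$; it still satisfies $f_i(Q\cap A)>0$, giving the desired element of $\S(f_1,\ldots,f_k)\cap\Sp_cA$.

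The converse $(4)\Rightarrow(2)$ is where the main care is needed. Let $\gamma\in\S(f_1,\ldots,f_k)\cap\Sp_cA$. By Definition \ref{defcent} there exists $\beta\in\Sp_r\K(A)$ which, regarded through Proposition \ref{orderingnull} as an ordering of $A$ with null support, specializes to $\gamma$. The characterization $(P_\gamma\setminus-P_\gamma)\subset(P_\beta\setminus-P_\beta)$ of specialization then transports the strict positivity $f_i\in P_\gamma\setminus\supp P_\gamma$ to strict positivity of $f_i$ at $\beta$ in $\K(A)$. Hence $\sum\K(A)^2[f_1,\ldots,f_k]$ is contained in the ordering of $\K(A)$ defined by $\beta$, and is therefore proper. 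Finally, $(4)\Rightarrow(5)$ is immediate from the inclusion $\Sp_cA\subset\Sp_{pc}A$.

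The only conceptual obstacle is to keep the two viewpoints of $\beta$ (as an ordering of $\K(A)$ and as an ordering of $A$ with support $(0)$) coherent throughout; once this identification and the specialization characterization are used, the argument reduces to transporting strict inequalities between $\beta$ and $\gamma$.
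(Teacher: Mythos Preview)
Your proof is correct and essentially matches the paper's. The only cosmetic difference is in $(2)\Rightarrow(4)$: you argue directly by extending the proper cone in $\K(A)$ to an ordering via Zorn (or equivalently Theorem~\ref{FormalPSS}), while the paper argues by contraposition, assuming $\S(f_1,\ldots,f_k)\cap\Sp_c A=\emptyset$ and applying Theorem~\ref{FormalPSS} in $\K(A)$ to produce an identity $1+p=0$ with $p\in\sum\K(A)^2[f_1,\ldots,f_k]$; these are the same argument read in opposite directions.
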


\begin{proof}
The equivalence between (2) and (3) is clear.
	We prove (2) implies (4). Assume that $\S(f_1,\ldots,f_k)\subset (\Sp_r A\setminus\Sp_c A)$. It follows that $\{\beta\in\Sp_r\K(A)\mid f_1(\beta)> 0,\ldots,f_k(\beta)> 0\}=\emptyset$.
	
Since the $f_i$ are non zero, one may equivalently say that $\{\beta\in\Sp_r\K(A)\mid f_1(\beta)\geq 0,\ldots,f_k(\beta)\geq 0\}=\emptyset$. 
	By the Positivstellensatz recalled in Theorem \ref{FormalPSS}, one gets an identity $1+p=0$ in $\K(A)$ with $p\in\sum\K(A)^2[f_1,\ldots,f_k]$. It follows that $-1\in A\cap \sum\K(A)^2[f_1,\ldots,f_k]$ and it proves that (2) implies (4) by contraposition.
	
	We prove (4) implies (2). Let $\alpha\in \S(f_1,\ldots,f_k)\cap \Sp_c A$. There exists $\beta\in \Sp_r A$ such that $\supp(\beta)=(0)$ and 
	$\beta\to\alpha$. We have $\beta\in\Sp_r \K(A)$ by Proposition \ref{orderingnull}. For $i=1,\ldots,k$, we have $f_i\in P_\alpha\setminus\supp(\alpha)$ and thus $f_i\in P_\beta\setminus\{0\}$. It follows that 
	$\sum\K(A)^2[f_1,\ldots,f_k]\subset P_\beta$ (viewed in  $\K(A)$) and thus $\sum\K(A)^2[f_1,\ldots,f_k]$ is proper.

	We prove (1) implies (2). Assume that $\sum\K(A)^2[f_1,\ldots,f_k]$ is not proper. It follows that $-1\in A\cap \sum\K(A)^2[f_1,\ldots,f_k]$ and since $-1\not\in P_\alpha$ we get that $A\cap \sum\K(A)^2[f_1,\ldots,f_k] \not\subset P_\alpha$.
	
	Since $\Sp_c A\subset\Sp_{pc} A$ then (4) implies (5).
\end{proof}

The point (2) does not necessarily imply (1) as one can see looking for example at a point distinct from the origin in the stick of the Cartan umbrella (Example \ref{CartanEx}) and $k=0$. Likewise (3) implies (1) does not hold in general, nevertheless it becomes true after quantification on the family $f_1,\ldots, f_k$ and we derive the following characterizations for central orderings:

\begin{prop} \label{charcentralgen}
	Let $\alpha\in\Sp_rA$. The following properties are equivalent:
	\begin{enumerate}
		\item $\alpha\in\Sp_cA$.
		\item For any $f_1,\ldots,f_k\in A$ such that $\alpha\in\S(f_1,\ldots,f_k)$, the cone $\sum\K(A)^2[f_1,\ldots,f_k]$ is proper in $\K(A)$.
		\item For any $f_1,\ldots,f_k\in A$ such that $\alpha\in\S(f_1,\ldots,f_k)$, the cone $A\cap\sum\K(A)^2[f_1,\ldots,f_k]$ is proper in $A$.
		\item For any $f_1,\ldots,f_k\in A$ such that $\alpha\in\S(f_1,\ldots,f_k)$, the intersection $\S(f_1,\ldots,f_k)\cap \Sp_c A$ is non-empty.
		\item For any $f_1,\ldots,f_k\in A$ such that $\alpha\in\S(f_1,\ldots,f_k)$, the cone $A\cap \sum\K(A)^2[f_1,\ldots,f_k]\subset P_\alpha$.
		\end{enumerate}
\end{prop}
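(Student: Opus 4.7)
The plan is to apply Lemma~\ref{equivcentralbasis} pointwise for every family $(f_1,\ldots,f_k)$ with $\alpha\in\S(f_1,\ldots,f_k)$: universal quantification immediately yields the equivalences $(2)\iff(3)\iff(4)$ and the implication $(5)\Rightarrow(2)$. It therefore remains to establish $(1)\iff(4)$ and the crucial $(4)\Rightarrow(5)$.

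The implication $(1)\Rightarrow(4)$ is tautological, since if $\alpha\in\Sp_cA$ then $\alpha$ itself belongs to $\S(f_1,\ldots,f_k)\cap\Sp_cA$. For the converse $(4)\Rightarrow(1)$, the basic open sets $\S(f_1,\ldots,f_k)$ containing $\alpha$ form a fundamental system of neighbourhoods of $\alpha$ in $\Sp_rA$, so $(4)$ says exactly that every such neighbourhood meets $\Sp_cA$, that is, $\alpha$ lies in the closure of $\Sp_cA$; since $\Sp_cA=\overline{\Sp_r\K(A)}$ is closed in $\Sp_rA$, this forces $\alpha\in\Sp_cA$.

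The main step is $(4)\Rightarrow(5)$, which I would prove by contradiction. Suppose that some $g\in A\cap\sum\K(A)^2[f_1,\ldots,f_k]$ fails to lie in $P_\alpha$ for a family with $\alpha\in\S(f_1,\ldots,f_k)$. Since $P_\alpha$ is an ordering, $-g\in P_\alpha\setminus\supp(P_\alpha)$, hence $\alpha\in\S(f_1,\ldots,f_k,-g)$. Applying $(4)$ to this enlarged family produces some $\beta\in\Sp_cA$ at which $f_1,\ldots,f_k$ and $-g$ are all strictly positive. By definition of a central ordering, $\beta$ is a specialization of some $\gamma\in\Sp_r\K(A)$, and strict positivity transfers from $\beta$ to $\gamma$ via the description of specialization through strictly positive elements. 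Hence $f_i(\gamma)>0$ and $-g(\gamma)>0$; viewing $\gamma$ as an ordering of the field $\K(A)$, we obtain $\sum\K(A)^2[f_1,\ldots,f_k]\subset P_\gamma$, so $g\geq 0$ at $\gamma$, contradicting $-g>0$ at $\gamma$.

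The main obstacle is precisely this last implication, and its resolution exploits a feature unavailable to a purely algebraic argument: condition $(4)$ delivers a genuine central ordering, which by definition lifts to a $\K(A)$-ordering where the strict positivity of $-g$ is preserved. A proof relying only on the properness of the cone $\sum\K(A)^2[f_1,\ldots,f_k,-g]$ in $\K(A)$ would not suffice, as it would not rule out the degenerate case in which $g$ falls into the support of the ordering we construct.
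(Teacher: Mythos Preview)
Your proof is correct and follows essentially the same skeleton as the paper: Lemma~\ref{equivcentralbasis} gives $(2)\iff(3)\iff(4)$ and $(5)\Rightarrow(2)$, and the closedness of $\Sp_cA$ gives $(4)\Rightarrow(1)$. The one organizational difference is how $(5)$ is reached. The paper proves $(1)\Rightarrow(5)$ directly: since $\alpha$ is already central, take $\beta\in\Sp_r\K(A)$ with $\beta\to\alpha$, observe $\sum\K(A)^2[f_1,\ldots,f_k]\subset P_\beta$ in $\K(A)$, and conclude $A\cap\sum\K(A)^2[f_1,\ldots,f_k]\subset P_\beta\subset P_\alpha$. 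Your route $(4)\Rightarrow(5)$ goes by contradiction, enlarging the basic open with $-g$ and then invoking $(4)$ on the larger family to manufacture a central $\beta$; this works perfectly well and is in fact the same lifting idea, but it is slightly more elaborate than necessary because you do not yet know $\alpha$ itself is central at that stage. Once you have established $(1)\iff(4)$, the direct argument $(1)\Rightarrow(5)$ is available and avoids the enlargement trick.
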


\begin{proof}
The equivalence between (2), (3) and (4) is given by Lemma \ref{equivcentralbasis}.

	Let us prove that (1) implies (5). Let $\alpha\in\Sp_cA$. There exists $\beta\in \Sp_r A$ such that $\supp(\beta)=(0)$ and 
	$\beta\to\alpha$. We have $\beta\in\Sp_r \K(A)$ by Proposition \ref{orderingnull}. Let $f_1,\ldots,f_k\in P_\alpha\setminus\supp(\alpha)$ then $\forall i$, $f_i\in P_\beta\setminus\{0\}$ and it follows that 
	$\sum\K(A)^2[f_1,\ldots,f_k]\subset P_\beta$  (viewed in  $\K(A)$) and thus $\sum\K(A)^2[f_1,\ldots,f_k]$ is proper. Consequently,
	$$A\cap \sum\K(A)^2[f_1,\ldots,f_k]\subset P_{\beta}\subset P_\alpha.$$

	By Lemma \ref{equivcentralbasis}, we have that (5) implies (2).

	As already said, we know that (4) implies (1) since $\Sp_c A$ is closed for the topology 
	of $\Sp_r A$.
\end{proof}

Here appears the notion of stability index we recalled in section \ref{StabIndex}. Namely, the stability index $\sta(U)$ of an open subset $U$ of $\Sp_r A$ is the infimum of the numbers $k\in\N$ such that for any basic open subset $S$ of $\Sp_r A$ satisfying $S\subset U$ there exist $f_1,\ldots,f_k$ in $A$ with $S=\S(f_1,\ldots,f_k)$.

It leads to new characterizations for central orderings:
\begin{thm} \label{charcentral}
	Let $\alpha\in\Sp_rA$. The following properties are equivalent:
	\begin{enumerate}
		\item $\alpha\in\Sp_cA$.
		\item For any $f_1,\ldots,f_k\in A$ such that $\alpha\in\S(f_1,\ldots,f_k)$ and $k\leq \sta(\Sp_r A\setminus\Sp_c A)$, the cone $\sum\K(A)^2[f_1,\ldots,f_k]$ is proper in $\K(A)$.
		\item For any $f_1,\ldots,f_k\in A$ such that $\alpha\in\S(f_1,\ldots,f_k)$ and $k\leq \sta(\Sp_r A\setminus\Sp_c A)$, the cone $A\cap \sum\K(A)^2[f_1,\ldots,f_k]$
		is proper in $A$. 
        \item For any $f_1,\ldots,f_k\in A$ such that $\alpha\in\S(f_1,\ldots,f_k)$ and $k\leq \sta(\Sp_r A\setminus\Sp_c A)$, the intersection $\S(f_1,\ldots,f_k)\cap \Sp_c A$ is non-empty.
		\item For any $f_1,\ldots,f_k\in A$ such that $\alpha\in\S(f_1,\ldots,f_k)$ and $k\leq \sta(\Sp_r A\setminus\Sp_c A)$, the cone $A\cap \sum\K(A)^2[f_1,\ldots,f_k]\subset P_\alpha$.		
	\end{enumerate}
\end{thm}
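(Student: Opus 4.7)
The plan is to refine Proposition \ref{charcentralgen} by exploiting the stability index of the open set $U := \Sp_r A \setminus \Sp_c A$. Set $s := \sta(U)$ for brevity. I first dispose of the easy direction: each of conditions $(2)$--$(5)$ in the theorem is obtained from the corresponding condition in Proposition \ref{charcentralgen} by \emph{adding} the extra restriction $k \leq s$ on the family size. These are therefore weaker statements, and so $(1)$ implies each of them as an immediate consequence of Proposition \ref{charcentralgen}.

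For the converse directions, my approach is contraposition. Suppose $\alpha \notin \Sp_c A$. Since $\Sp_c A = \overline{\Sp_r \K(A)}$ is closed in $\Sp_r A$, its complement $U$ is open and contains $\alpha$. The basic open sets form a basis of the real spectrum topology, so one can pick a basic open neighborhood $S$ of $\alpha$ with $S \subset U$. The crucial input is now the definition of the stability index: any such basic open $S$ with $S \subset U$ admits a presentation $S = \S(f_1, \ldots, f_k)$ with $k \leq s$. Note also that $\alpha \in \S(f_1, \ldots, f_k)$ forces $f_i(\alpha) > 0$, hence each $f_i$ is nonzero; this makes Lemma \ref{equivcentralbasis} applicable to this family.

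The inclusion $S \subset U$ translates into $\S(f_1,\ldots,f_k) \cap \Sp_c A = \emptyset$, which is the failure of condition $(4)$ in Lemma \ref{equivcentralbasis}. The equivalences $(2) \Leftrightarrow (3) \Leftrightarrow (4)$ from that lemma then give the failures of its $(2)$ and $(3)$, which directly contradict $(2)$ and $(3)$ of the theorem. The implication $(1) \Rightarrow (2)$ in the lemma forces the failure of the lemma's $(1)$, contradicting condition $(5)$ of the theorem. Finally, the identity $S \cap \Sp_c A = \emptyset$ directly negates $(4)$ of the theorem. Each of the converse implications $(2) \Rightarrow (1)$, $(3) \Rightarrow (1)$, $(4) \Rightarrow (1)$, $(5) \Rightarrow (1)$ is thereby established by contradiction.

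There is no real difficulty to anticipate once the stability index is introduced; the only point requiring care is to confirm that the definition of $\sta(U)$ indeed guarantees a presentation with at most $s$ functions for \emph{every} basic open subset contained in $U$ (not just some), which is precisely what allows us to restrict the range of $k$ in Proposition \ref{charcentralgen} to $k \leq s$. Concretely, the main obstacle is purely bookkeeping: tracking that the family $f_1,\ldots,f_k$ obtained from the stability index is the same family that appears in conditions $(2)$--$(5)$, and that it satisfies the hypotheses of Lemma \ref{equivcentralbasis}.
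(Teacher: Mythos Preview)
Your proof is correct and follows essentially the same approach as the paper: both rely on Proposition \ref{charcentralgen} for the forward direction, and on the definition of the stability index together with Lemma \ref{equivcentralbasis} to close the loop. The paper organizes the equivalences as a chain $(1)\Leftrightarrow(4)$, $(1)\Rightarrow(5)$, $(5)\Rightarrow(2)\Leftrightarrow(3)\Leftrightarrow(4)$, whereas you argue $(1)\Rightarrow$ each of $(2)$--$(5)$ and then prove each converse separately by contraposition, but the substance is the same.
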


\begin{proof}
We know that (1) is equivalent to (4) of Proposition \ref{charcentralgen}. By definition of the stability index, (4) of Proposition \ref{charcentralgen} is equivalent to (4): indeed, if $\alpha\in 
S$ with $S$ a basic open subset of $\Sp_r A$ which cannot be described by less than $\sta(\Sp_r A\setminus\Sp_c A)+1$ inequalities, then we must have 
$S\cap \Sp_c A\not=\emptyset$. By Proposition \ref{charcentralgen}, (1) implies (5). By Lemma \ref{equivcentralbasis}, (5) implies (2) and (2), (3), (4) are equivalent.
\end{proof}

Note that if the stability index $\sta(\Sp_r A\setminus\Sp_c A)$ happens to be zero and more generally if $k=0$ in Theorem \ref{charcentral}, then 
$\sum\K(A)^2[f_1,\ldots,f_k]$ reduces to 
$\sum\K(A)^2$ as the cone in $\K(A)$ generated by the empty family. In this case, assertion (5) of Theorem \ref{charcentral} is equivalent to say that $P_{\alpha}$ is precentral.

We also notice that being a precentral ordering is equivalent to satisfy condition (5) of Theorem \ref{charcentral} only for $k=0$. Then, we give similar characterizations for precentral orderings, namely, one recover conditions (2), (3) and (4) of Theorem \ref{charcentral} for $k\leq 1$.
 
\begin{thm} \label{charprecentral}
Let $\alpha\in\Sp_r A$. The following properties are equivalent:
\begin{enumerate}
\item $\alpha\in\Sp_{pc} A$.
\item For any $f\in A$ such that $\alpha\in\S(f)$, the cone $\sum\K(A)^2[f]$ is proper.
\item For any $f\in A$ such that $\alpha\in\S(f)$, the cone $A\cap\sum\K(A)^2[f]$ is proper and precentral.
\item For any $f\in A$ such that $\alpha\in\S(f)$, the intersection $\S(f)\cap \Sp_c A$ is non-empty.
\end{enumerate}
\end{thm}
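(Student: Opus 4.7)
The plan is to deduce the equivalences $(2)\Leftrightarrow(3)\Leftrightarrow(4)$ directly from Lemma \ref{equivcentralbasis} applied with $k=1$ to each $f$ such that $\alpha\in\S(f)$, and then to close the cycle by separately proving $(1)\Rightarrow(2)$ and $(4)\Rightarrow(1)$. The three middle conditions require no further glue, so the only substantive work concerns those two remaining implications.

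For $(1)\Rightarrow(2)$, I assume $\alpha\in\Sp_{pc}A$ and $\alpha\in\S(f)$, and suppose for contradiction that $\sum\K(A)^2[f]$ is not proper in $\K(A)$. By Theorem \ref{FormalPSS} applied to $\K(A)$ with $H=\{f\}$ and monoid generator $b_1=f$ (equivalently, by Artin's theorem in a field), this non-properness says exactly that no $\beta\in\Sp_r\K(A)$ satisfies $f(\beta)\geq 0$, and hence that $-f(\beta)>0$ for every such $\beta$. A second application of Artin then gives $-f\in\bigcap_\beta P_\beta=\sum\K(A)^2$, and since $f\in A$ this puts $-f$ into $\C=A\cap\sum\K(A)^2$. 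Precentrality of $\alpha$ now yields $\C\subset P_\alpha$, hence $-f\in P_\alpha$, contradicting $f(\alpha)>0$.

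For $(4)\Rightarrow(1)$, I verify $\C\subset P_\alpha$ directly. Take a nonzero $c\in\C$ (the zero case is trivial); if $c\notin P_\alpha$ then $-c\in P_\alpha\setminus\supp(\alpha)$, so $\alpha\in\S(-c)$. By (4) there exists $\beta\in\S(-c)\cap\Sp_c A$. Every central ordering is precentral: any $Q\in\Sp_r\K(A)$ automatically contains $\sum\K(A)^2$, so its intersection with $A$ contains $\C$, and any $A$-specialization inherits this inclusion. Consequently $c\in\C\subset P_\beta$, giving $c(\beta)\geq 0$. But $\beta\in\S(-c)$ forces $c(\beta)<0$, a contradiction. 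Hence $\C\subset P_\alpha$ and $\alpha$ is precentral.

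I expect the only delicate step to be the Artin argument in $(1)\Rightarrow(2)$: one must recognise that the failure of properness of $\sum\K(A)^2[f]$ in the fraction field produces a sum-of-squares certificate for $-f$ over $\K(A)$, which then belongs to $\C$ and clashes with the precentrality of $\alpha$. Everything else is straightforward unwinding of the definition of precentrality together with the standing inclusion $\Sp_c A\subset\Sp_{pc}A$.
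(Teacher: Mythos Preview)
Your proof is correct and follows essentially the same approach as the paper. The only cosmetic difference is that the paper closes the cycle via $(1)\Leftrightarrow(4)$ (both directions by contrapositive), whereas you prove $(1)\Rightarrow(2)$ and $(4)\Rightarrow(1)$; but the key ideas---that a failure of properness forces $-f\in\sum\K(A)^2\subset\C$, and that a central ordering cannot lie in $\S(-c)$ for $c\in\C$---are identical to the paper's.
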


\begin{proof}
The equivalence between (2), (3) and (4) are clear from Lemma \ref{equivcentralbasis}.

Assume $\alpha\not\in \Sp_{pc} A$. There exists $g\in A\cap\sum\K(A)^2$ such that $g\not\in P_\alpha$. Remark that $g\not=0$. We have $-g\in (P_\alpha\setminus\supp(\alpha))$  and thus $\alpha\in\S(-g)$. 
Suppose there exists $\beta\in(\Sp_c A\cap \S(-g))$. By definition of a central ordering and by Proposition \ref{orderingnull}, there is $\gamma\in\Sp_r A$ such that $\supp(\gamma)=(0)$ and $\gamma\to\beta$. Since $(P_\beta\setminus\supp(\beta))\subset(P_\gamma\setminus\{0\})$ then $-g\in (P_\gamma\setminus\{0\})$, it is impossible because $g\in A\cap\sum\K(A)^2\subset P_\gamma$.
We get $\S(-g)\subset (\Sp_r A\setminus\Sp_c A)$ and it proves (4) implies (1).

Assume there exists $g\in A$ such that $\alpha\in\S(g)$ and $\S(g)\subset (\Sp_r A\setminus\Sp_c A)$. We have $-g\not\in P_\alpha$. Moreover $\forall \beta\in \Sp_cA$, $-g\in P_\beta$ and thus $\forall \beta\in \Sp_cA$ such that $\supp(\beta)=(0)$, $-g\in P_\beta$. From Proposition \ref{orderingnull} and the Positivstellensatz, cf. Theorem \ref{FormalPSS}, we get $-g\in\sum\K(A)^2$. It shows that $\alpha\not\in\Sp_{pc} A$ and it proves (1) implies (4).
\end{proof}

With this characterization and the one in Theorem \ref{charcentral}, one may view precentral orderings as central orderings of "level $1$", and going further in that direction would lead to the consideration of central orderings of "level $k$". We decide not to develop such a formalism until we find some relevant applications.

The value of the stability index of the non-central locus appears to be related to the existence of a precentral ordering that is not central. Namely, one has 
$\Sp_{pc} A=\Sp_c A$ whenever $\sta(\Sp_r A\setminus\Sp_c A)\leq 1$.

In the geometric case, using Bröcker-Scheiderer Theorem and Theorem \ref{charcentral}, one gets a family of geometric rings where any precentral ordering is central :
\begin{cor} \label{centraldim2}
	Let $V$ be an irreducible affine algebraic variety over $R$ such that $\dim V\leq 2$. Then, $\Sp_{pc} R[V]=\Sp_cR[V]$.
\end{cor}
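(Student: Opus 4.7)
The plan is to invoke the observation stated just before the corollary, namely that $\Sp_{pc} A = \Sp_c A$ whenever $\sta(\Sp_r A \setminus \Sp_c A) \leq 1$. Applied to $A = R[V]$, the problem reduces to establishing the stability-index bound $\sta(\Sp_r R[V] \setminus \Sp_c R[V]) \leq 1$ under the hypothesis $\dim V \leq 2$.

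First I would translate to the geometric side via Proposition \ref{alg+geomcentrality}, which gives $\Sp_c R[V] = \widetilde{\Cent V(R)}$. Since the tilda correspondence preserves complements (and constructibility is preserved under the correspondence), the open subset $\Sp_r R[V] \setminus \Sp_c R[V]$ corresponds to the semialgebraic set $V(R) \setminus \Cent V(R)$. By the very definition of the central locus, $V_{reg}(R) \subset \Cent V(R)$, so $V(R) \setminus \Cent V(R) \subset V_{sing}(R)$. Irreducibility of $V$ forces $\dim V_{sing} < \dim V$, hence $\dim(V(R) \setminus \Cent V(R)) \leq \dim V - 1 \leq 1$.

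Next I would apply Bröcker-Scheiderer to obtain the stability-index bound. Any basic open $\S(f_1, \ldots, f_k) \subset \Sp_r R[V] \setminus \Sp_c R[V]$ corresponds to a basic open semialgebraic set $S(f_1, \ldots, f_k)$ of $V(R)$ entirely contained in $V(R) \setminus \Cent V(R)$, hence of dimension at most $1$. Let $W$ be the Zariski closure of $V(R) \setminus \Cent V(R)$ in $V$, so $\dim W \leq 1$. Viewed as a basic open in $W(R)$, the set $S(f_1, \ldots, f_k)$ is described by $\leq \dim W \leq 1$ inequalities by Bröcker-Scheiderer applied to $R[W]$, and a suitable lift should recover $\S(f_1, \ldots, f_k) = \S(g)$ in $\Sp_r R[V]$ for some $g \in R[V]$. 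This yields $\sta(\Sp_r R[V] \setminus \Sp_c R[V]) \leq 1$ and, combined with the opening observation, the desired equality $\Sp_{pc} R[V] = \Sp_c R[V]$.

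The main obstacle is the lifting step in the final paragraph: a naive lift of a defining polynomial $\bar g \in R[W]$ to $g \in R[V]$ may cut out a strictly larger semialgebraic set on $V(R)$ than $S(f_1, \ldots, f_k)$, since it is only constrained on $W(R)$. Overcoming this requires either modifying the lift by sums of squares vanishing on $W$ to neutralize any extraneous positivity on $V(R) \setminus W(R)$, or appealing to a refined form of Bröcker's theorem which bounds the number of defining inequalities of a basic open semialgebraic set directly by its own dimension rather than that of the ambient variety.
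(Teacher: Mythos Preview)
Your overall strategy matches the paper's exactly: reduce to the stability-index bound $\sta(\Sp_r R[V]\setminus\Sp_c R[V])\leq 1$, pass to the geometric side, and observe that the complement of the central locus sits inside the singular locus, which has dimension at most $1$.

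The difference is that the paper does not take your detour through the Zariski closure $W$ and the subsequent lifting. It simply writes
\[
\sta(\Sp_r R[V]\setminus\Sp_c R[V])\leq \sta(\widetilde{V(R)\setminus V_{reg}(R)})
\]
and then invokes Br\"ocker--Scheiderer in the refined form you yourself name in your last sentence: a basic open semialgebraic subset $S$ of $V(R)$ can be described by at most $\dim S$ strict inequalities \emph{with coefficients in $R[V]$}. Since any basic open contained in $V(R)\setminus V_{reg}(R)$ has semialgebraic dimension at most $1$, this gives the bound directly, and no lifting from $R[W]$ to $R[V]$ is needed. So the ``main obstacle'' you flag is an artifact of applying Br\"ocker--Scheiderer to the auxiliary variety $W$ rather than to the basic open set inside $V(R)$; the second of your two suggested remedies is precisely what the paper does, and it dissolves the obstacle entirely.
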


\begin{proof}
	One has $\sta(\Sp_r R[V]\setminus\Sp_c R[V])\leq \sta(\widetilde{V(R)\setminus V_{reg}(R)})$. And from Br\"ocker-Scheiderer Theorem one gets that the stability index of $V(R)\setminus V_{reg}(R)$ is at most $1$.
\end{proof}

Let us now give an example of a precentral ordering which is not central.
\begin{ex}
\label{contre-ex}
Let $V$ be the irreducible affine algebraic variety over $\R$ with coordinate ring $A=\R[V]=\R[x,y,z,t_1,t_2]/(z^2+t_1x^2+t_2y^2)$. The real part of the singular locus of $V$ is contained in the real plane in $t_1$ and $t_2$ and we are going to describe $S=V(\R)\setminus (\Cent V(\R))$ in this plane. By \cite[Prop. 7.6.2]{BCR}, $S$ is the locus of points of $V(\R)$ where the local semi-algebraic dimension is $<4$. Seeing $V(\R)$ as a variety with parameters $t_1$ and $t_2$ then we can show that 
$S=S(t_1,t_2)$, the open right upper quadrant of the plane, and the local dimension at points
of $S$ in $V(\R)$ is equal to two. 
Let us consider the four elements of the real spectrum $-1_{+\uparrow},-1_{+\downarrow},1_{+\uparrow},1_{+\downarrow}$ which, as described in \cite[Ex. 10.4.3]{BCR}, have support the ideal $(x,y,z)$ (which define the plane in $t_1,t_2$), the first two specializing to the point $(-1,0)$ and the second two specializing to the point $(1,0)$. These four orderings define a fan $F$ (see \cite[Defn. 10.4.2]{BCR}). 
It is easy to check that $F\cap \widetilde{S}=\{1_{+\uparrow}\}$.  
Since $1_{+\uparrow}\in \widetilde{S}$ then it cannot be central. 
This can also be seen algebraically by considering the property (5) of Theorem \ref{charcentral} since $t_i>0$ on $1_{+\uparrow}$ for $i=1,2$ and since $-1=t_1(x/z)^2+t_2(y/z)^2\in\sum\K(V)^2[t_1,t_2]$.

Assume now that there exists $f\in\R[V]$ such that $1_{+\uparrow}\in\S(f)$. If we assume $\S(f)\subset \widetilde{S}$, then $\#(F\cap \S(f))=1$ and by \cite[V Cor. 1.9]{ABR} we get a contradiction. It follows that $\#(F\cap \S(f))>1$. Since $F\setminus\{1_{+\uparrow}\}\subset \Sp_c A$ then using (4) of Theorem \ref{charprecentral} we get that $1_{+\uparrow}\in \Sp_{pc} A$.

To end, let us note that it is possible to give a similar example in dimension 3 by intersecting our variety with the hypersurface with equation $z-xy=0$.
\end{ex}

This example shows also that the precentral spectrum is not necessarily constructible.
Indeed, if $\Sp_{pc}\R[V]$ were a constructible subset, then, by the correspondence between semialgebraic subsets $S$ and constructible subsets $\tilde{S}$ and using Proposition \ref{closedpoints}, one would get $\Sp_{pc}\R[V]=\widetilde{\Sp_{pc} \R[V]\cap V(\R)}=\widetilde{\Sp_{c} \R[V]\cap V(\R)}=\Sp_c \R[V]$ a contradiction.

Although the central spectrum and the precentral spectrum seem to be close, it is not true that for any $\alpha\in\Sp_{pc} A$, there exists $\gamma\in\Sp_c A$ such that $\alpha\to\gamma$ as one can see using again the previous example \ref{contre-ex}.
Indeed, take $\alpha\in\Sp_{pc} A$ 
	be the ordering of all polynomial functions which are nonnegative on a $+\infty$ neigbourhood of the transcendant curve of equation $t_2=e^{-t_1}$ in the plane $(t_1,t_2)$.
	This ordering does not admit any strict specialization and it is not central since the non central locus is $\widetilde{S}(t_1 ,t_2)$.
	Moreover, arguing again with a 4-elements fan (which cannot intersect any given principal open subset at a single element), one shows that $\alpha$ is precentral.

From (4) of Theorem \ref{charprecentral}, a precentral ordering of $\Sp_r A$ is an ordering that cannot be separated from the central locus $\Sp_c A$ by principal open subsets. In the spirit of Example \ref{contre-ex}, it is possible to create precentral but non-central orderings with an higher level of non-separation with the central locus. Namely, let $k$ be an integer $\geq 2$ and let $V$ be the irreducible affine algebraic variety over $\R$ with coordinate ring $\R[V]=\R[x_1,\ldots,x_k,z,t_1,\ldots,t_k]/(z^2+t_1x_1^2+\ldots+t_kx_k^2)$. There exists 
$\alpha\in(\Sp_{pc} A\setminus\Sp_c A) $ such that for any basic open subset $S$ of $\Sp_r A$ given by $t\leq k-1$ strict inequalities then $S\cap \Sp_c A\not=\emptyset$. Moreover
we have $\alpha\in\S(t_1,\ldots,t_k)\subset (\Sp_{r} A\setminus\Sp_c A)$.

\vskip0,5cm
In the equivalent properties of Theorem \ref{charprecentral}, recall that we get rid of the condition (5) of Theorem \ref{charcentral}:
\begin{enumerate}
\item[(5)] $\forall f\in A$ such that 
$\alpha\in \S(f)$, $A\cap\sum\K(A)^2[f]\subset P_\alpha$.
\end{enumerate}

Moreover, there is another condition which arises naturally as the following one:
\begin{enumerate}
	\item[(6)] $\forall f\in A$ such that 
	$\alpha\in \S(f)$, $\exists \beta\in\Sp_c A$ with $\supp(\alpha)=\supp(\beta)$ such that $\beta\in\S(f)$.
\end{enumerate}
Indeed, by Theorem \ref{charprecentral} we know that 
$$\alpha\in\Sp_{pc} A\iff \forall f\in A\,\, {\rm such\,\, that}\,\, \alpha\in\S(f), \exists \beta\in\Sp_c A\,\, {\rm such \,\, that}\,\, \beta\in\S(f),$$
and by Propositions \ref{support3} and \ref{support4} we also know that
$$\alpha\in\Sp_{pc} A\implies \exists \beta\in\Sp_c A\,\, {\rm such\,\, that}\,\, \supp(\alpha)=\supp(\beta).$$
So, a natural question is study how $\alpha\in\Sp_{pc} A$ is related to (6).

\begin{prop}Let $\alpha\in\Sp_r A$.
Then, condition (6) is equivalent to	the following 
		\begin{center}
$\forall f\in A$ such that 	$\alpha\in \S(f)$, $\supp(\alpha)$ is $A\cap\sum\K(A)^2[f]$-convex.
		\end{center}
Moerover, one has the following implications 
	$$\alpha\in\Sp_c A\implies (5) \implies (6) \implies \alpha\in\Sp_{pc} A.$$
\end{prop}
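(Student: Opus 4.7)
The plan is to prove the equivalence of (6) with the convexity condition first, and then deduce the chain of implications through this equivalence. The only nontrivial step is the direction ``convexity $\Rightarrow (6)$'', which is handled by the same application of \cite[Prop.~4.2.9]{BCR} used in the proof of Proposition \ref{support4}, combined with Lemma \ref{specialisation}.

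For $(6)\Rightarrow$ convexity, fix $f \in A$ with $\alpha \in \S(f)$ and use $(6)$ to obtain $\beta \in \Sp_c A$ with $\supp\beta = \p := \supp\alpha$ and $\beta \in \S(f)$. Centrality of $\beta$ means $T := \{\gamma \in \Sp_r\K(A) : \gamma \to \beta\}$ is nonempty; for any $\gamma_0 \in T$, $\beta \in \overline{\{\gamma_0\}} \subset \overline{T}$, so the open neighborhood $\S(f)$ of $\beta$ meets $T$. Pick $\gamma \in T \cap \S(f)$. Viewing $\gamma$ as an ordering of $\K(A)$ via Proposition \ref{orderingnull}, we have $\sum\K(A)^2[f] \subset P_\gamma$, and restricting to $A$ then using $\gamma \to \beta$ gives $A \cap \sum\K(A)^2[f] \subset P_\gamma \cap A \subset P_\beta$. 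Since $\p$ is $P_\beta$-convex, Lemma \ref{spec+conv0} yields the $(A\cap\sum\K(A)^2[f])$-convexity of $\p$.

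For convexity $\Rightarrow (6)$, fix $f$ with $\alpha \in \S(f)$ and assume $\p$ is $(A\cap\sum\K(A)^2[f])$-convex. The cone $P := A\cap\sum\K(A)^2[f]$ is proper: indeed $1 \in P$, so $-1 \in P$ would give $1+(-1) = 0 \in \p$ and, by convexity, $-1 \in \p$, contradicting the properness of $\p$. The argument used in the proof of Proposition \ref{support4} now applies verbatim with $P$ in place of $\C$: \cite[Prop.~4.2.9]{BCR} produces $P' \in \Sp_r\K(A)$ with $P \subset P' \cap A$ and $\p$ being $(P'\cap A)$-convex. Lemma \ref{specialisation}, applied with this $P'$ and $\q = \p$, then produces the central ordering $\beta := \p + (P'\cap A)$ of support $\p$. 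Since $f \in P \subset P_\beta$ and $f \notin \p$, we have $\beta \in \S(f)$, confirming $(6)$.

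The remaining implications are quick. The implication $\alpha \in \Sp_c A \Rightarrow (5)$ is the case $k=1$ of $(1)\Rightarrow(5)$ in Proposition \ref{charcentralgen}. For $(5)\Rightarrow(6)$: condition (5) together with Lemma \ref{spec+conv0} yield the convexity condition, so $(6)$ holds by the equivalence above. For $(6)\Rightarrow\alpha\in\Sp_{pc}A$, take $g \in \C$; if $g \in \p$ then $g \in P_\alpha$, and otherwise, supposing $-g \in P_\alpha \setminus \p$ gives $\alpha \in \S(-g)$ and $(6)$ produces $\beta \in \Sp_c A$ with $\supp\beta = \p$ and $-g \in P_\beta \setminus \p$; but $\Sp_c A \subset \Sp_{pc} A$ forces $g \in \C \subset P_\beta$, whence $g \in \supp\beta = \p$, a contradiction. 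Thus $\C \subset P_\alpha$. The main obstacle is the convexity $\Rightarrow (6)$ direction, which depends on the refinement of \cite[Prop.~4.2.9]{BCR} already used in Proposition \ref{support4}: the production of an ordering of $\K(A)$ that both contains a prescribed cone of $A$ and makes a prescribed prime of $A$ convex for its restriction to $A$.
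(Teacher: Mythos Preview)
Your proof is correct and follows essentially the same route as the paper's: the equivalence of (6) with the convexity condition hinges on \cite[Prop.~4.2.9]{BCR} together with Lemma \ref{specialisation} in one direction, and on lifting to a generization $\gamma$ of $\beta$ with $f(\gamma)>0$ in the other, while the chain of implications is argued identically (the paper simply invokes Theorem \ref{charprecentral} for the last step where you verify $\C\subset P_\alpha$ by hand). One cosmetic remark: your topological detour through $\overline{T}$ to locate some $\gamma\in T\cap\S(f)$ is unnecessary, since any $\gamma$ with $\gamma\to\beta$ and $\beta\in\S(f)$ automatically satisfies $f(\gamma)>0$.
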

\begin{proof}
Let us show that condition (6) implies the one of the proposition. Let $f\in A$ and $\p\in\Sp A$. Assume there exists $\beta\in\Sp_c A$ with $\p=\supp(\beta)$ such that $\beta\in\S(f)$. By Proposition \ref{support4} then $\p\in\RCent A$. There exists $\gamma$ of support $(0)$ such that $\gamma\to \beta$. We have $f(\gamma)>0$ and thus $\sum\K(A)^2[f]\subset P_{\gamma}$ in $\K(A)$. Since $\p$ is $P_{\beta}$-convex 
then $\p$ is convex for $P_{\gamma}$ and hence also for $A\cap\sum\K(A)^2[f]$. 

Conversely, take $\alpha\in\Sp_r A$ such that
$\supp(\alpha)=\p$, $\alpha\in \S(f)$ and $\p$ is $A\cap\sum\K(A)^2[f]$-convex. By \cite[Prop. 4.2.9]{BCR} there exists an ordering $\gamma\in\Sp_r\K(A)$ such that 
$\p$ is $P_{\gamma}\cap A$-convex. By Propositions \ref{orderingnull} and \ref{specialisation} there exists $\beta\in\Sp_r A$ such that $P_{\gamma}\cap A\to P_{\beta}$ and $\supp(\beta)=\p$. Clearly $\beta\in\Sp_c A$ and since $A\cap\sum\K(A)^2[f]\subset P_{\beta}$ then $f(\beta)\geq 0$. Since $\alpha\in \S(f)$ and $\supp(\beta)=\supp(\alpha)=\p$ then $\beta\in \S(f)$. We have shown the first assertion.

Let us now prove the implications.
The first one comes directly from characterizations of Proposition \ref{charcentralgen}.
The second one relies on the fact that $\supp(\alpha)$ is $P_\alpha$-convex and if
$A\cap\sum\K(A)^2[f])\subset P_\alpha$, then 
$\supp(\alpha)$ is $A\cap\sum\K(A)^2[f]$-convex.
And the last implication is also immediate using Theorem \ref{charprecentral}. 
\end{proof}

Example \ref{contre-ex} allows us to study converse implications. 
Namely, the precentral ordering $\alpha=1_{+\uparrow}$ considered there satisfies condition (6) (since $1_{+\downarrow}\in\Sp_c \RR[V]$) but not condition (5).
Indeed, one has identity $-1=t_1(x/z)^2+t_2(y/z)^2\in\sum\K(V)^2[t_1,t_2]$. Multiplying by $t_1$, one gets $-t_1\in\RR [V]\cap\sum\K(V)^2[t_1t_2]$ but
$\alpha\in\S(t_1t_2)$ and $-t_1\not\in P_\alpha$.

It happens also that the converse of the first implication doesn't hold. Indeed, consider the slighly modified example:
$A=\R[V]=\R[x_1,x_2,x_3,z,t_1,t_2,t_3]/(z^2+t_1x_1^2+t_2x_2^2+t_3 x_3^2)$.
Let us take $\beta\in\Sp A\setminus \Sp_c A=\S(t_1,t_2,t_3)$ such that $\beta\in\Sp_{pc}A$. Then, any basic open defined by two or less inequalities which contains $\beta$ has to intersect $\Sp_c A$.
Let us argue by contradiction and assume that $\beta$ does not satisfy property (5). Then, there is $f\in A$ such that $\beta\in\S(f)$
and $A\cap\sum \K(A)^2[f]\not\subset P_\beta$. One has an identity $a=s+ft\notin P_\beta$ with $s,t\in\sum \K(A)^2$, namely 
$\beta\in \S(f,-a)$ and hence 
$\S(f,-a)$ intersects $\Sp_cA$. Let 
$\beta'\in\S(f,-a)\cap \Sp_c A$, there exists $\gamma$ of support $(0)$ such that $\gamma\to \beta'$.
Relatively to the ordering $\gamma$, one gets $s+ft> 0$ whereas $a<0$, a contradiction. Hence $\beta$ satisfies (5) and gives a counterexample to the first implication.
And concerning the converse of the last implication, we did not succeed to prove or disprove it.

To end this section whose aim was to compare central and precentral orderings, we recall that a central domain is obviously precentral and we prove that the converse is also true in the geometric case.
\begin{prop}
\label{precentimpcentvar}
Let $V$ be an irreducible affine algebraic variety over $R$. The following properties are equivalent:
\begin{enumerate}
\item $V$ is central.
\item $R[V]$ is central.
\item $R[V]$ is precentral.
\end{enumerate}
\end{prop}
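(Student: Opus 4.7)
The equivalence $(1)\iff (2)$ is exactly the second statement of Proposition \ref{alg+geomcentrality}, so it suffices to show the equivalence $(2)\iff(3)$.

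The implication $(2)\implies(3)$ is immediate: every central ordering is precentral, so if $\Sp_c R[V]=\Sp_r R[V]$ then $\Sp_{pc} R[V]$, which is sandwiched between these two sets, must equal $\Sp_r R[V]$ as well.

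For the converse $(3)\implies(1)$, I will argue that precentrality at closed points is already enough to force geometric centrality of $V$. Suppose that $R[V]$ is precentral and take any $x\in V(R)$. Then $\alpha_x\in\Sp_r R[V]=\Sp_{pc} R[V]$, so $\alpha_x$ is precentral. Since the residue field at the maximal ideal $\m_x$ is $R$, which is real closed (and admits a unique ordering), Proposition \ref{closedpoints} (or more generally Proposition \ref{ringclosedpoints}) yields that $\alpha_x$ is in fact central, i.e.\ $\alpha_x\in\Sp_c R[V]$. Invoking the first statement of Proposition \ref{alg+geomcentrality}, we have $\Sp_c R[V]=\widetilde{\Cent V(R)}$, and intersecting with $V(R)$ gives $\Sp_c R[V]\cap V(R)=\Cent V(R)$. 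Therefore $x\in\Cent V(R)$, and since $x\in V(R)$ was arbitrary we conclude that $V(R)=\Cent V(R)$, i.e.\ $V$ is central.

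The key point, and arguably the only non-formal step of the argument, is the reduction to closed points via Proposition \ref{closedpoints}: at such points the real residue field is $R$, which eliminates the ambiguity between central and precentral orderings. Once this is available, the rest is a matter of threading together the bijective correspondence between semialgebraic and constructible subsets and the identification $\Sp_c R[V]=\widetilde{\Cent V(R)}$ coming from \cite[Prop. 7.6.2 and 7.6.4]{BCR}.
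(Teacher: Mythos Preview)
Your proof is correct and follows essentially the same route as the paper: reduce to $(3)\implies(1)$ via Proposition \ref{alg+geomcentrality}, then use Proposition \ref{closedpoints} on closed points together with the identification $\Sp_c R[V]=\widetilde{\Cent V(R)}$ to conclude $V(R)=\Cent V(R)$. The only difference is that you spell out the intersection $\Sp_c R[V]\cap V(R)=\Cent V(R)$ explicitly, whereas the paper leaves this implicit.
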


\begin{proof}
By Proposition 
\ref{alg+geomcentrality} 
we are left to prove (3) implies (1). Assume $R[V]$ is precentral. We have $\Sp_r R[V]=\Sp_{pc} R[V]=\widetilde{V(R)}$ and thus $\Sp_{pc} R[V]\cap V(R)=V(R)$. Using Proposition \ref{closedpoints} then $\Sp_{pc} R[V]\cap V(R)=\Sp_{c} R[V]\cap V(R)$ and the proof is done.
\end{proof}

\section{Applications: Central and precentral Positivstellens\"atze} 

Unless otherwise stated, $A$ is a domain with fraction field $\K(A)$ which is  formally real.


\subsection{Around the Hilbert 17th problem}

Real geometers have always tried to give certificates of positivity for different types of functions. The most famous of these certificates is undoubtedly that of the 17th problem. Hilbert thus wanted to characterize the polynomials of $A=R[x_1,\ldots,x_n]$ which are nonnegative on $R^n$. 
Since such a polynomial is not necessarily a sum of squares in $R[x_1,\ldots,x_n]$ which one may reformulate saying that, if $n\geq 2$, the intersection of all cones in $A$ is strictly contained in the intersection of all orderings in $A$, namely $$\bigcap_{P\in\CO(A)}P\subsetneq\bigcap_{P\in\Sp_r A} P.$$ 
The Artin answer to Hilbert 17th problem says that a nonnegative polynomial is a sum of squares in $\K(A)=R(x_1,\ldots,x_n)$ : the trace on $A$ of the intersection of all cones of $\K(A)$ coincide with the intersection of all orderings in $A$, namely:
$$A\cap\bigcap_{P\in\CO(\K(A))}P=A\cap\bigcap_{P\in\Sp_r \K(A)} P=\bigcap_{P\in\Sp_r A} P.$$

We are interested by getting central Hilbert 17th properties for a general domain $A$, namely finding what kind of positivity is given by the algebraic certificate of an $f$ which belongs to the cone $\C=A\cap\sum \K(A)^2$.

In this direction, the classical Hilbert 17th property can be reformulated with the language of central cones and orderings of $A$. Since $A=R[x_1,\ldots,x_n]$ is a central and precentral ring, 
the following sequence of inclusions 
$$\C=\bigcap_{P\in\CO_{c}(A)}P\subset \bigcap_{P\in\Sp_{pc} A} P\subset \bigcap_{P\in\Sp_c A} P,$$ are in fact equalities.

In the sequel we give some central or precentral certificates of vanishing (Nullstellens\"atze) and of positivity (Positivstellens\"atze) on several subsets of $\Sp_r A$.

\subsection{Central and precentral Nullstellens\"atze}

We start by studying certificates of vanishing. 
Let $I$ be an ideal of $A$. Denote by $\Zi^r(I)$ the set of all $\alpha\in\Sp_r A$ such that $I\subset \supp (\alpha)$. Then we write $\Zi^c(I)=\Zi^r(I)\cap \Sp_c A$ and $\Zi^{pc}(I)=\Zi^r(I)\cap \Sp_{pc} A$. 
Let $W\subset\Sp_r A$, we denote by $\I(W)$ the set of $f\in A$ such that $W\subset \Zi^r (f)$; it is clearly an ideal of $A$.

Note that the inclusion $\Zi^c(I)\subset \Zi^{pc}(I)$ can be strict, take $I=(0)$ in Example \ref{contre-ex}.
One may nevertheless go further to get an abstract central and precentral Nullstellens\"atze:
\begin{prop}\label{CentralNSS} (Central and precentral Nullstellens\"atze):\\
	Let $I$ be an ideal of $A$. One has $$\I(\Zi^c(I))=\I(\Zi^{pc}(I))=\RadCe{I}.$$
\end{prop}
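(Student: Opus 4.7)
The plan is to reduce everything to the already established fact (stated just before Proposition \ref{existcentralideal}, citing \cite[Prop. 3.14]{Mnew}) that $\RadCe{I}$ equals the intersection of the central prime ideals of $A$ containing $I$. Once this reduction is made, the proposition will follow from the surjectivity statements already recorded for the support map.

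First I would translate $\I(W)$ into support language. For any subset $W\subset\Sp_r A$ and any $f\in A$, the condition $W\subset\Zi^r(f)$ amounts to $f\in\supp(\alpha)$ for every $\alpha\in W$, whence
$$\I(W)=\bigcap_{\alpha\in W}\supp(\alpha).$$
Applying this to $W=\Zi^c(I)$ and $W=\Zi^{pc}(I)$, everything then depends on understanding the image of $\supp$ restricted to these two sets.

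Next I would show that in both cases this image is exactly $\{\p\in\RCent A\mid I\subset\p\}$. One inclusion is obvious: if $\alpha$ lies in $\Zi^c(I)$ or $\Zi^{pc}(I)$, then $\supp(\alpha)$ is a central prime (by Proposition \ref{support3} and Proposition \ref{support4}, respectively) which contains $I$. For the reverse inclusion, given a central prime $\p$ containing $I$, Proposition \ref{support4} (resp. Proposition \ref{support3}) produces an $\alpha\in\Sp_c A$ (resp. $\alpha\in\Sp_{pc} A$) with $\supp(\alpha)=\p$; such an $\alpha$ automatically satisfies $I\subset\supp(\alpha)$ and therefore lies in $\Zi^c(I)$ (resp. $\Zi^{pc}(I)$). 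Combining, I obtain
$$\I(\Zi^c(I))=\I(\Zi^{pc}(I))=\bigcap_{\p\in\RCent A,\,I\subset\p}\p,$$
and the cited characterization of $\RadCe{I}$ as the intersection of the central primes over $I$ finishes the proof.

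There is no serious obstacle here: everything is a packaging of the support-image descriptions in Propositions \ref{support3} and \ref{support4} together with the known prime-intersection formula for the central radical. The only point requiring minimal care is making sure that the support set of $\Zi^c(I)$ (and of $\Zi^{pc}(I)$) is exactly the set of central primes containing $I$, which is precisely what the surjectivity of $\supp$ onto $\RCent A$ gives.
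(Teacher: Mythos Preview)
Your argument is correct and uses the same ingredients as the paper: the surjectivity of the support map onto $\RCent A$ (Propositions \ref{support3} and \ref{support4}) together with the description of $\RadCe{I}$ as the intersection of central primes containing $I$ from \cite[Prop. 3.14]{Mnew}. The only cosmetic difference is that the paper proves the inclusion $\RadCe{I}\subset\I(\Zi^{pc}(I))$ directly from the definition of the central radical (via $\C$-convexity and radicality of $\supp(\alpha)$) rather than via the prime-intersection formula, and then closes the chain $\RadCe{I}\subset\I(\Zi^{pc}(I))\subset\I(\Zi^c(I))\subset\RadCe{I}$; your packaging through $\I(W)=\bigcap_{\alpha\in W}\supp(\alpha)$ is slightly more uniform but amounts to the same proof.
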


\begin{proof}
Since $\Sp_c A\subset\Sp_{pc} A$ then $\Zi^c(I)\subset \Zi^{pc}(I)$ and thus $\I(\Zi^c(I))\supset\I(\Zi^{pc}(I)))$.

Clearly $\RadCe{I}\subset \I(\Zi^{pc}(I))$.
Indeed, let $a\in \RadCe{I}$ . We have 
	$a^{2m}+b\in I$ with $b\in \C$.
	Let $\alpha\in \Zi^{pc}(I)$: one has
	$\supp(\alpha)\supset I$, and hence 
	$a^{2m}+b\in \supp(\alpha)$. By Proposition \ref{support3} we have $\supp(\alpha)\in\RCent A$. A central ideal is $\C$-convex and we get $a^{2m}\in \supp(\alpha)$.
	By radicality of $\supp(\alpha)$ then $a\in\supp(\alpha)$.
	
	Let $f\in \I(\Zi^c(I))$. Let us show that, if $\p$ is a central prime ideal containing $I$, then $f\in \p$. By \cite[Prop. 3.14]{Mnew} we will then get $f\in\RadCe I$.
	Using Proposition \ref{support4} there exists $\alpha\in\Sp_c A$ such that $\supp (\alpha)=\p$. Clearly $\alpha\in\Zi^c(I)$ and thus $f(\alpha)=0$ i.e $f\in\supp(\alpha)=\p$.
	We get $	\I(\Zi^c(I)\subset \RadCe I$ and it ends the proof.
\end{proof}

\subsection{Precentral Positivstellens\"atze}

One may carry on further to get central and precentral Positivstellens\"atze having in mind that the algebraic nature of precentrality seems much more convenient than the geometric nature of centrality.

To get geometric central Positivstellens\"atze, our strategy is to establish first abstract precentral Positivstellens\"atze and then derive some abstract central ones and finally
get geometric central ones.

A set of the form $\overline{\S}(f_1,\ldots,f_k)=\{ \alpha\in\Sp_r A\mid f_1(\alpha)\geq 0,\ldots, f_k(\alpha)\geq 0\}$
for $f_1,\ldots,f_k$ some elements of $A$,  is called a basic closed subset of $\Sp_r A$. We denote by $\S^c(f_1,\ldots,f_k)$, $\overline{\S}^c(f_1,\ldots,f_k)$, $\S^{pc}(f_1,\ldots,f_k)$ and $\overline{\S}^{pc}(f_1,\ldots,f_k)$, the sets $\S(f_1,\ldots,f_k)$, $\overline{\S}(f_1,\ldots,f_k)$ intersected respectively with $\Sp_c A$ and $\Sp_{pc} A$.

With these notations, one gets:

\begin{thm}\label{pc-PSS} (Precentral Positivstellens\"atze):\\
	Let $f_1,\ldots,f_r$ in $A$ and $f\in A$. One has:
	\begin{enumerate}
\item $f\geq 0$ on $\overline{\S}^{pc}(f_1,\ldots,f_r)$ if and only if $$fq=p+f^{2m}$$ where $p,q$ are in $\C[f_1,\ldots,f_r]$.
\item $f>0$ on $\overline{\S}^{pc}(f_1,\ldots,f_r)$ if and only if $$fq=1+p$$ where $p,q$ are in $\C[f_1,\ldots,f_r]$.		
\item $f=0$ on $\overline{\S}^{pc}(f_1,\ldots,f_r)$ if and only if $$f^{2m}+p=0$$
where $p$ is in $\C[f_1,\ldots,f_r]$.		
	\end{enumerate}	
\end{thm}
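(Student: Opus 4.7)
The plan is to derive all three assertions as direct applications of the formal Positivstellensatz (Theorem \ref{FormalPSS}) with generating set $H = \C \cup \{f_1,\ldots,f_r\}$, in parts (1) and (2) augmented by $-f$. The key observation is that since $\C$ is already a cone containing $\sum A^2$, one has $\sum A^2[H] = \C[f_1,\ldots,f_r]$ and, similarly, $\sum A^2[H\cup\{-f\}] = \C[f_1,\ldots,f_r,-f]$. A second key remark is that, by the very definition of a precentral ordering, $\overline{\S}^{pc}(f_1,\ldots,f_r)$ is exactly the set of $\alpha\in\Sp_rA$ such that $\C\cup\{f_1,\ldots,f_r\}\subset P_\alpha$.

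For part (3), $f=0$ on $\overline{\S}^{pc}(f_1,\ldots,f_r)$ means there is no $\alpha\in\Sp_rA$ with $\C\cup\{f_1,\ldots,f_r\}\subset P_\alpha$ and $f\notin\supp(\alpha)$; Theorem \ref{FormalPSS} with $M$ generated by $f$ and $I=(0)$ directly yields an identity $p+f^{2m}=0$ with $p\in\C[f_1,\ldots,f_r]$. For part (2), $f>0$ on the set is equivalent to the non-existence of $\alpha$ containing simultaneously $\C$, the $f_i$ and $-f$ in $P_\alpha$; Theorem \ref{FormalPSS} with $M=\{1\}$, $I=(0)$ yields $1+p=0$ with $p\in\C[f_1,\ldots,f_r,-f]$. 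For part (1), the condition $f\geq 0$ translates into the non-existence of $\alpha$ with $\C\cup\{f_1,\ldots,f_r,-f\}\subset P_\alpha$ and $f\notin\supp(\alpha)$, giving $p+f^{2m}=0$ with $p\in\C[f_1,\ldots,f_r,-f]$.

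The main technical step, needed in (1) and (2), is to rewrite an arbitrary $p\in\C[f_1,\ldots,f_r,-f]$ as $p=p_1-fq$ with $p_1,q\in\C[f_1,\ldots,f_r]$. This is done by splitting the defining monomials $c\cdot f_1^{a_1}\cdots f_r^{a_r}(-f)^b$ according to the parity of $b$: even exponents give $(-f)^{2k}=(f^k)^2\in\sum A^2\subset\C$, contributing to $p_1\in\C[f_1,\ldots,f_r]$, while odd exponents give $(-f)^{2k+1}=-f\cdot (f^k)^2$, contributing to $-fq$ with $q\in\C[f_1,\ldots,f_r]$. Substituting this decomposition back gives the required identities $fq=p_1+f^{2m}$ in (1) and $fq=1+p_1$ in (2).

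The converse implications are routine evaluations: for any $\alpha\in\overline{\S}^{pc}(f_1,\ldots,f_r)$ we have $\C[f_1,\ldots,f_r]\subset P_\alpha$, so $p(\alpha), q(\alpha), f(\alpha)^{2m}\geq 0$ in the real closed field $R_\alpha$, and the claimed sign of $f(\alpha)$ is read off from the identity (using $f(\alpha)^{2m}=0\Rightarrow f(\alpha)=0$ for (1), and $1+p(\alpha)\geq 1>0$ for (2)). I do not anticipate any real obstacle; the only point requiring care is the monomial-splitting step, which is the algebraic heart making the precentral cone $\C[f_1,\ldots,f_r]$ the correct replacement for the classical $\sum A^2[f_1,\ldots,f_r]$.
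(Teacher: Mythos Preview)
Your proof is correct and follows essentially the same approach as the paper: both apply the formal Positivstellensatz (Theorem~\ref{FormalPSS}) with $H=\C\cup\{f_1,\ldots,f_r\}$ (augmented by $-f$ in (1) and (2)), $I=(0)$, and $M$ the monoid generated by $f$ or by $1$, after noting that $\overline{\S}^{pc}(f_1,\ldots,f_r)=\{\alpha\in\Sp_rA\mid H\subset P_\alpha\}$ and that $\sum A^2[H]=\C[f_1,\ldots,f_r]$. You are in fact more explicit than the paper in spelling out the parity-of-$b$ monomial decomposition $\C[f_1,\ldots,f_r,-f]\ni p=p_1-fq$ and the routine converse implications, which the paper leaves implicit.
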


\begin{proof}
	\begin{enumerate}
	\item Let $H$ be the set $\C\cup\{f_1,\ldots,f_r,-f\}$ and $M$ the monoid generated by $f$.	
	Then, there is no $\alpha\in 
	\Sp_{pc}A$ such that $-f(\alpha)\geq 0$, $f_1(\alpha)\geq 0,\ldots,f_r(\alpha)\geq 0$  and $f(\alpha)\not=0$.
	if and only if there is no $\alpha\in 
	\Sp_{r}A$ such that 
	$H\subset \alpha$
	and $-f(\alpha)\not=0$.

Then, from the formal Positivstellensatz recalled in Theorem \ref{FormalPSS}, it is equivalent to have an identity of the form
$p-fq+f^{2m}=0$ where $p,q\in \C[f_1,\ldots,f_r]$.

	\item Let $H$ be the set $\C\cup\{f_1,\ldots,f_r,-f\}$ and $M$ the monoid generated by $1$.	
There is no $\alpha\in \Sp_{pc} A$ such that $f_1(\alpha)\geq 0,\ldots,f_r(\alpha)\geq 0$, $-f(\alpha)\geq 0$ and $1(\alpha)\not=0$ 
if and only if and only if there is no $\alpha\in 
	\Sp_{r}A$ such that 
	$H\subset \alpha$
	and $1(\alpha)\not=0$ and conclude using the formal Positivstellensatz, cf. Theorem \ref{FormalPSS}.
			
	\item Let $H$ be the set $\C\cup\{f_1,\ldots,f_r\}$ and $M$ the monoid generated by $f$. Since there is no $\alpha\in \Sp_{pc} A$ such that $f_1(\alpha)\geq 0,\ldots,f_r(\alpha)\geq 0$ and 
	$f(\alpha)\not =0$ if and only if there is no $\alpha\in 
	\Sp_{r}A$ such that 
	$H\subset \alpha$
	and $f(\alpha)\not=0$, we get the proof using again the formal Positivstellensatz, cf. Theorem \ref{FormalPSS}.	
\end{enumerate}		
\end{proof}

As a particular case, one gets an asbtract precentral Hilbert 17th property:

\begin{thm}\label{precentralHilbert17} (Precentral Hilbert 17th property):\\
	Let $f\in A$. The following properties are equivalent:
	\begin{enumerate}
\item $f\geq 0$ on $\Sp_{pc} A$.
\item There exist $p,q$ in $\C$ such that $fq=p+f^{2m}$. 
\item There exist $p,q\in\C$ such that $q^2 f=p$ and $\Zi^{pc}(q)\subset\Zi^{pc}(f)$.
\item $f\in\C$.
\end{enumerate}
\end{thm}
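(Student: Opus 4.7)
My plan is to prove the theorem through the cycle of implications $(1)\Leftrightarrow(2)$, $(1)\Leftrightarrow(4)$, $(4)\Rightarrow(3)$ and $(3)\Rightarrow(1)$. The equivalence $(1)\Leftrightarrow(2)$ will follow immediately from Theorem~\ref{pc-PSS}(1) applied to the empty family of constraints ($r=0$), since in that case $\C[f_1,\dots,f_r]$ collapses to $\C$ and $\overline{\S}^{pc}()=\Sp_{pc}A$.

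For $(1)\Leftrightarrow(4)$, the direction $(4)\Rightarrow(1)$ is immediate: any precentral ordering contains $\C$ in its positive cone by definition, so $f\in\C$ forces $f(\alpha)\geq 0$. The reverse will use Proposition~\ref{prop8}, which embeds $\Sp_r\K(A)$ into $\Sp_{pc}A$; hence $f\geq 0$ on $\Sp_{pc}A$ implies $f\geq 0$ on every ordering of $\K(A)$, and Artin's classical Hilbert 17th theorem for the field $\K(A)$ then yields $f\in\sum\K(A)^2\cap A=\C$.

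The main obstacle is $(4)\Rightarrow(3)$. My strategy is to choose $q:=1+f^2$. Then $q\in\sum A^2\subset\C$, and $q^2 f=(1+f^2)^2 f$ lies in $\C$ because $\C$ is closed under multiplication (being the intersection of $A$ with the multiplicatively closed cone $\sum\K(A)^2$) and both factors are in $\C$. The delicate point is to verify that $\Zi^{pc}(q)=\emptyset$: if $1+f^2\in\supp(\alpha)$ for some $\alpha\in\Sp_{pc}A$, then Proposition~\ref{support3} tells us that $\supp(\alpha)$ is $\C$-convex, and since $1,f^2\in\C$ the convexity forces $1\in\supp(\alpha)$, contradicting properness of the support. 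This trick is crucial because a naive $q$ obtained from a representation $f=(\mathrm{SOS})/(\mathrm{SOS})$ in $\K(A)$ need not have empty precentral vanishing locus, so some artificial "positive shift" like $1+f^2$ is needed to kill the zero set.

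Finally, $(3)\Rightarrow(1)$ is a direct case split: for $\alpha\in\Sp_{pc}A$, the identity $q^2 f=p$ with $p,q\in\C\subset P_\alpha$ yields $q(\alpha)^2 f(\alpha)\geq 0$ in the residue real closed field. If $q(\alpha)\neq 0$ the square is strictly positive and $f(\alpha)\geq 0$ follows; otherwise $\alpha\in\Zi^{pc}(q)\subset\Zi^{pc}(f)$ by hypothesis, so $f(\alpha)=0$ and we are done in either case.
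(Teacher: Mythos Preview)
Your proof is correct, and your cycle of implications closes properly. The route, however, differs from the paper's in two places worth noting.

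For $(4)\Rightarrow(3)$, the paper does not pass through (4) at all but instead proves $(2)\Rightarrow(3)$ directly: starting from an identity $fq=p+f^{2m}$ with $p,q\in\C$, it multiplies by $f$ to obtain $f(p+f^{2m})=f^2q$ and then sets $t=p+f^{2m}$, $s=(f^2q)(p+f^{2m})$, so that $t^2f=s$. The inclusion $\Zi^{pc}(t)\subset\Zi^{pc}(f)$ follows because any precentral support containing $p+f^{2m}$ is $\C$-radical and hence contains $f$. Your choice $q=1+f^2$ is slicker here (it forces $\Zi^{pc}(q)=\emptyset$ outright), but the paper's construction has the advantage that it starts from the Positivstellensatz certificate in (2) rather than from the stronger hypothesis $f\in\C$; this is exactly what is reused later in Proposition~\ref{centralHilbert17continu}, where one only has $p,q\in\C^0$ and must build the new pair $P,Q\in\C^0$ out of them.

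For $(1)\Rightarrow(4)$, you bypass the chain $(1)\Rightarrow(2)\Rightarrow(3)\Rightarrow(4)$ by invoking Proposition~\ref{prop8} and Artin's theorem for the field $\K(A)$ directly. This is perfectly valid and arguably more transparent; the paper's longer route simply reflects its interest in exhibiting the explicit identities of (2) and (3) along the way.
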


\begin{proof}
Les us show (1) $\Rightarrow$ (2). 	
Assume $f\geq 0$ on $\Sp_{pc} A$. Since  $f\geq 0$ on $\overline{\S}^{pc}(1)$ then by Theorem \ref{pc-PSS} one gets (2).

Let us show (2) implies (3).
Assume $fq=p+f^{2m}$ with $p,q\in\C$. One may assume that $q\not=0$ or equivalently that $p+f^{2m}\not=0$ since otherwise $f=0$ (by hypothesis $\K(A)$ is formally real and hence the null ideal is real in $A$). One gets
	$f(p+f^{2m})=f^2q$ which gives 
	$f=\frac{(f^2q)(p+f^{2m})}{(p+f^{2m})^2}\in \C$. Set $s=(f^2q)(p+f^{2m})$ and $t=p+f^{2m}$. We have $t^2f=s$ and 
$s,t\in\C$. We have $\Zi^{pc}(t)\subset \Zi^{pc}(f)$: if $\alpha\in\Zi^{pc}(p+f^{2m})$ then $p+f^{2m}\in\supp(\alpha)$ which is a central ideal and by definition we get $f\in\supp(\alpha)$. This shows the desired implication.

Trivially (3) implies (4).
	
To end, let us show that (4) implies (1).	
Assume $f\in\C$ then it is clear from the definition of a precentral cone that $f\geq 0$ on $\Sp_{pc} A$.
\end{proof}

Hence, in any domain $A$ with formally real fraction field, the intersection of all central cones coincides with the intersection of all precentral orderings.

\subsection{Central Positivstellens\"atze}
\smallskip

From the precentral Positivstellens\"atze, one may deduce some central ones. Let us remind first that Proposition \ref{CentralNSS} gives that $f=0$ on $\Zi^{pc}(I)$ if and only if $f=0$ on $\Zi^{c}(I)$. One also have :

\begin{lem}\label{posprecent}
	\begin{enumerate}
		\item 	$f>0$ on $\Sp_{pc}A$ if and only if 
		$f>0$ on $\Sp_{c}A$. 
		\item 	$f\geq 0$ on $\Sp_{pc}A$ if and only if 
		$f\geq 0$ on $\Sp_{c}A$. 		
		\end{enumerate}
\end{lem}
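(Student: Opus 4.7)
The plan is to reduce both items to Theorem \ref{precentralHilbert17} together with the classical solution of Hilbert 17th problem in the fraction field $\K(A)$. Since $\Sp_cA\subset\Sp_{pc}A$, the forward implications (positivity on the precentral locus gives positivity on the central locus) are immediate, so only the reverse implications demand attention.

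For (2), assume $f\geq 0$ on $\Sp_cA$. The crucial observation is that $\Sp_r\K(A)\subset\Sp_cA$: via the identification of Proposition \ref{orderingnull}, every ordering $\beta\in\Sp_r\K(A)$ can be viewed as an ordering of $A$ with support $(0)$, and since it is its own specialization it is central by Definition \ref{defcent} (equivalently, as recalled right after that definition, $\Sp_cA=\overline{\Sp_r\K(A)}$ contains $\Sp_r\K(A)$). Thus $f\geq 0$ on every ordering of $\K(A)$, and Artin's theorem in the formally real field $\K(A)$ yields $f\in\sum\K(A)^2$, so $f\in\C$. By Theorem \ref{precentralHilbert17}, the membership $f\in\C$ is equivalent to $f\geq 0$ on $\Sp_{pc}A$, which finishes (2).

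For (1), assume $f>0$ on $\Sp_cA$. In particular $f\geq 0$ on $\Sp_cA$, so (2) already provides $f\geq 0$ on $\Sp_{pc}A$, and it remains to show $f\notin\supp(\alpha)$ for every $\alpha\in\Sp_{pc}A$. Arguing by contradiction, suppose $f\in\supp(\alpha)$ for some $\alpha\in\Sp_{pc}A$. Then $\supp(\alpha)\in\RCent A$ by Proposition \ref{support3}, and it contains $f$. Proposition \ref{support4} asserts that the support map $\Sp_cA\to\RCent A$ is surjective, so there exists $\beta\in\Sp_cA$ with $\supp(\beta)=\supp(\alpha)$, which forces $f(\beta)=0$; this contradicts $f>0$ on $\Sp_cA$.

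The main substance lies in (2), where one has to compare positivity over two a priori different subsets of $\Sp_rA$. The trick is to sandwich them by $\Sp_r\K(A)\subset\Sp_cA\subset\Sp_{pc}A$, with both outer containments controlled by the cone $\C=A\cap\sum\K(A)^2$, so that the classical Artin theorem applied to $\K(A)$ transports the data across. Item (1) is then a short consequence of (2) combined with surjectivity of the support map onto $\RCent A$.
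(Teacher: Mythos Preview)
Your proof is correct and largely parallel to the paper's. For item (1) the arguments coincide: both go through $f\in\sum\K(A)^2$ via the inclusion $\Sp_r\K(A)\subset\Sp_cA$, and then rule out zeros on $\Sp_{pc}A$ using the surjectivity of the support map onto $\RCent A$ (Propositions \ref{support3} and \ref{support4}); the only cosmetic difference is that you first package the nonnegativity step as your item (2), whereas the paper argues it inline.

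For item (2) your route differs from the paper's. You again pass through Artin in $\K(A)$ to obtain $f\in\C$ and then invoke the (trivial direction of) Theorem \ref{precentralHilbert17}. The paper instead argues by contradiction using the characterization of precentral orderings in Theorem \ref{charprecentral}: if $f(\beta)<0$ for some precentral $\beta$, then $\beta\in\S(-f)$, and condition (4) of that theorem produces a central $\gamma\in\S(-f)$, contradicting $f\geq 0$ on $\Sp_cA$. Your approach is a bit more economical in that it avoids Theorem \ref{charprecentral} altogether, relying only on Artin and the definition of precentrality; the paper's approach, on the other hand, stays entirely inside $\Sp_rA$ without ever invoking the field-theoretic Artin step for (2).
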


\begin{proof}
The direct implications are clear since $\Sp_c A\subset \Sp_{pc} A$.

Let $f>0$ on $\Sp_{c}A$. Then, $f>0$ on $\Sp_{r}\K(A)$ and hence $f\in \sum \K(A)^2$.
Hence, $f\geq 0$ on $\Sp_{pc}A$. If there exists $\alpha\in \Sp_{pc}A$ such that $f(\alpha)=0$ then $f\in \supp(\alpha)$ and by Proposition \ref{support3} then $\supp(\alpha)$ is a central ideal. By Proposition \ref{support4}, there is $\beta\in \Sp_{c}A$ such that $\supp(\alpha)=\supp(\beta)$ and thus $f(\beta)=0$, a contradiction.
	
Let $f\geq 0$ on $\Sp_{c}A$. Assume that $f(\beta)<0$ with $\beta$ a precentral ordering. By characterization of Theorem \ref{charprecentral}, one gets the existence of $\gamma$ central such that $f(\gamma)<0$, contradiction.	
\end{proof}

On the other hand, beware that 
in general $\S^{pc}(f)\not =\S^{c}(f)$ as one can see using Example \ref{contre-ex} with $f=t_1$.
Likewise, beware that in general $\overline{\S}^{pc}(f)\not =\overline{\S}^c(f)$. Indeed, if we work now in the domain $B=A[t]/(tt_1-1)$ where $A$ is the domain of Example \ref{contre-ex}, one has also an element $g=-t_2$ such that $g\geq 0$ on $\overline{\S}^{c}(f)$ but not 
on $\overline{\S}^{pc}(f)$. These observations shows that it is possible to derive a Hilbert 17th property from the precentral one, although it is 
not possible to derive central Positivstellens\"atze from Theorem \ref{pc-PSS}.

\begin{prop}\label{centralHilbert17} (Hilbert 17th Property):\\
Let $f\in A$. The following properties are equivalent:
	\begin{enumerate}
\item $f\geq 0$ on $\Sp_{c} A$.
\item There exist $p,q$ in $\C$ such that $fq=p+f^{2m}$. 
\item There exist $p,q\in\C$ such that $q^2 f=p$ and $\Zi^{c}(q)\subset\Zi^{c}(f)$.
\item $f\in\C$.
\end{enumerate}
\end{prop}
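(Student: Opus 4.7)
The plan is to reduce the statement directly to the precentral Hilbert 17th property (Theorem \ref{precentralHilbert17}) by noting that each of the four conditions in Proposition \ref{centralHilbert17} is equivalent to the corresponding condition in Theorem \ref{precentralHilbert17}. Conditions (2) and (4) are literally identical in both statements, so there is nothing to prove for those. It remains to handle (1) and (3).

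For (1), I would invoke Lemma \ref{posprecent}(2) directly: it asserts that $f\geq 0$ on $\Sp_cA$ is equivalent to $f\geq 0$ on $\Sp_{pc}A$. Hence condition (1) here matches condition (1) of Theorem \ref{precentralHilbert17}.

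For (3), the point is that $\Zi^c(q)\subset \Zi^c(f)$ is equivalent to $f\in \I(\Zi^c((q)))$ and similarly $\Zi^{pc}(q)\subset \Zi^{pc}(f)$ is equivalent to $f\in \I(\Zi^{pc}((q)))$. By the central Nullstellensatz (Proposition \ref{CentralNSS}), both ideals $\I(\Zi^c((q)))$ and $\I(\Zi^{pc}((q)))$ coincide with $\RadCe{(q)}$. Hence the two inclusion conditions are equivalent, and condition (3) matches condition (3) of Theorem \ref{precentralHilbert17}.

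Combining these observations, the four conditions of Proposition \ref{centralHilbert17} are in bijective equivalence with the four conditions of Theorem \ref{precentralHilbert17}, which are already known to be equivalent. I expect no real obstacle: the main subtlety is making sure the central Nullstellensatz is applied to the principal ideal $(q)$ in order to identify the vanishing-type condition (3) on the central spectrum with the one on the precentral spectrum, and to cite Lemma \ref{posprecent} (rather than reproving it) to identify condition (1). No new calculation beyond those done for the precentral version is required.
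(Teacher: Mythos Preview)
Your proposal is correct and follows essentially the same strategy as the paper: both reduce to Theorem \ref{precentralHilbert17} via Lemma \ref{posprecent} for condition (1), with conditions (2) and (4) identical. The only minor difference is in handling (3): the paper closes the cycle by reproving (2)$\Rightarrow$(3) directly (repeating the construction from Theorem \ref{precentralHilbert17} and observing that $\Zi^{pc}(t)\subset\Zi^{pc}(f)$ restricts to $\Zi^{c}(t)\subset\Zi^{c}(f)$), whereas you invoke the central Nullstellensatz (Proposition \ref{CentralNSS}) to identify the central and precentral vanishing conditions---a clean and equally valid shortcut.
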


\begin{proof}
Using Lemma \ref{posprecent} and Theorem \ref{precentralHilbert17}, we see that (1), (2) and (4) are equivalent. Clearly (3) implies (4). 

Let us show that (2) implies (3). Assume $fq=p+f^{2m}$ with $p,q\in\C$. Repeating the arguments used in the proof of (2) implies (3) of Theorem \ref{precentralHilbert17} we get an identity $t^2f=s$ with 
$s,t\in\C$ and $\Zi^{pc}(t)\subset \Zi^{pc}(f)$ and it is easy to see that we also have $\Zi^{c}(t)\subset \Zi^{c}(f)$.
\end{proof}

Note that $\Sp_c A$ is the smallest closed subset of $\Sp_r A$ such that the nonnegativity on this subset is equivalent to be in $\C$.

One may also give an interpretation with cones, namely : in any domain with formally real fraction field the intersection of all central cones coincides with the intersection of all central orderings.

Although we do not obtain central Positivstellens\"atze in the general case, under a condition on the stability index, the central and precentral spectra coincide 
and one gets from Theorem \ref{pc-PSS}:

\begin{prop}\label{c-PSSSurf} (Central Positivstellens\"atze in low dimension):\\
	Assume that $\sta (\Sp_r A\setminus \Sp_c A)\leq 1$.
	Let $f_1,\ldots,f_r$ in $A$ and $f\in A$.
	One has:
	\begin{enumerate}
\item $f\geq 0$ on $\overline{\S}^{c}(f_1,\ldots,f_r)$ if and only if $$fq=p+f^{2m}$$ where $p,q$ are in $\C[f_1,\ldots,f_r]$.
\item $f>0$ on $\overline{\S}^{c}(f_1,\ldots,f_r)$ if and only if $$fq=1+p$$ where $p,q$ are in $\C[f_1,\ldots,f_r]$.		
\item $f=0$ on $\overline{\S}^{c}(f_1,\ldots,f_r)$ if and only if $$f^{2m}+p=0$$
where $p$ is in $\C[f_1,\ldots,f_r]$.			\end{enumerate}	
\end{prop}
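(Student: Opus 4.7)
The strategy is to reduce the three central statements to their precentral analogues given in Theorem \ref{pc-PSS}. The bridge is to observe that, under the stability hypothesis $\sta(\Sp_r A \setminus \Sp_c A) \leq 1$, the central and precentral spectra of $A$ coincide.

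To establish $\Sp_c A = \Sp_{pc} A$ under this hypothesis, I would take an arbitrary $\alpha \in \Sp_{pc} A$ and verify the characterization (4) of Theorem \ref{charcentral}: for every $f_1, \ldots, f_k \in A$ with $\alpha \in \S(f_1, \ldots, f_k)$ and $k \leq \sta(\Sp_r A \setminus \Sp_c A)$, the intersection $\S(f_1, \ldots, f_k) \cap \Sp_c A$ is non-empty. Since by assumption $k \leq 1$, only two cases arise. For $k = 0$ the condition reduces to $\Sp_c A \neq \emptyset$, which follows from Proposition \ref{thm11} (the existence of a precentral ordering forces $\K(A)$ to be formally real, which in turn yields central orderings). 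For $k = 1$ the condition is exactly item (4) of Theorem \ref{charprecentral}, which holds because $\alpha$ is precentral. Hence $\alpha \in \Sp_c A$, and the reverse inclusion is automatic.

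Once $\Sp_c A = \Sp_{pc} A$ has been secured, the corresponding basic closed subsets coincide, namely $\overline{\S}^c(f_1, \ldots, f_r) = \overline{\S}^{pc}(f_1, \ldots, f_r)$, and similarly for strict inequalities and equalities. Therefore the three sign conditions $f \geq 0$, $f > 0$ and $f = 0$ on $\overline{\S}^c(f_1, \ldots, f_r)$ are identical to the corresponding conditions on $\overline{\S}^{pc}(f_1, \ldots, f_r)$, and I can invoke items (1), (2) and (3) of Theorem \ref{pc-PSS} verbatim to obtain the three items of the present proposition.

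The only non-formal ingredient beyond Theorem \ref{pc-PSS} is thus the equality $\Sp_c A = \Sp_{pc} A$, which is a direct combination of the characterizations proven in Theorems \ref{charcentral} and \ref{charprecentral}, so I do not anticipate any serious obstacle.
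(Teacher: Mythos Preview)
Your proposal is correct and follows exactly the approach of the paper: the paper notes (in the text immediately preceding Proposition \ref{c-PSSSurf}, and already after Theorem \ref{charprecentral}) that the hypothesis $\sta(\Sp_r A\setminus\Sp_c A)\leq 1$ forces $\Sp_{pc}A=\Sp_cA$, and then simply invokes Theorem \ref{pc-PSS}. Your justification of this equality via items (4) of Theorems \ref{charcentral} and \ref{charprecentral} is precisely the argument the paper has in mind.
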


\vskip 0,2cm\noindent\textbf{Remark}
\begin{enumerate}
\item[(i)] Assertion (1) is false in the case $\sta(\Sp_r A\setminus \Sp_c A)=2$. Consider the domain $B=A[t]/(tt_2-1)$ where $A$ is the ring in Example \ref{contre-ex}. As previously noticed,
working in $\Sp_r B$, $-t_2\geq 0$ on $\overline{\S}^{c}(t_1)$ but not 
on $\overline{\S}^{pc}(t_1)$. By Theorem \ref{pc-PSS}, we cannot have an identity of the form $-t_2q=p+(t_2)^{2m}$ where $p,q$ are in $\C[t_1]$.
\item[(ii)] Likewise, using Example \ref{contre-ex}, assertion (3) is false in the case $\sta(\Sp_r A\setminus \Sp_c A)=2$. Indeed, $t_1t_2=0$ on $\overline{\S}^{c}(t_1,t_2)$ but not on $\overline{\S}^{pc}(t_1,t_2)$.
\item[(iii)] One may give a counter example to (2) in the case $\sta(\Sp_r A\setminus \Sp_c A)=2$..
Indeed, let us consider the domain $B=A[t,s]/(tt_1-1, st_2-1)$ where $A$ is the ring in Example \ref{contre-ex}. It can be shown that $-t_2>0$ on $\overline{\S}^{c}(t_1)$ but not 
on $\overline{\S}^{pc}(t_1)$.
\item[(iv)] In the full case i.e if $\overline{\S}^{c}(f_1,\ldots,f_r)=\Sp_c A$ then the assertions (1) and (2) of the proposition are valid without assumption on the stability index by Proposition \ref{centralHilbert17}.
\end{enumerate}
\vskip 0,2cm

Besides, without assumption on the stability index, assertion (3) is always valid whenever we consider a basic closed subset with a single inequality.
\begin{prop}\label{c-PSSSurfOneIneq}
	Let $f,g$ in $A$.
	Then, 
	\begin{enumerate} 
	\item[] $f=0$ on $\overline{\S}^{c}(g)$ if and only if $f^{2m}+p=0$ where $p$ is in $\C[g]$.	
	\end{enumerate}	
\end{prop}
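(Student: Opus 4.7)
My plan is to reduce the statement to the precentral Positivstellensatz (Theorem \ref{pc-PSS}(3)), which characterizes identities of the shape $f^{2m}+p=0$ with $p\in\C[g]$ in terms of vanishing on $\overline{\S}^{pc}(g)$ rather than on $\overline{\S}^c(g)$. Since $\Sp_c A\subset\Sp_{pc}A$ gives one inclusion for free, the real work is to show that in the single-inequality case, vanishing on $\overline{\S}^c(g)$ forces vanishing on $\overline{\S}^{pc}(g)$; then Theorem \ref{pc-PSS}(3) finishes the job.

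The reverse direction of the stated equivalence is immediate: if $f^{2m}+p=0$ with $p\in\C[g]$, then any $\beta\in\overline{\S}^c(g)$ is in particular precentral and satisfies $g(\beta)\geq 0$, so $p(\beta)\geq 0$ while $f^{2m}(\beta)\geq 0$; the identity forces $f^{2m}(\beta)=0$, hence $f(\beta)=0$ since $\supp(\beta)$ is prime.

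For the direct direction, I fix $\alpha\in\overline{\S}^{pc}(g)$ and split on the value of $g(\alpha)$. If $g(\alpha)=0$, then $g\in\supp(\alpha)$; this support is central by Proposition \ref{support3}, hence by Proposition \ref{support4} it is the support of some $\beta\in\Sp_c A$, which automatically lies in $\overline{\S}^c(g)$, and the hypothesis yields $f\in\supp(\beta)=\supp(\alpha)$, so that $f(\alpha)=0$.

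The interesting case is $g(\alpha)>0$, which I expect to be the main obstacle. The trick is to package the two conditions $g\geq 0$ and $f\neq 0$ into a single strict inequality: assuming for contradiction that $f(\alpha)\neq 0$, one has $(gf^2)(\alpha)>0$, so $\alpha\in\S(gf^2)$. By Theorem \ref{charprecentral}(4), the intersection $\S(gf^2)\cap\Sp_c A$ is non-empty, and any $\beta$ in it satisfies both $g(\beta)>0$ and $f(\beta)\neq 0$, i.e.\ $\beta\in\overline{\S}^c(g)$ with $f(\beta)\neq 0$, contradicting the hypothesis. The use of the single auxiliary element $gf^2$ is the heart of the argument, and is specific to one inequality: for several $g_i$'s, the product $g_1\cdots g_r f^2>0$ would not force $g_i>0$ for each $i$, which is consistent with the failure of the multi-inequality analogue pointed out in the remark following Proposition \ref{c-PSSSurf}.
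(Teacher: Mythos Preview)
Your proof is correct and follows essentially the same strategy as the paper: reduce to Theorem~\ref{pc-PSS}(3) by showing that $f=0$ on $\overline{\S}^{pc}(g)$, splitting on whether $g(\alpha)=0$ (where Propositions~\ref{support3} and~\ref{support4} supply a central ordering with the same support) or $g(\alpha)>0$ (where Theorem~\ref{charprecentral}(4) is invoked). The only difference is tactical: in the case $g(\alpha)>0$ the paper applies Theorem~\ref{charprecentral}(4) to $g$ itself, passes to a generization $\gamma$ of support $(0)$, and deduces that $f=0$ identically in $A$, whereas your use of the single element $gf^2$ neatly bundles the two constraints $g>0$ and $f\neq 0$ into one strict inequality and avoids the generization step.
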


\begin{proof}
	Assume that $f=0$ on $\overline{\S}^{c}(g)$. 
	
	Let $\alpha\in \overline{\S}^{pc}(g)$. 
	If $g(\alpha)=0$, then $g$ belongs to the support of $\alpha$ which is known to be also the support of a central ordering $\beta$. 
	By assumption $f(\beta)=0$ and hence $f(\alpha)=0$.
	
	If $g(\alpha)>0$, then there is a central ordering $\beta'$ such that $g(\beta')>0$ by (4) of Theorem \ref{charprecentral}. Hence, there is an ordering $\gamma$ of support $(0)$ such that $\gamma\to\beta'$ and thus $g(\gamma)>0$.
	By assumption $f(\gamma)=0$ and hence $f=0$, in particular $f(\alpha)=0$.
	
	It follows that $f=0$ on $\overline{\S}^{pc}(g)$ and we get the proof by (3) of Theorem \ref{pc-PSS}.
\end{proof}

\subsection{Geometric central Positivstellens\"atze}

Let us now write down the cases of geometric rings.

These are obtained from the central abstract results of the previous subsection together with the so-called Artin-Lang property (cf \cite[Thm. 4.1.2]{BCR}). 

We first give a slightly more detailed version of \cite[Thm. 6.1.9]{BCR}.

\begin{prop} \label{geomcentralHilbert17} (Geometric Hilbert 17th Property):\\
Let $V$ be an irreducible algebraic variety over $R$. Let $f\in R[V]$ and $\C=R[V]\cap\sum \K(V)^2$.
The following properties are equivalent:
	\begin{enumerate}
\item $f\geq 0$ on $\Cent V(R)$.
\item There exist $p,q$ in $\C$ such that $fq=p+f^{2m}$. 
\item There exist $p,q\in\C$ such that $q^2 f=p$ and $\Zi(q)\cap \Cent V(R)\subset\Zi(f)\cap \Cent V(R)$.
\item $f\in\C$.
\end{enumerate}
\end{prop}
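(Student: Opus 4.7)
The plan is to deduce this geometric version from the abstract Proposition \ref{centralHilbert17} via the identification $\Sp_c R[V]=\widetilde{\Cent V(R)}$ from Proposition \ref{alg+geomcentrality}, and then show that the two conditions involving geometric data (the positivity statement (1) and the inclusion of zero-sets in (3)) are faithfully encoded in the real spectrum by the tilde correspondence. Since (2) and (4) are purely algebraic identities in $R[V]$, they require no transfer at all: they are literally the same conditions as in the abstract statement.

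For (1), I would argue that $f\geq 0$ on $\Cent V(R)$ if and only if $f\geq 0$ on $\widetilde{\Cent V(R)}=\Sp_c R[V]$. One direction is immediate through the inclusion $V(R)\hookrightarrow \Sp_r R[V]$. For the converse, if there were $\alpha\in \widetilde{\Cent V(R)}$ with $f(\alpha)<0$, then the non-empty constructible set $\widetilde{\Cent V(R)}\cap \widetilde{S}(-f)=\widetilde{\Cent V(R)\cap S(-f)}$ would force, via the Artin--Lang style bijection between semialgebraic and constructible sets (\cite[Prop. 7.2.2, Thm. 7.2.3]{BCR}), the existence of an actual point $x\in \Cent V(R)$ with $f(x)<0$, a contradiction. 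So (1) is equivalent to the abstract condition "$f\geq 0$ on $\Sp_c R[V]$".

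For (3), the key observation is that for any $h\in R[V]$, the closed constructible set $\Zi^r(h)\subset \Sp_r R[V]$ equals $\widetilde{\Zi(h)\cap V(R)}$, and hence
$$\Zi^c(h)=\Zi^r(h)\cap \Sp_c R[V]=\widetilde{\Zi(h)\cap V(R)}\cap \widetilde{\Cent V(R)}=\widetilde{\Zi(h)\cap \Cent V(R)}.$$
Since the tilde correspondence is inclusion-preserving and injective on semialgebraic subsets, $\Zi^c(q)\subset \Zi^c(f)$ is equivalent to $\Zi(q)\cap \Cent V(R)\subset \Zi(f)\cap \Cent V(R)$. Thus (3) coincides with the abstract condition (3) of Proposition \ref{centralHilbert17}.

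With these two translations in place, the proposition follows directly from Proposition \ref{centralHilbert17} applied to $A=R[V]$. The only delicate step is the transfer of (1), which rests on the semialgebraic/constructible dictionary. This is the step I expect to state carefully, but the work has essentially been done in Section 2.3 and in \cite[Chap. 7]{BCR}, so no new argument is required.
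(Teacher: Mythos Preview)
Your proposal is correct and follows essentially the same approach as the paper: reduce to the abstract Proposition \ref{centralHilbert17} by using $\Sp_c R[V]=\widetilde{\Cent V(R)}$ (Proposition \ref{alg+geomcentrality}) together with the Artin--Lang/tilde correspondence to translate condition (1), and the identity $\Zi^c(g)=\widetilde{\Zi(g)\cap \Cent V(R)}$ to translate condition (3). The paper's proof is terser but makes exactly these two observations before invoking Proposition \ref{centralHilbert17}.
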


\begin{proof}
By the Artin-Lang property (cf \cite[Thm. 4.1.2]{BCR}) or alternatively by Tarski-Seidenberg property, if $f\geq 0$ on $\Cent V(R)$, then $f\geq 0$ on $\Sp_{c} R[V]$ since $\widetilde{\Cent V(R)}=\Sp_{c} R[V]$. Remark also that for $g\in R[V]$ we have $\widetilde{\Zi(g)\cap \Cent V(R)}=\Zi^{c} (g)$.
One may then use Proposition \ref{centralHilbert17}.	
\end{proof}

One also has:
\begin{prop}\label{geoc-PSSSurf} (Geometric central Positivstellens\"atze for surfaces):\\
Let $V$ be an irreducible algebraic variety over $R$ such that $\dim (V(R))\leq 2$. Let $f,f_1,\ldots,f_r$ in $R[V]$ and $\C=R[V]\cap\sum \K(V)^2$.
	One has:
	\begin{enumerate}
\item $f\geq 0$ on $\overline{\S}(f_1,\ldots,f_r)\cap \Cent V(R)$ if and only if $$fq=p+f^{2m}$$ where $p,q$ are in $\C[f_1,\ldots,f_r]$.
\item $f>0$ on $\overline{\S}(f_1,\ldots,f_r)\cap \Cent V(R)$ if and only if $$fq=1+p$$ where $p,q$ are in $\C[f_1,\ldots,f_r]$.		
\item $f=0$ on $\overline{\S}(f_1,\ldots,f_r)\cap \Cent V(R)$ if and only if $$f^{2m}+p=0$$
where $p$ is in $\C[f_1,\ldots,f_r]$.		
	\end{enumerate}	
\end{prop}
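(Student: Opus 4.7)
The plan is to reduce the three geometric statements to the abstract precentral Positivstellens\"atze (Theorem \ref{pc-PSS}) via two successive translations, paralleling the proof of Proposition \ref{geomcentralHilbert17}.

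First I would translate from the geometric setting on $V(R)$ to the real-spectrum setting on $\Sp_r R[V]$. For $g\in R[V]$ and a sign condition $\star \in\{{\geq 0},\,{>0},\,{=0}\}$, I claim that ``$g\,\star$'' holds on $\overline{\S}(f_1,\ldots,f_r)\cap \Cent V(R)$ if and only if ``$g\,\star$'' holds on $\overline{\S}^{c}(f_1,\ldots,f_r)$. The ingredients are: the tilde correspondence between semialgebraic subsets of $V(R)$ and constructible subsets of $\Sp_r R[V]$, which commutes with finite intersections and identifies basic semialgebraic sets with their real-spectrum counterparts; Proposition \ref{alg+geomcentrality}, which gives $\widetilde{\Cent V(R)} = \Sp_c R[V]$; and the Artin-Lang principle (cf. \cite[Thm. 4.1.2]{BCR}), which transfers each sign condition between a semialgebraic set and its associated constructible set. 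Together they yield the identity $\widetilde{\overline{\S}(f_1,\ldots,f_r)\cap \Cent V(R)} = \overline{\S}^{c}(f_1,\ldots,f_r)$ and the desired equivalence.

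Second, under the hypothesis $\dim V(R)\leq 2$, Corollary \ref{centraldim2} ensures $\Sp_c R[V] = \Sp_{pc} R[V]$, so $\overline{\S}^{c}(f_1,\ldots,f_r) = \overline{\S}^{pc}(f_1,\ldots,f_r)$. Combined with the previous step, each of the three sign conditions on $\overline{\S}(f_1,\ldots,f_r)\cap\Cent V(R)$ becomes equivalent to the corresponding sign condition on the precentral basic closed set $\overline{\S}^{pc}(f_1,\ldots,f_r)$.

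Finally, I would apply the three items of Theorem \ref{pc-PSS} to derive the algebraic identities in $\C[f_1,\ldots,f_r]$; alternatively, one may invoke Proposition \ref{c-PSSSurf} directly, whose stability-index hypothesis is satisfied here by the Br\"ocker--Scheiderer argument used in the proof of Corollary \ref{centraldim2}. Since all the machinery is already in place, I do not expect a serious obstacle; the only careful point is to check that the tilde correspondence faithfully transfers each of the three sign conditions $\geq 0$, $>0$ and $=0$, which is standard for basic open and closed subsets.
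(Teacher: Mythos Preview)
Your proposal is correct and follows essentially the same route as the paper: pass from the geometric condition on $\overline{\S}(f_1,\ldots,f_r)\cap \Cent V(R)$ to the corresponding condition on $\overline{\S}^{c}(f_1,\ldots,f_r)$ via the tilde correspondence and Artin--Lang (using $\widetilde{\Cent V(R)}=\Sp_c R[V]$), then use the dimension hypothesis together with Br\"ocker--Scheiderer to reduce to Proposition~\ref{c-PSSSurf} (equivalently, to Theorem~\ref{pc-PSS} after noting $\Sp_c R[V]=\Sp_{pc}R[V]$). One small wording point: Corollary~\ref{centraldim2} is stated for $\dim V\leq 2$, while here the hypothesis is $\dim V(R)\leq 2$; your alternative of appealing directly to the stability-index bound via Br\"ocker--Scheiderer, as the paper does, is the cleaner justification.
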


\begin{proof}

Since $\dim (V(R))\leq 2$, one has $\sta(\Sp_r R[V]\setminus \Sp_c R[V])\leq 1$.

Let us show just (1) since one proceeds likewise for the other properties.
We show the non obvious implication. Let us assume that $f\geq 0$ on $\overline{\S}(f_1,\ldots,f_r)\cap \Cent V(R)$.
By Artin-Lang property \cite[Thm. 4.1.2]{BCR}, one deduces that 
	$f\geq 0$ on $\overline{\S}^{c}(f_1,\ldots,f_r)$. 
	One concludes then by application of
	(1) Proposition \ref{c-PSSSurf}.	
\end{proof}

\subsubsection{In the Nash setting} 

Recall that a Nash function on $R^n$ is a semialgebraic function of class $C^{+\infty}$ (typically $\sqrt{1+x^2}$ is a Nash function on $R$). 
Let us denote by $\mathcal{N}(R^n)$ the ring of all Nash functions on $R^n$.
Let us consider an irreducible Nash set $V=\Zi(I)$ given by a prime ideal $I\subset \mathcal{N}(R^n)$. 
Let us denote by $A$ or $\mathcal{N}(V)$ the quotient ring $\mathcal{N}(R^n)/I$ which can be seen as the ring of Nash functions over $V$.
This ring is an excellent ring as one can see using the same argument than in the proof of \cite[VIII Prop. 8.4]{ABR}, namely the criterion stated in \cite[VII Prop. 2.4]{ABR}.

As for the polynomials, one may define the central locus $\Cent(V)$ of $V$ as the Euclidean closure of the set $\Reg(V)$ of Nash regular points of $V$ (and its coincides with the algebraic central locus when $V$ is algebraic). Namely, a point $x\in V$ associated to the maximal ideal $m_x$ is said to be regular if the local ring $A_{m_x}$ is regular.

Note that since a Nash function is semialgebraic, it gives sense to $\widetilde{\Cent(V)}\subset \widetilde{R^n}$.
Moreover, recall from 
\cite[Prop. 8.8.1]{BCR} that the canonical morphism
$\Sp_r\mathcal{N}(R^n)\to \Sp_r R[x_1,\ldots,x_n]=\widetilde{R^n}$ 
is an homeomorphism and induces another homeomorphism $$\Sp_r \mathcal{N} (V)\simeq \widetilde{V}.$$

One key tool of the polynomial case to relate geometry to algebra is the tilde operator. For instance, we have already seen that it commutes with the topological closure. Namely, for any semialgebraic subset $S$ of $R^n$, by \cite[Thm. 7.2.3]{BCR} one has 
$\widetilde{\overline{S}^E}=\overline {\widetilde{S}}$.
Roughly speaking, this commutation is still valid in the Nash case:

\begin{lem}\label{CentralSpectrumNash}
	We have
$\widetilde{\Cent(V)}=\Sp_c \mathcal{N}(V)$.
\end{lem}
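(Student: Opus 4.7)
The strategy is to follow the template of the algebraic case (Proposition \ref{alg+geomcentrality}) by invoking the abstract Proposition \ref{abstract764}, exploiting the excellence of $\mathcal{N}(V)$ noted just before the statement together with the homeomorphism $\Sp_r\mathcal{N}(V)\simeq\widetilde{V}$. The two main building blocks are (i) the commutation of the tilde operator with Euclidean closure for semialgebraic subsets of $V$, and (ii) the identification of the abstract regular locus of $\mathcal{N}(V)$ with the Nash regular locus of $V$.

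Concretely, I would first apply Proposition \ref{abstract764} to the excellent domain $A=\mathcal{N}(V)$ to get
$$\Sp_c\mathcal{N}(V)=\overline{\Sp_r\K(\mathcal{N}(V))}=\overline{\widetilde{\Reg(\mathcal{N}(V))}}.$$
Second, I would identify $\widetilde{\Reg(\mathcal{N}(V))}\subset\Sp_r\mathcal{N}(V)$ (defined via the support) with the semialgebraic tilde $\widetilde{\Reg(V)}$, by noting that under $\Sp_r\mathcal{N}(V)\simeq\widetilde{V}$ both are cut out by the same Nash (Jacobian-type) conditions, so that in particular the real maximal ideals contained in $\widetilde{\Reg(\mathcal{N}(V))}$ are exactly the Nash regular closed points of $V$.

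Third, I would transfer the algebraic tilde/closure commutation from \cite[Thm. 7.2.3]{BCR} to the Nash setting using the homeomorphism $\Sp_r\mathcal{N}(V)\simeq\widetilde{V}\subset\widetilde{R^n}$. Since Nash functions on $R^n$ are semialgebraic, the semialgebraic subset $\Reg(V)\subset V\subset R^n$ has a well-defined constructible tilde, and we get
$$\widetilde{\overline{\Reg(V)}^E}=\overline{\widetilde{\Reg(V)}}.$$
Combining the three steps with the definition $\Cent(V)=\overline{\Reg(V)}^E$ yields
$$\widetilde{\Cent(V)}=\widetilde{\overline{\Reg(V)}^E}=\overline{\widetilde{\Reg(V)}}=\overline{\widetilde{\Reg(\mathcal{N}(V))}}=\Sp_c\mathcal{N}(V).$$

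The main obstacle is the step requiring care that $\widetilde{\Reg(\mathcal{N}(V))}$ (ring-theoretic) and $\widetilde{\Reg(V)}$ (semialgebraic) agree, together with the validity of the tilde/closure commutation in the Nash context. Both rely on the fact that Nash-defined subsets of $V$ are semialgebraic and are described by the same Nash equations and inequalities whether viewed inside $V$ or inside $\Sp_r\mathcal{N}(V)$, so that the transfer/Artin--Lang mechanism used to prove \cite[Thm. 7.2.3]{BCR} can be transported through the homeomorphism $\Sp_r\mathcal{N}(V)\simeq\widetilde{V}$ without modification.
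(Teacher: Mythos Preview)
Your proposal is correct and follows essentially the same route as the paper: apply Proposition \ref{abstract764} to the excellent ring $\mathcal{N}(V)$, identify $\widetilde{\Reg(\mathcal{N}(V))}$ with $\widetilde{\Reg(V)}$ via the common defining ideal $J$ of the singular locus, and then use the tilde/closure commutation for semialgebraic sets. The paper makes the second step slightly more explicit by writing $\Sing(\mathcal{N}(V))=\V(J)$ and $\Sing(V)=\Zi(J)$ for the same ideal $J\subset\mathcal{N}(R^n)$, which is exactly the identification you flag as the main obstacle.
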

\begin{proof}
	As previously recalled, we see both quantities $\widetilde{\Cent(V)}$ and $\Sp_c \mathcal{N}(V)$ in $\widetilde{R^n}=\Sp_r R[x_1,\ldots,x_n]$.

At the Zariski spectrum level, one has $\Sing(\mathcal{N}(V))=\V(J)$, whereas
at the geometrical level on a has $\Sing(V)=\Zi(J)$, where $J$ is an ideal of $\mathcal{N}(R^n)$ containing $I$.
Hence, at the real spectrum level, one gets $\widetilde{\Sing(\mathcal{N}(V))}=\widetilde{\Sing(V)}$ where the first tilde send a Zariski constructible subset of $\Sp \mathcal{N}(V)$ to a constructible subset of $\widetilde{R^n}$ (see just before Proposition \ref{abstract764}) and the second tilde is the usual one from $R^n$ to $\widetilde{R^n}$.

Then, one derives $\widetilde{\Reg(\mathcal{N}(V))}=\widetilde{\Reg(V)}$
and 
$\overline{\widetilde{\Reg(\mathcal{N}(V))}}=\overline{\widetilde{\Reg(V)}}=\widetilde{\overline{\Reg(V)}^E}$.
Using Proposition \ref{abstract764} we get $\Sp_c\mathcal{N}(V)=\widetilde{\Cent(V)}$. 
\end{proof}

Then, one gets a similar statement than in the algebraic case. 

\begin{prop} \label{geomcentralNashHilbert17} (Central Nash Hilbert 17th Property):\\
Let $V$ be an irreducible Nash set. Let $f\in A=\mathcal{N}(V)$ and $\C=A\cap \sum \K(A)^2$.
The following properties are equivalent:
	\begin{enumerate}
\item $f\geq 0$ on $\Cent V$.
\item There exist $p,q$ in $\C$ such that $fq=p+f^{2m}$. 
\item There exist $p,q\in\C$ such that $q^2 f=p$ and $(\Zi(q)\cap \Cent V)\subset(\Zi(f)\cap \Cent V)$.
\item $f\in\C$.
\end{enumerate}
\end{prop}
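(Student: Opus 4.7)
The plan is to deduce the statement from the abstract Hilbert 17th property, namely Proposition \ref{centralHilbert17}, applied to $A=\mathcal{N}(V)$, which is an excellent domain whose fraction field is formally real (since $V$ admits real regular points, so $\Sp_r A\neq\emptyset$). The bridge between the geometric side and the algebraic side is furnished by Lemma \ref{CentralSpectrumNash}, identifying $\Sp_c\mathcal{N}(V)$ with $\widetilde{\Cent V}$, together with the homeomorphism $\Sp_r\mathcal{N}(V)\simeq\widetilde V$ from \cite[Prop. 8.8.1]{BCR}.

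The first step will be to translate property (1) into the abstract condition ``$f\geq 0$ on $\Sp_c A$''. Since Nash functions are semialgebraic and $\Cent V$ is a semialgebraic subset of $V$ (as the Euclidean closure of $\Reg V$), the tilde dictionary between semialgebraic and constructible subsets extends from $R[x_1,\ldots,x_n]$ to $\mathcal{N}(V)$ via the above homeomorphism. Thus, applying tilde to the inclusion $\Cent V\subset\{f\geq 0\}$ yields $\Sp_c A\subset\overline{\S}(f)$ by Lemma \ref{CentralSpectrumNash}, giving one direction; the converse is immediate by restricting orderings to evaluations at real closed points. In the same way, the inclusion $\Zi(q)\cap\Cent V\subset\Zi(f)\cap\Cent V$ translates through tilde to the abstract inclusion $\Zi^c(q)\subset\Zi^c(f)$, because tilde commutes with finite intersection of semialgebraic sets and sends $\Zi(g)$ to $\Zi^r(g)$ for any $g\in A$. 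As properties (2) and (4) are intrinsically algebraic and need no translation, the four conditions of the statement correspond term by term to those of Proposition \ref{centralHilbert17} applied to $A=\mathcal{N}(V)$, and their equivalence follows at once.

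The main obstacle I anticipate is the careful justification that the tilde dictionary of \cite[Thm. 7.2.3]{BCR}, along with its formal manipulations (preservation of inclusions, behavior under finite intersections, identification of $\widetilde{\Cent V}$ with $\Sp_c A$), genuinely carries over from polynomial to Nash functions on an irreducible Nash set. This, however, should follow directly from \cite[Prop. 8.8.1]{BCR} identifying $\Sp_r\mathcal{N}(R^n)$ with $\widetilde{R^n}$ and from the fact that any Nash-defined inequality is already a semialgebraic condition, making the argument an exact parallel of the algebraic case treated in Proposition \ref{geomcentralHilbert17}.
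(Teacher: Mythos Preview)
Your proposal is correct and follows essentially the same approach as the paper: both reduce to the abstract Proposition \ref{centralHilbert17} applied to $A=\mathcal{N}(V)$, using Lemma \ref{CentralSpectrumNash} together with the tilde dictionary (equivalently, the Artin-Lang property) to translate the geometric condition $f\geq 0$ on $\Cent V$ into $f\geq 0$ on $\Sp_c A$. Your treatment is in fact slightly more explicit than the paper's, since you also spell out the translation of condition (3) via $\widetilde{\Zi(g)\cap\Cent V}=\Zi^c(g)$, whereas the paper's proof simply says ``conclude using Proposition \ref{centralHilbert17}'' after handling condition (1).
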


\begin{proof}
Saying $f\geq 0$ on $\Cent V$ is equivalent to say that the semialgebraic subset $S=\{f<0\}\cap \Cent V$ of $R^n$ is empty.
By the Artin-Lang property (cf \cite[Thm. 4.1.2]{BCR}) and Lemma \ref{CentralSpectrumNash}, it is equivalent to the emptiness of $\widetilde{S}=\S(-f)\cap \Sp_c A=\S^c (-f)\subset \Sp_r A$ i.e $f\geq 0$ on $\Sp_c A$.
One may conclude using Proposition \ref{centralHilbert17}.	
\end{proof}

\begin{prop}\label{Nashalypc-PSS}(Nash Central Positivstellens\"atze for surfaces):\\
	Let $f,f_1,\ldots,f_r$ in $A=\mathcal{N}(V)$ the ring of Nash functions  on an irreducible Nash set $V$ of dimension $\leq 2$.	
	Let $\C=A\cap \sum \K(A)^2$. Then,
	\begin{enumerate}
		\item $f\geq 0$ on $\overline{\S}(f_1,\ldots,f_r)\cap \Cent(V)$ if and only if $fq=p+f^{2m}$ where $p,q$ are in $\C[f_1,\ldots,f_r]$.
		\item $f>0$ on $\overline{\S}(f_1,\ldots,f_r)\cap \Cent(V)$ if and only if $fq=1+p$ where $p,q$ are in $\C[f_1,\ldots,f_r]$.		
		\item $f=0$ on $\overline{\S}(f_1,\ldots,f_r)\cap \Cent(V)$ if and only if $f^{2m}+p=0$
		where $p$ is in $\C[f_1,\ldots,f_r]$.		
	\end{enumerate}	
\end{prop}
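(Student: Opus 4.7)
The plan is to mimic exactly the strategy used for the algebraic version (Proposition \ref{geoc-PSSSurf}), replacing each geometric/algebraic ingredient with its Nash counterpart already established in the paper. Concretely, I want to reduce the three equivalences to the abstract statement of Proposition \ref{c-PSSSurf} applied to $A=\mathcal{N}(V)$, via translation between the geometry of $V$ and the real spectrum of $A$.

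First I would translate the geometric positivity hypotheses into statements about the central spectrum of $A$. Using the homeomorphism $\Sp_r\mathcal{N}(V)\simeq\widetilde V$ (Prop. 8.8.1 of \cite{BCR}) together with Lemma \ref{CentralSpectrumNash}, the set $\overline{\S}(f_1,\ldots,f_r)\cap\Cent(V)$ corresponds, via the tilde map, exactly to the constructible subset $\overline{\S}^c(f_1,\ldots,f_r)\subset\Sp_rA$. Then the Artin–Lang / Tarski–Seidenberg transfer principle applied in the Nash setting (using that a semialgebraic subset of $V$ is empty if and only if its constructible avatar is empty, which uses only the homeomorphism above) yields that for any $f\in A$,
\[
f\geq 0\text{ on }\overline{\S}(f_1,\ldots,f_r)\cap\Cent(V)\iff f\geq 0\text{ on }\overline{\S}^c(f_1,\ldots,f_r),
\]
and similarly with $>0$ and $=0$.

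Next I would verify the stability hypothesis of Proposition \ref{c-PSSSurf}, namely $\sta(\Sp_rA\setminus\Sp_c A)\leq 1$. Through the homeomorphism $\Sp_r\mathcal{N}(V)\simeq\widetilde V$, the open set $\Sp_rA\setminus\Sp_c A$ corresponds to $\widetilde{V\setminus\Cent V}$; since every polynomial is a Nash function, the Nash stability index of a basic open in the non-central locus is bounded by the polynomial one, and by Bröcker–Scheiderer this is controlled by $\dim(V\setminus\Cent V)$. Since $V\setminus\Cent V$ has empty interior in $V$, its dimension is strictly less than $\dim V\leq 2$, so $\sta(\Sp_rA\setminus\Sp_c A)\leq 1$.

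Once these two reductions are in place, the three equivalences follow immediately from Proposition \ref{c-PSSSurf} (1), (2), (3) respectively applied to $A=\mathcal{N}(V)$ and the elements $f,f_1,\ldots,f_r$. The main obstacle I expect is justifying the stability index bound in the Nash setting: a direct Bröcker–Scheiderer theorem for arbitrary Nash rings is not obvious, so the argument must go through the homeomorphism with $\widetilde V$ and rely on the fact that any Nash-semialgebraic description of a subset of $\widetilde V$ can, for the purpose of counting inequalities, be replaced by a polynomial description of the same semialgebraic set — here it is crucial that the constructible subsets of $\Sp_r\mathcal{N}(V)$ are in bijection with the semialgebraic subsets of $V$, so the polynomial Bröcker–Scheiderer bound transfers.
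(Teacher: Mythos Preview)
Your global strategy coincides with the paper's: translate the three geometric conditions on $\overline{\S}(f_1,\ldots,f_r)\cap\Cent(V)$ to conditions on $\overline{\S}^c(f_1,\ldots,f_r)\subset\Sp_r\mathcal{N}(V)$ via Lemma \ref{CentralSpectrumNash} and Artin--Lang, and then apply Proposition \ref{c-PSSSurf}. That part is fine.

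The gap is exactly where you suspect it, in the stability index bound, and your proposed justification does not close it. You argue that ``since every polynomial is a Nash function, the Nash stability index of a basic open in the non-central locus is bounded by the polynomial one''. This inequality is valid only once you know that the given set is a \emph{polynomial} basic open; but you start from a Nash basic open $S=\{g_1>0,\ldots,g_k>0\}$ with $g_i\in\mathcal{N}(V)$, and nothing in your argument shows that $S$ can also be written as a finite conjunction of strict polynomial inequalities. The bijection between constructible subsets of $\Sp_r\mathcal{N}(V)$ and semialgebraic subsets of $V$ respects the Boolean structure, not the descriptive complexity ``basic open in $\leq k$ inequalities'', so it does not transfer the Br\"ocker--Scheiderer bound by itself.

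The paper fills this gap with the Artin--Mazur description of Nash functions \cite[Thm.~8.4.4]{BCR}: given Nash functions $g_1,\ldots,g_k$ there exist a nonsingular algebraic variety $X$, an open semialgebraic $W\subset X$, a Nash diffeomorphism $\sigma$ onto $W$, and \emph{polynomial} functions $h_1,\ldots,h_k$ on $X$ with $g_i=h_i\circ\sigma$. Thus the Nash basic open $\{g_i>0\}$ is carried by $\sigma$ to a genuine polynomial basic open $\{h_i>0\}$ in $X$, to which the algebraic Br\"ocker--Scheiderer theorem applies; pulling back the resulting short polynomial description through the Nash diffeomorphism $\sigma^{-1}$ yields a Nash description with the same number of inequalities. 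This is the mechanism that makes the dimension bound on the stability index carry over to the Nash ring, and it is the ingredient missing from your sketch.
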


\begin{proof}
To use the general framework we have introduced, we have first to show that the stability index corresponds to the dimension also in the Nash setting. We use the Artin-Mazur description of Nash functions (cf \cite[Thm. 8.4.4]{BCR}) which states that for any Nash functions $f_1,\ldots,f_r:R^n\to R$ there is a nonsingular algebraic set $X\subset R^q$ of dimension $n$, an open semi-algebraic subset $W$ of $X$, a Nash diffeomorphism $\sigma:R^n\to W$ and some polynomial function $g_1,\ldots,g_r$ on $W$ such that $f_i=g_i\circ \sigma$. Hence, any description of a basic open subset $\{f_1>0,\ldots,f_r>0\}$ in $V\subset R^n$ where the $f_i$'s are Nash functions can be translated via $\sigma$ into a basic open subset $\{g_1>0,\ldots,g_r>0\}$ in $W$ where the $g_i$'s are polynomial functions. Then, one may apply the Br\"ocker Scheiderer theorem for the stability index of algebraic varieties.

	Let us show now the first assertion, one proceeds likewise for the others.
	
	Saying that $f\geq 0$ on $\overline{\S}(f_1,\ldots,f_r)\cap \Cent(V)$ means 
	that the semialgebraic subset 
	$S=\{f<0,f_1\geq 0,\ldots, f_r\geq 0\}\cap \Cent(V)$ of $V$ is empty.
	By the Artin-Lang property (cf \cite[Thm. 4.1.2]{BCR}) and Lemma \ref{CentralSpectrumNash}, it is equivalent to the emptiness of $\widetilde{S}=\S^c(-f)\cap \overline{\S}^c(f_1,\ldots,f_r)\subset \Sp_r A$.
	It means $f\geq 0$ on $ \overline{\S}^c(f_1,\ldots,f_r)$.
	We may then use Proposition \ref{c-PSSSurf} to conclude.
\end{proof}

\subsubsection{In the analytic setting}
Less classical than in the real algebraic setting, one may define the central locus of an irreducible real analytic set as the euclidean closure of the set of all regular points.

Let $\Omega$ be a real analytic variety and $C$ be a compact global semianalytic subset of $\Omega$.
Let $\mathcal{O}(\Omega_C)$ be the ring of all germs of all real analytic functions at $C$ ; it is a noetherian ring. For $X_C$ a semianalytic germ of $\Omega_C$, we set 
$A=\mathcal{O}(X_C)=\mathcal{O}(\Omega_C)/\mathcal{I}(X_C)$ to be the ring of analytic function germs of $X_C$ where $\mathcal{I}(X_C)$ is the ideal of functions germs vanishing at 
$X_C$. In all the following, we consider $X_C$ irreducible, meaning that $A$ is a domain and $\K(A)$ is the field of germs of meromorphic functions. 

We are mainly interested in germs of analytic functions at a point and also by the analytic functions on a compact subset both cases covered by our framework.
Note that without the compactness assumption, the ring $A$ would not be so nice, for instance not noetherian.

From now on, we consider that $X_C$ is a subanalytic set germ of $\Omega_C$.
For $x\in X_C$, we say that $x$ is regular if the ring $\mathcal{O}(X_C)_{m_x}$ is regular where $m_x$ is the ideal associated to $x$.
One may then define $\Cent(X_C)$ the analytic central locus germ of $X_C$ to be the euclidean closure of the set of regular points in $X_C$. 

Since $C$ is compact, by \cite[VIII Thm. 7.2]{ABR}, $\Cent(X_C)$ is a closed semianalytic subset germ of $X_C$ and it can be defined by a \textit{finite} of a conjonction of inequalities. 
 
Again since $C$ is compact, one may use the key tool taken from \cite[VIII Prop. 8.2]{ABR} that we recall for the convenience of the reader:

\vskip 0,2cm\noindent\textbf{Proposition:}
The tilde correspondance 
$$\cup_i\{f_{i1}>0,\ldots f_{ir_i}>0,g_{i1}=0,\ldots,g_{is_i}=0\}\mapsto
\cup_i\{\alpha\in \widetilde{X_C\mid }f_{i1}(\alpha)>0,\ldots$$
$$\ldots f_{ir_i}(\alpha)>0,g_{i1}(\alpha)=0,\ldots,g_{is_i}(\alpha)=0\}
$$
induces an isomorphism between the boolean algebra of semianalytic subset germs of $X_C$ onto that of constructible subsets of $\widetilde{X_C}=\Sp_r \mathcal{O}(X_C)$.
\vskip 0,2cm

Beware that without the compactness hypothesis, the tilde operation is no more well defined and we only have a weak Artin-Lang property.

From this, one may derive the counterpart of the classical properties on the tilde operator between semialgebraic subsets of $\RR^n$ and constructible subsets of $\widetilde{\RR^n}$. 
One also gets an Artin-Lang property, namely $S=\emptyset$ if and only if $\widetilde{S}=\emptyset$ for sets $S$ as described in the proposition. 

Let us write down the compatibility of the tilde operator with closure :
\begin{lem}\label{Compatibclosuretildeanalytic}
	Let $S$ be a semianalytic subset germ of $X_C$. Then,
$\overline{\widetilde{S}}=\widetilde{\overline{S}}$.				
\end{lem}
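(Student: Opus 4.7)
The plan is to prove both inclusions $\overline{\widetilde{S}}\subset\widetilde{\overline{S}}$ and $\widetilde{\overline{S}}\subset\overline{\widetilde{S}}$ separately, making essential use of the Artin--Lang property stated in the proposition just above the lemma and of the tilde correspondence between semianalytic subset germs of $X_C$ and constructible subsets of $\widetilde{X_C}$. First I would note that by \cite[VIII Thm.~7.2]{ABR} the Euclidean closure $\overline{S}$ is again a closed semianalytic subset germ of $X_C$, so $\widetilde{\overline{S}}$ is a well-defined constructible subset of $\widetilde{X_C}$, and obviously $\widetilde{S}\subset\widetilde{\overline{S}}$.

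For the inclusion $\overline{\widetilde{S}}\subset\widetilde{\overline{S}}$ it suffices to check that $\widetilde{\overline{S}}$ is closed in the real spectrum topology. The complement $X_C\setminus\overline{S}$ is an open semianalytic subset germ, and in the compact setting of $C$ it can be written as a finite union of basic open germs of the form $\{f_{i,1}>0,\ldots,f_{i,r_i}>0\}$ (compare \cite[VIII]{ABR}). Taking tildes, $\widetilde{X_C\setminus\overline{S}}$ becomes a finite union of principal open subsets of $\widetilde{X_C}$, hence is open, so $\widetilde{\overline{S}}$ is closed. Since it already contains $\widetilde{S}$, it must contain $\overline{\widetilde{S}}$.

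For the reverse inclusion $\widetilde{\overline{S}}\subset\overline{\widetilde{S}}$ the Artin--Lang property provides a clean shortcut. Given $\alpha\in\widetilde{\overline{S}}$ and any basic open neighborhood $\widetilde{U}$ of $\alpha$ in $\widetilde{X_C}$, corresponding to a basic open semianalytic germ $U$ of $X_C$, the set $\widetilde{U}\cap\widetilde{\overline{S}}$ is a nonempty constructible subset. By Artin--Lang the corresponding semianalytic germ $U\cap\overline{S}$ is nonempty. Picking $x\in U\cap\overline{S}$ and using that $U$ is Euclidean-open while $x$ lies in the Euclidean closure of $S$, we obtain $U\cap S\neq\emptyset$; applying Artin--Lang again gives $\widetilde{U}\cap\widetilde{S}=\widetilde{U\cap S}\neq\emptyset$. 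Since the basic open sets $\widetilde{U}$ form a basis of neighborhoods in $\widetilde{X_C}$, we conclude $\alpha\in\overline{\widetilde{S}}$.

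The main obstacle I expect is the first step, namely the finite description of the open semianalytic germ $X_C\setminus\overline{S}$ by strict inequalities. This is precisely where the compactness of $C$ enters in an essential way, and it is the reason the weak Artin--Lang property available in the noncompact case would not suffice; the second inclusion, by contrast, is a purely formal consequence of the Artin--Lang dictionary once the basis of constructible neighborhoods is in hand.
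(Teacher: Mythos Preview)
Your proof is correct and follows essentially the same route as the paper's: both inclusions are argued exactly as you do, using the tilde correspondence and Artin--Lang to pass back and forth between emptiness of semianalytic germs and of constructible subsets. The only cosmetic difference is in the first inclusion: the paper shows $\widetilde{\overline{S}}$ is closed directly, by invoking \cite[VIII Thm.~7.2]{ABR} to write $\overline{S}$ as a \emph{finite} union of basic closed germs $\{f_1\geq 0,\ldots,f_r\geq 0\}$ (so its tilde is a finite union of closed constructible sets), whereas you pass to the complement and describe $X_C\setminus\overline{S}$ as a finite union of basic open germs; both variants rest on the same finiteness theorem and yield the same conclusion.
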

\begin{proof}
	One has obviously $\widetilde{S}\subset 
	\widetilde{\overline{S}}$. 
	By compacity of $C$, we get from 
	\cite[VIII Thm. 7.2]{ABR} that
	$\overline{S}$ is a compact subanalytic subset germ and it can be written as a \textit{finite} union of 
	sets of the form $\{f_{1}\geq 0,\ldots f_{r}\geq 0\}$, hence 
	$\widetilde{\overline{S}}$ is closed and 
	$\overline{\widetilde{S}}\subset\widetilde{\overline{S}}$.
	
	Let us show the converse inclusion.
	Let us consider $\alpha\in 	\widetilde{\overline{S}}$. 	
	Let $V$ be an open subset containing $\alpha$, one may assume that $V=\widetilde{U}$. Hence, 	$\alpha\in \widetilde{\overline{S}}\cap \widetilde{U}
	=\widetilde{\overline{S}\cap U}$.
	Since $\widetilde{\overline{S}\cap U}\not=\emptyset$, one has also 
	$\overline{S}\cap U\not=\emptyset$. Since $U$ is open, one gets ${S}\cap U\not=\emptyset$ and hence $\widetilde{S}\cap \widetilde{U}\not=\emptyset$. We have shown that $\widetilde{\overline{S}}\subset \overline{\widetilde{S}}$.
	
\end{proof}

We recall from \cite[VIII Thm. 8.4]{ABR}
that $A=\mathcal{O}(X_C)$ is an excellent ring.
From Proposition \ref{abstract764} and Lemma
\ref{Compatibclosuretildeanalytic}, one gets that
	$$\widetilde{\Cent(X_C)}=\Sp_c\mathcal{O}(X_C).$$

To conclude, we proceed as in the proof of Proposition \ref{Nashalypc-PSS}. Note again that the stability index coincides with the dimension by \cite[VIII Thm. 6.3]{ABR} in the analytic setting, and we get: 
\begin{prop}\label{ranalypc-PSS}(Analytic Central Positivstellens\"atze for surfaces):
	Let $f,f_1,\ldots,f_r$ in $A=\mathcal{O}(X_C)$ and assume $\dim X_C\leq 2$.	
	Let $\C=A\cap \sum \K(A)^2$. Then,
	\begin{enumerate}
		\item $f\geq 0$ on $\{f_1\geq 0,\ldots, f_r\geq 0\}\cap \Cent(X_C)$ if and only if $fq=p+f^{2m}$ where $p,q$ are in $\C[f_1,\ldots,f_r]$.
		\item $f>0$ on $\{f_1\geq 0,\ldots, f_r\geq 0\}\cap \Cent(X_C)$ if and only if $fq=1+p$ where $p,q$ are in $\C[f_1,\ldots,f_r]$.		
		\item $f=0$ on $\{f_1\geq 0,\ldots, f_r\geq 0\}\cap \Cent(X_C)$ if and only if $f^{2m}+p=0$
		where $p$ is in $\C[f_1,\ldots,f_r]$.		
	\end{enumerate}	
\end{prop}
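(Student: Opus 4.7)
The strategy is to reduce the geometric assertion to the abstract central Positivstellensätze of Proposition \ref{c-PSSSurf}, following verbatim the pattern of the Nash case (Proposition \ref{Nashalypc-PSS}). Two ingredients make this reduction possible: first, the Artin-Lang-type tilde correspondence of \cite[VIII Prop. 8.2]{ABR} for semianalytic germs, and second, the identification $\widetilde{\Cent(X_C)}=\Sp_c \mathcal{O}(X_C)$ that has just been established right before the statement using the compatibility of tilde with closure (Lemma \ref{Compatibclosuretildeanalytic}) and Proposition \ref{abstract764}.

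First I would secure the stability bound $\sta(\Sp_r A\setminus \Sp_c A)\leq 1$. Since $A$ is excellent and the singular locus of $X_C$ is contained in $X_C\setminus \Cent(X_C)$, this complementary set has dimension at most $\dim X_C-1\leq 1$. Applying \cite[VIII Thm. 6.3]{ABR} (the analytic analogue of Bröcker-Scheiderer), the stability index in the analytic setting coincides with the dimension, so via the tilde correspondence one gets $\sta(\Sp_r A\setminus \Sp_c A)\leq \sta(\widetilde{X_C\setminus \Cent(X_C)})\leq 1$, exactly as in Corollary \ref{centraldim2} for algebraic varieties.

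Once the stability bound is in place, the three items are treated uniformly. I will focus on (1), the others being identical with the matching item from Proposition \ref{c-PSSSurf}. The nontrivial implication assumes $f\geq 0$ on $\{f_1\geq 0,\ldots,f_r\geq 0\}\cap \Cent(X_C)$, which amounts to saying that the semianalytic germ $S=\{f<0,f_1\geq 0,\ldots,f_r\geq 0\}\cap\Cent(X_C)$ is empty. By \cite[VIII Prop. 8.2]{ABR} combined with $\widetilde{\Cent(X_C)}=\Sp_c A$, emptiness of $S$ translates into emptiness of $\widetilde S=\S^c(-f)\cap \overline{\S}^c(f_1,\ldots,f_r)$, that is, $f\geq 0$ on $\overline{\S}^c(f_1,\ldots,f_r)$. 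Proposition \ref{c-PSSSurf}(1) then supplies the identity $fq=p+f^{2m}$ with $p,q\in\C[f_1,\ldots,f_r]$.

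The only place where real work was required is the transfer of the geometric/abstract dictionary to the analytic setting, but this is already encapsulated in the previously established tilde correspondence and in the identification of $\Sp_c A$ with $\widetilde{\Cent(X_C)}$; once these are in hand there is no obstacle. The compactness of $C$ is what keeps everything semianalytic and the tilde well behaved, and no further ingredient is needed.
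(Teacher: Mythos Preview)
Your approach is essentially identical to the paper's: it too refers back to the Nash proof (Proposition \ref{Nashalypc-PSS}), invokes \cite[VIII Thm. 6.3]{ABR} for the stability index in the analytic setting, and relies on the already established identification $\widetilde{\Cent(X_C)}=\Sp_c A$ together with the tilde correspondence to reduce to Proposition \ref{c-PSSSurf}.

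One small slip: you wrote that ``the singular locus of $X_C$ is contained in $X_C\setminus \Cent(X_C)$'', but the inclusion goes the other way. Since $\Cent(X_C)=\overline{\Reg(X_C)}$, one has $X_C\setminus\Cent(X_C)\subset X_C\setminus\Reg(X_C)=\Sing(X_C)$, and it is this inclusion (together with excellence of $A$, which bounds $\dim\Sing(X_C)\leq\dim X_C-1$) that yields $\sta(\Sp_r A\setminus\Sp_c A)\leq 1$, exactly as in Corollary \ref{centraldim2}. With that corrected, your argument is complete and matches the paper's.
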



\section{Sums of squares of rational continuous functions on the central spectrum}

From Artin solution of the 17th problem of Hilbert, we have already mentioned that if $p\in R[x_1,\ldots,x_n]$ is non-negative on $R^n$ then $p$ is a sum of squares of rational functions. From \cite{Kre}, it is possible to find such a sum of squares such that the rational functions appearing can be extended continuously to $R^n$ for the Euclidean topology i.e are rational continuous functions on $R^n$. We denote by $\K^0(R^n)$ the ring of rational continuous functions on $R^n$. Rational continuous functions are called regulous when they are still rational continuous by restriction to any subvariety. Rational continuous and regulous functions are introduced and studied in \cite{KN}, \cite{FHMM}, \cite{Mo} and \cite{BFMQ}.
The above result involving sums of squares of rational continuous functions can be generalized as it is done in \cite[Thm. 6.1]{FMQ}, namely $f\in\K^0(R^n)$ is nonnegative on $R^n$ if and only if $f\in\sum\K^0(R^n)^2$. The proof of this result in \cite{FMQ} is over $\RR$ but it is also valid over any real closed field in place of $\R$. 

\smallskip

We may wonder if it is possible to get a continuity property in Theorem \ref{precentralHilbert17} and Proposition \ref{centralHilbert17}. As continuity is a topological notion, we choose here to only look at a continuous version of Proposition \ref{centralHilbert17}, the one associated with central orderings of a topological nature. To do that, we recall some material about abstract continuity on the real spectrum which is mainly taken from \cite{ABR}.

Any $f\in A$ may be associated to a function defined on $\Sp_r A$, assigning $\alpha\mapsto f(\alpha)\in R_\alpha$ where $R_\alpha$ is a real closure of $k(\supp(\alpha))$. It does not give functions in the usual sense since the $R_\alpha$'s vary.
One may then define abstract semialgebraic functions on $\Sp_r A$ given by a first order formula with parameters in $A$ (see \cite[II 5]{ABR}). For instance, for $p\in A$ and $q\in A\setminus\{0\}$, one may define the abstract semialgebraic function $f$ by setting $f(\alpha)=\frac{p(\alpha)}{q(\alpha)}$ whenever $q(\alpha)\not=0$ and $f(\alpha)=0$ otherwise.
One may also define functions on a proconstructible subset $Y$ of $\Sp_r A$. 
To end, one says that an abstract semialgebraic function $f$ is continuous on $Y$ if, for any specialization $\beta\to \alpha$ in $Y$, one has $f(\beta)\in W_{\beta\alpha}$ and 
$\lambda_{\beta\alpha}(f(\beta))=f(\alpha)$ where $W_{\beta\alpha}$ and $\lambda_{\beta\alpha}$ are respectively the valuation ring and the place associated to the specialization $\beta\to \alpha$ (see \cite[II 3.10]{ABR}). Let $f$ be an abstract semialgebraic function on $\Sp_c A$, we say that $f$ is rational continuous on $\Sp_{c} A$
if $f$ is continuous on $\Sp_{c} A$ and if there exist $p\in A$ and $q\in A\setminus\{0\}$, such that $f(\alpha)=\frac{p(\alpha)}{q(\alpha)}$ whenever $\alpha\in\Sp_c A\setminus \Zi^c (q)$ . We denote by $\K^0(\Sp_{c} A)$ the ring of rational continuous functions on $\Sp_c A$.

\begin{prop}
\label{integre}
The ring $\K^0(\Sp_c A)$ is a domain whose fraction field is $\K(A).$
\end{prop}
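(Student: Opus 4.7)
My plan is to construct a natural ring homomorphism $\Phi \colon \K^0(\Sp_c A) \to \K(A)$, prove it is injective, and deduce both claims from the resulting sandwich $A \subset \K^0(\Sp_c A) \subset \K(A)$: being integral is automatic as a subring of a field, and the intermediate field $\Frac(\K^0(\Sp_c A))$ must then coincide with $\Frac(A) = \K(A)$.

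First I would build $\Phi$. Given $f \in \K^0(\Sp_c A)$, I pick a representation $f(\alpha) = p(\alpha)/q(\alpha)$ valid on $\Sp_c A \setminus \Zi^c(q)$, with $p \in A$, $q \in A \setminus \{0\}$, and set $\Phi(f) = p/q \in \K(A)$. The well-definedness rests on the fact that every $\alpha \in \Sp_r \K(A)$ already lies in $\Sp_c A$ (it is the set $\Sp_r \K(A)$ whose closure defines $\Sp_c A$) and has support $(0)$, so that $q(\alpha) \neq 0$ for every $q \in A \setminus \{0\}$. If $f$ admits a second representation $p'/q'$, then for every $\alpha \in \Sp_r \K(A)$ one has $(pq' - p'q)(\alpha) = 0$; hence $pq'-p'q$ lies in the intersection of all supports of orderings of $\K(A)$, which is $(0)$, and therefore $p/q = p'/q'$ in $\K(A)$. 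The same generic-value identification makes $\Phi$ a ring morphism, since addition and multiplication of rational functions are computed pointwise on the common dense open on which representations are valid.

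Next I would prove injectivity. Assume $\Phi(f) = 0$; then for any representation $f = p/q$ of $f$ we have $p = 0$ in $A$, so $f$ vanishes identically on $U := \Sp_c A \setminus \Zi^c(q)$. Since $\Sp_r \K(A) \subset U$ and $\Sp_c A = \overline{\Sp_r \K(A)}$, the set $U$ is dense in $\Sp_c A$. Now for any $\alpha \in \Sp_c A$, the very definition of a central ordering provides $\beta \in \Sp_r \K(A)$ with $\beta \to \alpha$, and the continuity of $f$ along this specialization gives
$$f(\alpha) = \lambda_{\beta\alpha}(f(\beta)) = \lambda_{\beta\alpha}(0) = 0,$$
so $f \equiv 0$ on $\Sp_c A$.

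Finally I would observe that polynomial functions are trivially rational continuous on $\Sp_c A$, producing an inclusion $A \hookrightarrow \K^0(\Sp_c A)$ whose composition with $\Phi$ is the standard inclusion $A \hookrightarrow \K(A)$. The chain $A \subset \K^0(\Sp_c A) \subset \K(A)$ then yields both conclusions at once. The delicate part of the argument — and the only place where care is needed — is in verifying well-definedness and injectivity of $\Phi$ simultaneously: both hinge on combining the density of $\Sp_r \K(A)$ in $\Sp_c A$ with the fact that orderings in $\Sp_r \K(A)$ have zero support, and it is the continuity condition formulated via the places $\lambda_{\beta\alpha}$ that lets vanishing on this dense subset propagate to the whole central spectrum.
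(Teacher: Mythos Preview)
Your proof is correct and follows essentially the same route as the paper: define the natural map to $\K(A)$, prove injectivity using that every central ordering is a specialization of some $\beta\in\Sp_r\K(A)$ together with continuity along the place $\lambda_{\beta\alpha}$, and conclude from the sandwich $A\subset\K^0(\Sp_c A)\subset\K(A)$. You are in fact more careful than the paper about well-definedness of $\Phi$ (which the paper leaves implicit), and you avoid its detour through the central Nullstellensatz by noting directly that $p/q=0$ in $\K(A)$ forces $p=0$.
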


\begin{proof}
Consider the map $\K^0 (\Sp_{c} A)\to \K(A)$ which send $f\in\K^0 (\Sp_{c} A)$ to the class of $p/q$ in $\K(A)$ with $p\in A$, $0\not=q\in A$, and $f(\alpha)=\frac{p(\alpha)}{q(\alpha)}$ whenever $\alpha\in\Sp_c A\setminus\Zi^c(q)$. We have to prove this map is injective and thus we assume the class of $p/q$ in $\K(A)$ is $0$. It follows that $pq$ vanishes on $\Sp_c A$ i.e $pq\in\I (\Zi^c ((0)))$. Since $\K(A)$ is formally real then $(0)$ is a central ideal of $A$ (Proposition \ref{existcentralideal}) and thus, using the central Nullstellensatz (Proposition \ref{CentralNSS}) we get $pq=0$ in $A$. Since $A$ is a domain and $q\not=0$ then it follows that $p=0$. Let $\alpha\in\Sp_c A$. If $\alpha\not\in \Zi^c(q)$ then $f(\alpha)=\frac{p(\alpha)}{q(\alpha)}=0$. Assume now $\alpha\in\Zi^c(q)$. By definition of a central ordering and by Proposition \ref{orderingnull}, there is $\beta\in\Sp_c A$ such that $\supp(\beta)=(0)$ and $\beta\to\alpha$. Clearly $\beta \not\in\Zi^c(q)$ and thus $f(\beta)=\frac{p(\beta)}{q(\beta)}=0\in W_{\beta\alpha}$ and 
$\lambda_{\beta\alpha}(f(\beta))=f(\alpha)=0$. It follows that $f=0$ in $\K^0(\Sp_c A)$ and the proof is done.

Since $A\subset \K^0 (\Sp_c A)\subset \K(A)$ then it follows that $\K(\K^0 (\Sp_c A))=\K(A)$.
\end{proof}

If $V$ an irreducible irreducible algebraic variety over $R$, we denote by $\K^0(\Cent V(R))$ the ring of rational continuous functions on $\Cent V(R)$. These functions are defined in the same way as for $R^n$ (see \cite{FMQ-futur2}). It is then clear that the restriction to $\Cent V(R)$ of an element of $\K^0(\Sp_c R[V])$ is in $\K^0(\Cent V(R))$.

\smallskip

Let us define the following subcone of $\C=A\cap \sum \K(A)^2$ which is convenient to deal with continuity: $$\C^0=A\cap \sum \K^0(A)^2.$$

Adding a continuity property in Theorem \ref{precentralHilbert17} and Proposition \ref{centralHilbert17} would suggest, for a given $f\in A$, that 
$f(\Sp_c A)\geq 0$ if and only if $f\in\C^0$. 
Unfortunately, this equivalence is false i.e, in general, an element of $A$ nonnegative on $\Sp_{c} A$ is not necessarily a sum of squares in  $\K^0(\Sp_{c}A)$ as it is shown by the following: 

\begin{ex}
	\label{Whitney}
	Let $V$ be the Whitney umbrella with coordinate ring $A=\RR[V]=\RR[x,y,z]/(y^2-zx^2)$. Its normalization $V'$ is smooth and we have $B=\RR[V']=\RR[x,Y,z]/(Y^2-z)$. The ring morphism $A\to B$ associated to the normalization map $\pi':V'\to V$ is given by $x\mapsto x$, $y\mapsto Yx$ and $z\mapsto z$.
	Let $f=z\in A$. Since $f=(x/y)^2\in\K(A)^2$ then it follows from Proposition \ref{centralHilbert17} that $f\geq 0$ on $\Sp_{c} A$. 	
	\smallskip
	
	We prove now that $f\not\in\C^0$. Assume $f\in\C^0$ then we get, by restriction to $\Cent(V(\RR))$, 
	$$f=\sum_{i=1}^n f_i^2$$ with $f_i\in\K^0(\Cent V(\R))$. By composition with $\pi'_{\mid \RR}:V'(\RR)\to V(\RR)$ then we get $$g=f\circ\pi'=\sum\limits_{i=1}^n g_i^2$$ with $g_i=f_i\circ \pi'_{\mid \RR}\in\K^0(V'(\R))^2$ (note that $\pi'_{\mid \RR}$ is surjective on $\Cent V(\RR)$). Since $V'$ is smooth then the $g_i$ are regulous functions and thus are still rational continuous
	by restriction to a subvariety \cite{KN}. So we restrict our identity $g= \sum_{i=1}^n g_i^2$ to the curve $C\subset V'(\R)$ with equations $x=0$ and $Y^2=z$ and since the curve is smooth then the restriction of the $g_i$ to $C$ are regular functions \cite{Mo}. On $C$ we get 
	$$g=z=\sum_{i=1}^n (\frac{a_{i,1}(z)+a_{i,2}(z)Y}{b_{i,1}(z)+b_{i,2}(z)Y})^2$$
	with the $a_{i,j}$ and $b_{i,j}$ polynomials in $z$. But the fractions $\frac{a_{i,1}(z)+a_{i,2}(z)Y}{b_{i,1}(z)+b_{i,2}(z)Y}$ are composition by $\pi'_{\mid \RR}$ of continuous functions on the superior half of the $z$-axis in $V(\RR)$ and thus it follows from an easy calculus that $a_{i,2}=b_{i,2}=0$ for $i=1,\ldots,n$. The previous identity becomes impossible and the proof is done.
\end{ex}

\smallskip

Let us see now that we get this continuity property if and only if some $p$ and $q$ appearing in the identities of the second and the third statements of Proposition \ref{centralHilbert17} belong to $\C^0$. 

\begin{prop} \label{centralHilbert17continu}
	Let $f\in A$. The following properties are equivalent:
	\begin{enumerate}
		\item There exist $p,q$ in $\C^0$ such that $fq=p+f^{2m}$.
		\item There exist $p,q\in\C^0$ such that $q^2f=p$ and $\Zi^{c}(q)\subset\Zi^{c}(f)$.
		\item $f\in\C^0$.
	\end{enumerate}
\end{prop}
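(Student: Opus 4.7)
The plan is to establish the three implications $(3)\Rightarrow(1)\Rightarrow(2)\Rightarrow(3)$, following the skeleton of the proof of Proposition \ref{centralHilbert17} but being careful to keep the sums of squares inside $\K^0(A)$ rather than $\K(A)$. As a preliminary, $\C^0$ is a cone of $A$: the inclusion $A\subset \K^0(A)$ (part of Proposition \ref{integre}) gives $A^2\subset \sum \K^0(A)^2$, and $\sum \K^0(A)^2$ is stable under sums and products. Moreover $\C^0\subset \C$ since $\K^0(A)\subset \K(A)$.

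The implication $(3)\Rightarrow(1)$ is immediate: if $f\in \C^0$, take $m=1$, $q=f$ and $p=0$, so that $fq=f^2=p+f^{2m}$. For $(1)\Rightarrow(2)$, I would mimic the algebraic trick of the proof of $(2)\Rightarrow(3)$ in Proposition \ref{centralHilbert17}. From $fq=p+f^{2m}$ with $p,q\in\C^0$, multiplication by $f$ gives $f^2q=f(p+f^{2m})$, so setting $t=p+f^{2m}$ and $s=(f^2q)(p+f^{2m})$, one has $t^2f=s$ with $s,t\in \C^0$ (this cone contains all squares and is closed under sums and products). The inclusion $\Zi^c(t)\subset \Zi^c(f)$ follows via $\C^0\subset \C$ and the $\C$-convexity of the supports of central orderings.

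The main work, and principal obstacle, is $(2)\Rightarrow(3)$. Given $q^2f=p$ with $p,q\in \C^0$ and $\Zi^c(q)\subset \Zi^c(f)$, my plan is to write $p=\sum_{i=1}^n h_i^2$ with $h_i\in \K^0(A)$ and to define abstract semialgebraic functions $g_i$ on $\Sp_c A$ by $g_i(\alpha)=h_i(\alpha)/q(\alpha)$ when $q(\alpha)\neq 0$, and $g_i(\alpha)=0$ on $\Zi^c(q)$. Rationality in the sense of the definition of $\K^0(A)$ is clear: writing $h_i=u_i/v_i$ with $u_i,v_i\in A$, one has $g_i=u_i/(v_iq)$ off $\Zi^c(v_iq)$. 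The difficult part will be continuity along specializations. Given $\beta\to \alpha$ in $\Sp_c A$, the case $q(\beta)=0$ is trivial since $\Zi^c(q)$ is closed, so $\alpha\in\Zi^c(q)$ too and both sides vanish. If $q(\beta)\neq 0$, the equality $\sum g_j(\beta)^2=f(\beta)$ in $R_\beta$ yields the crucial bound $g_i(\beta)^2\leq f(\beta)$; since $f\in A$ implies $f(\beta)\in W_{\beta\alpha}$, convexity of this valuation ring places $g_i(\beta)^2$ inside $W_{\beta\alpha}$, and integral closedness gives $g_i(\beta)\in W_{\beta\alpha}$. If moreover $q(\alpha)\neq 0$, the continuity of $h_i$ and $q$ provides $\lambda_{\beta\alpha}(g_i(\beta))=h_i(\alpha)/q(\alpha)=g_i(\alpha)$ via the ratio formula; if instead $q(\alpha)=0$, the hypothesis $\Zi^c(q)\subset \Zi^c(f)$ gives $f(\alpha)=0$, so $\lambda_{\beta\alpha}(g_i(\beta))^2\leq f(\alpha)=0$, forcing $\lambda_{\beta\alpha}(g_i(\beta))=0=g_i(\alpha)$. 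Hence each $g_i\in \K^0(A)$, and since $\K^0(A)$ embeds in $\K(A)$ by Proposition \ref{integre}, the identity $f=\sum g_i^2$ transfers to $\K^0(A)$, giving $f\in A\cap \sum \K^0(A)^2=\C^0$.
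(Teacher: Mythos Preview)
Your argument is correct and follows essentially the same route as the paper: the cycle $(3)\Rightarrow(1)\Rightarrow(2)\Rightarrow(3)$ with the same algebraic manipulation for $(1)\Rightarrow(2)$ and the same specialization/valuation-ring analysis for $(2)\Rightarrow(3)$. Your treatment of $(2)\Rightarrow(3)$ is in fact slightly more careful than the paper's, which writes the numerators $g_i$ as elements of $A$ whereas a priori one only has $h_i\in\K^0(\Sp_c A)$; your use of the continuity of $h_i$ (rather than membership in $A$) to handle the case $q(\alpha)\neq 0$ is the right fix, and the rest of the argument is identical.
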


\begin{proof}
		Let us show (1) implies (2).
	Suppose there exist $p,q$ in $\C^0$ such that $fq=p+f^{2m}$. One may assume that $f\not=0$ and then $p+f^{2m}\not=0$ since $\K(A)$ is formally real.
	We set $P=(f^2q)(p+f^{2m})$ and $Q=p+f^{2m}$.  Clearly, $P,Q\in\C^0$.
	Following the proof of Theorem \ref{precentralHilbert17} and Proposition  \ref{centralHilbert17} 
	we get $Q^2f=P$ and $\Zi^{c}(Q)\subset \Zi^{c}(f)$.
	
	We prove (2) implies (3).	
	Assume there exist $P,Q\in\C^0$ such that $Q^2f=P$ and $\Zi^{c}(Q)\subset\Zi^{c}(f)$. Hence, one may write $f=\sum f_i^2$ where $f_i=\frac{g_i}{Q}$ with $g_i\in A$. Clearly $f_i\in\K(A)$.
	Considering that $f_i(\alpha)=0$ whenever $Q(\alpha)=0$, and $f_i(\alpha)=\frac{g_i(\alpha)}{Q(\alpha)}$ whenever $Q(\alpha)\not=0$, then $f_i$ is now defined on $\Sp_{c} A$.
	
	\smallskip
	
	We show now that $f_i\in\K^0(\Sp_{c} A)$ and we are left to prove it is continuous on $\Sp_c A$. Let $\beta\to \alpha$ be a specialization in $\Sp_{c}(A)$.
	
	The case $Q(\beta)=0$ is trivial. Indeed, $Q(\alpha)=0$ since $\beta$ specializes to $\alpha$ and in that case we have set $f_i(\beta)=0$ and $f_i(\alpha)=0$. Then, obviously $f_i(\beta)\in W_{\beta\alpha}$ and $\lambda_{\beta\alpha}(f_i(\beta))=f_i(\alpha)$.
	
	Let us assume in the following that $Q(\beta)\not=0$. In that case, we have set $f_i(\beta)=\frac{g_i(\beta)}{Q(\beta)}$. Since $W_{\beta\alpha}$ is $\beta$-convex and $f(\beta)=\sum f_i^2(\beta)$ is in $W_{\beta\alpha}$, one gets that $f_i^2(\beta)\in W_{\beta\alpha}$. Using that a valuation ring is integrally closed, one has  $f_i(\beta)\in W_{\beta\alpha}$, the first desired condition. Let us show now the second condition : $\lambda_{\beta\alpha}(f_i(\beta))=f_i(\alpha)$.
	
	First case: we assume $Q(\alpha)\not=0$. Since $Qf_i\in A$, one has $Q(\beta)f_i(\beta)\in W_{\beta\alpha}$, and
	$\lambda_{\beta\alpha}(Q(\beta)f_i(\beta))=Q(\alpha)f_i(\alpha)$
	and hence 
	$$\lambda_{\beta\alpha}(Q(\beta))\lambda_{\beta\alpha}(f_i(\beta))=Q(\alpha)f_i(\alpha)$$
	Since, $\lambda_{\beta\alpha}(Q(\beta))=Q(\alpha)\not=0$, one gets the desired condition
	$\lambda_{\beta\alpha}(f_i(\beta))=f_i(\alpha)$.
	
	Second case: assume that $Q(\alpha)=0$. Since $\Zi^{c}(Q)\subset \Zi^{c}(f)$, hence $f(\alpha)=0$. 
	Since $f=\sum f_i^2$, one gets 
	$$0=f(\alpha)=\lambda_{\beta\alpha}(f(\beta))=
	\sum \lambda_{\beta\alpha}(f_i(\beta))^2	$$
	This shows that, for any $i$,
	$\lambda_{\beta\alpha}(f_i(\beta))=0$, which gives $\lambda_{\beta\alpha}(f_i(\beta))=f_i(\alpha)$ and we have proved that (2) implies (3).
	
	(3) implies (1).
	Assume $f\in\C^0$. We set $q=f\in\C^0$ and $p=0$ and we get the identity of (1) namely $fq=p+f^{2m}$ for $m=1$.
\end{proof}

One sufficient condition to fit in the hypothesis of this proposition is to assume non negativity of our element $f$ on the whole real spectrum (not only on the central spectrum). Namely, one gets the following version of a central continuous Hilbert 17th property :

\begin{thm}\label{continuousHilbert}
	Let $f\in A$. If $f\geq 0$ on 
	$\Sp_r A$, then $f\in\C^0$.	
\end{thm}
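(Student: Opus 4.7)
The plan is to combine the classical Positivstellensatz over the whole real spectrum with Proposition \ref{centralHilbert17continu}. More precisely, the hypothesis $f\geq 0$ on $\Sp_r A$ is much stronger than $f\geq 0$ on $\Sp_c A$, and it is the former that yields an identity with coefficients in $\sum A^2$ (not merely in $\C$), which is precisely what one needs in order to stay inside the continuity class $\C^0$.

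First, I would apply the formal Positivstellensatz of Theorem \ref{FormalPSS} with $H=\{-f\}$, monoid $M$ generated by $f$, and trivial ideal. The absence of $\alpha\in\Sp_rA$ satisfying $-f\in P_\alpha$ and $f\notin\supp(\alpha)$ is exactly the condition $f\geq 0$ on $\Sp_rA$, so the Positivstellensatz produces an identity
$$p_1 - f\,p_2 + f^{2m} = 0$$
with $p_1\in\sum A^2$, $p_2\in\sum A^2$ (after expanding $\sum A^2[-f]=\sum A^2-f\sum A^2$). Setting $p=p_1$ and $q=p_2$, this reads $fq=p+f^{2m}$ with $p,q\in\sum A^2$.

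Second, I would verify the elementary inclusion $\sum A^2\subset\C^0$. Any $a\in A$ defines an abstract semialgebraic function on $\Sp_c A$ by $\alpha\mapsto a(\alpha)$; since for every specialization $\beta\to\alpha$ the associated place $\lambda_{\beta\alpha}$ sends $a(\beta)$ to $a(\alpha)$ by the very definition of the place of a valuation ring associated to a specialization of orderings, this function is continuous, hence $A\subset\K^0(\Sp_c A)$. In particular $\sum A^2\subset A\cap\sum\K^0(\Sp_c A)^2=\C^0$.

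Finally, both $p$ and $q$ lie in $\C^0$, so condition (1) of Proposition \ref{centralHilbert17continu} is satisfied for $f$. Applying the implication (1)$\Rightarrow$(3) of that proposition yields $f\in\C^0$, as required. There is no real obstacle in this argument: all the technical work (the continuity of the $f_i$ in a decomposition $f=\sum f_i^2$ with denominator of controlled vanishing locus) has already been carried out in Proposition \ref{centralHilbert17continu}, and the only new input is the stronger positivity hypothesis which forces the certifying $p,q$ to come from $\sum A^2$ rather than merely from $\C$.
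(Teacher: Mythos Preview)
Your proof is correct and follows exactly the same approach as the paper: apply the formal Positivstellensatz to obtain $fq=p+f^{2m}$ with $p,q\in\sum A^2\subset\C^0$, then invoke Proposition \ref{centralHilbert17continu}. You simply give more detail on the setup for Theorem \ref{FormalPSS} and on the inclusion $\sum A^2\subset\C^0$, both of which the paper leaves implicit.
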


\begin{proof}
Assume $f\geq 0$ on $\Sp_r A$. By the formal positivstellensatz we get an identity $fq=p+f^{2m}$ with $p,q\in\sum A^2\subset \C^0$. We conclude using Proposition \ref{centralHilbert17continu}.
\end{proof}

Using the Artin-Lang property, one may derive 
a geometric version of Theorem \ref{continuousHilbert} (left to the reader) and of Proposition \ref{centralHilbert17continu}, namely: 
\begin{prop} \label{geomcentralHilbert17continu}
	Let $V$ be an irreducible affine algebraic variety over $R$ with $V_{reg}(R)\not=\emptyset$. Let $f\in R[V]$. The following properties are equivalent:
	\begin{enumerate}
		\item There exist $p,q$ in $R[V]\cap\sum \K^0(\Cent V(R))^2$ such that $fq=p+f^{2m}$.
		\item There exist $P,Q\in R[V]\cap\sum \K^0(\Cent V(R))^2$ such that $Q^2f=P$ and $\Zi(Q)\cap \Cent V(R)\subset\Zi(f)\cap \Cent V(R)$.
		\item $f\in R[V]\cap\sum \K^0(\Cent V(R))^2$.
	\end{enumerate}
\end{prop}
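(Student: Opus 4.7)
The plan is to mirror the proof of Proposition \ref{centralHilbert17continu}, transferring the argument from the abstract central spectrum $\Sp_c R[V]$ to the geometric central locus $\Cent V(R)$. Writing $E:=R[V]\cap \sum \K^0(\Cent V(R))^2$ for brevity, the implication (3) $\Rightarrow$ (1) is immediate by taking $q=f$, $p=0$, $m=1$. For (1) $\Rightarrow$ (2) we may assume $f\neq 0$ (the case $f=0$ being trivial); multiplying $fq=p+f^{2m}$ by $f(p+f^{2m})$ yields $f(p+f^{2m})^2=f^2q(p+f^{2m})$. Setting $Q=p+f^{2m}$ and $P=f^2q(p+f^{2m})$, we get $Q^2f=P$ with $P,Q\in E$ since $E$ is a cone containing all squares and closed under sums and products. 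The inclusion $\Zi(Q)\cap\Cent V(R)\subset \Zi(f)\cap\Cent V(R)$ follows from the fact that elements of $E$ are pointwise non-negative on $\Cent V(R)$: for $x\in\Cent V(R)$ with $Q(x)=0$, both $p(x)\geq 0$ and $f(x)^{2m}\geq 0$ sum to zero, hence $f(x)=0$.

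The heart of the proof is (2) $\Rightarrow$ (3). Writing $P=\sum_{i=1}^r g_i^2$ with $g_i\in\K^0(\Cent V(R))$, define $f_i\colon \Cent V(R)\to R$ by $f_i=g_i/Q$ where $Q\neq 0$ and $f_i=0$ on $\Zi(Q)\cap\Cent V(R)$. Each $f_i$ is the restriction to $\Cent V(R)$ of a rational function on $V$, so the point is to verify continuity at a point $x_0\in\Zi(Q)\cap\Cent V(R)$. By hypothesis $f(x_0)=0$, and on the open subset $\{Q\neq 0\}\subset\Cent V(R)$ the identity $Q^2f=P$ gives $f=\sum_j f_j^2$, so $f_i^2\leq f$ pointwise there. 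Since $R$ is real closed, non-negative elements admit square roots, hence $|f_i(x)|\leq\sqrt{f(x)}$ whenever $Q(x)\neq 0$; since $f$ is a polynomial, hence Euclidean-continuous on $\Cent V(R)$, we have $f(x)\to 0$ as $x\to x_0$ and therefore $f_i(x)\to 0=f_i(x_0)$. This extends $f_i$ continuously to $\Cent V(R)$, so $f_i\in\K^0(\Cent V(R))$ and $f=\sum f_i^2\in E$.

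The main technical obstacle is the $\sqrt{f}$-estimate giving continuity at the zero set of $Q$ over an arbitrary real closed field $R$; this is a standard semialgebraic fact, either checked directly in the order topology on $R$ using $\epsilon$-$\delta$ arguments or transferred from the case $R=\RR$ via Tarski--Seidenberg. As an alternative route, one may deduce the geometric statement from the abstract Proposition \ref{centralHilbert17continu} applied to $A=R[V]$, using the Artin--Lang principle together with the identification $\widetilde{\Cent V(R)}=\Sp_c R[V]$ from Proposition \ref{alg+geomcentrality} to pass between the two settings; the assumption $V_{reg}(R)\not=\emptyset$ ensures that $\Cent V(R)$ is non-empty so that this correspondence and the cone $E$ are meaningful.
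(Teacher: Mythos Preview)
Your proof is correct. The paper itself does not give a detailed argument here: it simply says that the geometric statement is derived from Proposition~\ref{centralHilbert17continu} ``using the Artin--Lang property'' and leaves the details to the reader. Your write-up both mentions this route and, as its main line, carries out directly on $\Cent V(R)$ the same chain $(3)\Rightarrow(1)\Rightarrow(2)\Rightarrow(3)$ that the paper performs abstractly on $\Sp_c A$.

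The difference is thus one of packaging rather than of substance. The paper's intended route transfers the abstract result via the identification $\widetilde{\Cent V(R)}=\Sp_c R[V]$, which implicitly requires matching $R[V]\cap\sum\K^0(\Sp_c R[V])^2$ with $R[V]\cap\sum\K^0(\Cent V(R))^2$; this is standard but not entirely content-free. Your direct argument sidesteps this identification and replaces the valuation-theoretic continuity check (places $\lambda_{\beta\alpha}$ along specializations) by the elementary estimate $|f_i|\le\sqrt{f}$ in the order topology of $R$, which is arguably more transparent in the geometric setting. Both approaches ultimately rest on the same computation $Q=p+f^{2m}$, $P=f^2q(p+f^{2m})$ and the same use of $\Zi(Q)\cap\Cent V(R)\subset\Zi(f)\cap\Cent V(R)$ to force $f_i\to 0$ at the bad points.
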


Let $V$ be the Cartan umbrella of coordinate ring $\R[V]=\R[x,y,z]/(x^3 -z(x^2 + y^2))$ and 
$f= x^2 + y^2 - z^2$. As already discussed in Example \ref{CartanEx}, there is $Q=x^2+y^2\in \C^0$ and $P=3x^4y^2+3x^2y^4+y^6\in \C^0$ such that $Q^2f=P$. Since, $\Zi(Q)\cap \Cent V(\R)\subset \Zi(f)\cap \Cent V(\R)$ one gets that $f\in\sum\K^0(\Cent V(\R))^2$.

Note that $f$ is not nonnegative on all $V(\RR)$ which says that the given version of central Hilbert 17th property as Theorem \ref{continuousHilbert} shall be refined.


\end{document}